\numberwithin{equation}{section}
\newcommand{\hsp}[1]{{\hbox{\hspace{#1}}}}
\newcommand{\mystack}[2]{\ensuremath{ \substack{ \hbox{\scriptsize{${#1}$}} \\ \hbox{\scriptsize{${#2}$}} }} }
\def\a{\alpha}  
\def\b{\beta}  
\def\c{\gamma}  
\def\d{\delta}  
\def\e{\varepsilon}  
\def\z{\zeta}
\def\m{\mu}
\def\n{\nu}
\def\w{\omega} 
\def\x{\xi}
\def\fa{\mathfrak{a}} \def\sfa{\mathsf{a}}
\def\tAd{\mathrm{Ad}} \def\tad{\mathrm{ad}}
\def\cB{\mathcal B}
\def\fb{\mathfrak{b}} 
\def\bC{\mathbb C}
 \def\tdim{\mathrm{dim}}
\def\tEnd{\mathrm{End}}
\def\texp{\mathrm{exp}}
\def\cF{\mathcal F}
  \def\ttF{\mathtt{F}}
\def\tGr{\mathrm{Gr}}
\def\fg{{\mathfrak{g}}}
\def\cH{\mathcal H} 
\def\sfH{\mathsf{H}}
\def\fh{\mathfrak{h}}
\def\tti{\mathtt{i}} 
\def\bI{\mathbf{I}}
\def\tId{\mathrm{Id}}
 \def\ttj{\mathtt{j}} 
\def\ttJ{\mathtt{J}}
 \def\ttk{\mathtt{k}}
\def\tLG{\mathrm{LG}}
\def\cM{\mathcal M}
\def\fn{\mathfrak{n}}
 \def\cO{\mathcal O}
\def\bP{\mathbb P}
\def\fp{\mathfrak{p}} 
\def\sfp{\mathsf{p}}
 \def\cQ{\mathcal Q} 
\def\fq{\mathfrak{q}} 
\def\sfq{\mathsf{q}} 
 \def\cR{\mathcal R}
\def\sfr{\mathsf{r}}
 \def\cS{\mathcal S}
\def\fs{\mathfrak{s}}
\def\tSL{\mathrm{SL}} \def\tSO{\mathrm{SO}}
\def\tSp{\mathrm{Sp}} \def\tSpin{\mathit{Spin}}
 \def\tSing{\mathrm{Sing}}
\def\cY{\mathcal Y} \def\bZ{\mathbb Z}
\def\half{\tfrac{1}{2}}
\def\dfn{\stackrel{\hbox{\tiny{dfn}}}{=}}
\def\sbullet{{\hbox{\tiny{$\bullet$}}}}
\def\inj{\hookrightarrow}
\def\op{\oplus}
\def\ot{\otimes}
\def\tw{\hbox{\small $\bigwedge$}}
\newcounter{numcnt}
\newcounter{cnt}
\newcounter{acnt}
\newenvironment{a_list}{ % modest indent a.) , etc.
  \begin{list}{{(\alph{acnt})}}
   {\usecounter{acnt} \setlength{\itemsep}{3pt}
    \setlength{\leftmargin}{25pt} \setlength{\labelwidth}{20pt} }
   }
   {\end{list}}
\newcounter{Acnt}
\newcounter{icnt}
\newenvironment{i_list}{ % modest indent a.) , etc.
  \begin{list}{{(\roman{icnt})}}
   {\usecounter{icnt} \setlength{\itemsep}{3pt}
    \setlength{\leftmargin}{25pt} \setlength{\labelwidth}{20pt} }
   }
   {\end{list}}
\newcounter{Icnt}
\newcounter{exam_cnt}
\newcounter{mccnt}
\newenvironment{circlist}{ % modest indent a.) , etc.
  \begin{list}{$\circ$}
   {\usecounter{cnt} \setlength{\itemsep}{2pt}
    \setlength{\leftmargin}{15pt} \setlength{\labelwidth}{20pt} }
   }
   {\end{list}}
\newenvironment{bcirclist}{ % modest indent a.) , etc.
  \begin{list}{\boldmath$\circ$\unboldmath}
   {\usecounter{cnt} \setlength{\itemsep}{2pt}
    \setlength{\leftmargin}{15pt} \setlength{\labelwidth}{20pt} }
   }
   {\end{list}}
\newtheorem{corollary}[equation]{Corollary}
\newtheorem*{corollary*}{Corollary}
\newtheorem{lemma}[equation]{Lemma}
\newtheorem*{lemma*}{Lemma}
\newtheorem{proposition}[equation]{Proposition}
\newtheorem*{proposition*}{Proposition}
\newtheorem{theorem}[equation]{Theorem}
\newtheorem*{theorem*}{Theorem}
\theoremstyle{definition}
\newtheorem*{question*}{Question}          % Delete when done with IC2
\newtheorem*{boldQ*}{Question}
\theoremstyle{remark}
\newtheorem*{assume*}{Assume}
\newtheorem*{answer*}{Answer}
\newtheorem*{claim*}{Claim}
\newtheorem{definition}[equation]{Definition}
\newtheorem*{definition*}{Definition}
\newtheorem{example}[equation]{Example}
\newtheorem*{example*}{Example}
\newtheorem*{hint*}{Hint}
\newtheorem*{notation*}{Notation}
\newtheorem{remark}[equation]{Remark}
\newtheorem*{remark*}{Remark}
\newtheorem*{remarks*}{Remarks}
\newtheorem*{fact*}{Fact}
\newtheorem*{emphL*}{Lemma}
\newtheorem*{emphQ*}{Question}
\numberwithin{HWeq}{section}
\theoremstyle{definition}
\begin{document}
\title{Schur flexibility of cominuscule Schubert varieties}
\author[Robles]{C. Robles}
\email{robles@math.tamu.edu}
\address{Mathematics Department, Mail-stop 3368, Texas A\&M University, College Station, TX  77843-3368} 
\thanks{Robles is partially supported by NSF DMS-1006353.}
\date{\today}
\begin{abstract}
Let $X=G/P$ be a cominuscule rational homogeneous variety.  (Equivalently, $X$ admits the structure of a compact Hermitian symmetric space.)  We say a Schubert class $\xi$ is Schur rigid if the only irreducible subvarieties $Y \subset X$ with homology class $[Y] \in \bZ \xi$ are Schubert varieties.  Robles and The identified a sufficient condition for $\xi$ to be Schur rigid.  In this paper we show that the condition is also necessary.
\end{abstract}
\keywords{Rational homogeneous variety, cominuscule, compact Hermitian symmetric space, Schubert class, Schubert variety, Schur rigidity}
\subjclass[2010]
{%53A55, % Differential Invariants
 53C24, % Rigidity results
 53C30, % Homogeneous manifolds
 58A15, % EDS (Cartan Thy)
 58A17, % Pfaffian systems
 %14N15  % AG : Classical problems, Schubert calculus
 14M15, % AG: Grassmannians, Schubert varieties, flag manifolds
 14M17, % AG: Homogeneous spaces
}
\maketitle

\setcounter{tocdepth}{1}

%------------------------------------------------------------------------------
%------------------------------------------------------------------------------
\section{Introduction}
%------------------------------------------------------------------------------

Let $X=G/P$ be a rational homogeneous variety.  The Schubert classes form an additive basis of the integral homology $H_\bullet(X,\bZ)$.    In 1961, Borel and Haefliger \cite{MR0149503} asked: given a Schubert class $\xi_w$ represented by a Schubert variety $Y_w$, aside from the $G$--translates $g\cdot Y_w$, are there any other algebraic representatives of the Schubert class?  In some cases, the Schubert varieties are the only algebraic representatives, and we then say the Schubert class is \emph{rigid}.  This problem has been studied by several people, including R. Hartshorne, E. Rees and E. Thomas \cite{MR0357402}, R. Bryant \cite{SchurRigid}, M. Walters \cite{Walters}, J. Hong \cite{MR2191767, MR2276624}, C. Robles and D. The \cite{MR2960030} and I. Coskun \cite{MR2819546, coskunOrth}.

It is a striking consequence of B. Kostant's work \cite{MR0142696, MR0142697} that, \emph{when $X$ is cominuscule},\footnote{The Grassmannian $\tGr(k,n)$ of complex $k$--planes in $\bC^n$ is an example of a cominuscule rational homogeneous variety (via the Pl\"ucker embedding).} the varieties $Y \subset X$ that are homologous to an integer multiple of the Schubert class $\xi_w$ (that is, $[Y] \in\bZ \xi_w$) are characterized by a system of differential equations known as the \emph{Schur system}, cf. Section \ref{S:DS}.  The Schubert varieties $\{ g \cdot Y_w \ | \ g \in G \}$ are the \emph{trivial solutions}.  When there exist no nontrivial, irreducible solutions, we say that the Schubert class is \emph{Schur rigid}.  Every Schur rigid $\xi_w$ is rigid, and this provides a differential-geometric approach to the algebro-topological question above.

Associated to the differential system is a Lie algebra cohomology.  The cohomology contains a distinguished subspace $\cO_w$.  It is known \cite{MR2960030} that the Schubert class $\xi_w$ is Schur rigid when $\cO_w = 0$; we say that the space $\cO_w$ is a \emph{cohomological obstruction to rigidity}.  The goal of this paper is to show the converse:  when $\cO_w$ is nontrivial, there exist nontrivial solutions $Y$.  Precisely, $Y$ is an irreducible subvariety of $X$ such that (i) $[Y] \in \bZ \xi_w$, and (ii) $Y$ is not a $G$--translate of $Y_w$; this is Corollary \ref{C:SchurFlex}.

From the perspective exterior differential systems (EDS) this construction of non-trivial solutions is very interesting.  It is a relatively common practice to establish rigidity through the machinery of EDS.  In contrast, it is often considerably more difficult to construct non-trivial solutions.  (A striking example is R. Bryant's construction of metrics with exceptional holonomy $G_2$ and $\tSpin(7)$ in \cite{MR916718}.)  Moreover, the construction is typically local.  So, what is particularly interesting here (and I expect will have applications to other problems) is the use of Lie algebra cohomology to construct nontrivial, global (they are algebraic varieties) solutions.

While the construction of the nontrivial solutions $Y$ (in the proof of Theorem \ref{T:SchubertFlex}) is explicit, it is in terms of representation theoretic data, and as a result is not geometrically transparent.  Geometric (and explicit) constructions of $Y$ are given in \cite{CR1}, where a stronger (than Corollary \ref{C:SchurFlex}) statement is proven:  \emph{if $\cO_w \not=0$, then for \emph{any} positive integer $m$ there exists an irreducible variety $Y$ representing $m\xi_w$}.

%------------------------------------------------------------------------------
\subsection{History}
%------------------------------------------------------------------------------
Schur rigidity and the associated differential system were first studied independently by M. Walters \cite{Walters} and R. Bryant \cite{SchurRigid}.  Walters identified Schur rigid classes represented by (a) smooth Schubert varieties in $X = \tGr(k,n)$; and (b) codimension two Schubert varieties in $X = \tGr(2,n)$.  Bryant identified Schur rigid classes represented by (a) smooth Schubert varieties in the cases that $X$ is the Grassmannian $\tGr(k,n)$ or the Lagrangian Grassmannian $\tLG(n,2n)$; (b) the maximal linear subspaces in the classical cominuscule $X$; and (c) singular Schubert varieties of low (co)dimension in $\tGr(k,n)$.    In the case that $Y_w$ is smooth, the results of Bryant and Walters, which were obtained case-by-case, were generalized to arbitrary cominuscule $X$ and given a beautiful, uniform proof by J. Hong \cite{MR2276624}.  There, the obstructions to rigidity are realized as a (Lie algebra) cohomological condition.  Hong also showed that a large class of the singular Schubert varieties in the Grassmannian are Schur rigid \cite{MR2191767}.  C. Robles and D. The \cite{MR2960030} extended the approach of \cite{MR2276624} to give a complete list of the Schubert classes for which there exist no cohomological obstructions to rigidity: these classes are necessarily Schur rigid.  

Several people have worked on the more restrictive problem of determining when $\xi_w$ is rigid, as well as the related problem of determining when $\xi_w$ admits a smooth representative; see \cite{MR2819546, coskunOrth, MR0357402, MR0224611, MR0265371, MR0285535}.

%------------------------------------------------------------------------------
\subsection{Acknowledgements}
%------------------------------------------------------------------------------
Over the course of this project, I have benefitted from conversations and/or correspondence with many people, including R. Bryant, I. Coskun, J.M. Landsberg, N. Ressayre, F. Sottile and D. The.  I thank them for their insights and time.  I am especially grateful to the anonymous referees for several ameliorating suggestions.

%------------------------------------------------------------------------------
\subsection{Contents}
%------------------------------------------------------------------------------
The main result of this paper is Theorem \ref{T:SchubertFlex}.  The theorem yields Corollary \ref{C:SchurFlex}, which is the result discussed above: if $\cO_w \not=0$, then the Schubert class $\xi_w$ is Schur flexible; that is, there exists an irreducible variety $Y$, which is \emph{not} $G$--equivalent to $Y_w$, such that $[Y] \in \bZ\xi_w$.  As a corollary, we find that a Schubert class is Schur rigid if and only if its Poincar\'e dual is Schur rigid (Corollary \ref{C:PD}).  

In Section \ref{S:statements}, we apply Corollary \ref{C:SchurFlex} to each of the classical (irreducible) cominuscule varieties in order to enumerate their Schur flexible classes (Theorems \ref{T:quadrics}, \ref{T:Gr}, \ref{T:LG} and \ref{T:Spinor}).  In the case of the Lagrangian Grassmannian $\tLG(n,2n) = C_n/P_n$ and the Spinor variety $\cS_{n+1} = D_{n+1}/P_{n+1}$ we obtain the following corollary: There is an bijection between the set of Schubert classes in $\tLG(n,2n)$, and the set of Schubert classes in $\cS_{n+1}$.  (This bijection preserves the partial order on the Hasse posets parameterizing the Schubert classes.)  By Corollary \ref{C:LGspin}, a Schubert class in $\tLG(n,2n)$ is Schur rigid if and only if the corresponding class in $\cS_{n+1}$ is Schur rigid.

The Schur rigid classes of the two exceptional (irreducible) cominuscule varieties are given in Figures \ref{F:E6} and \ref{F:E7}.

\tableofcontents\listoffigures\listoftables

%------------------------------------------------------------------------------
\section{Rigid classes in the irreducible {$G/P$}}
\label{S:statements}
%------------------------------------------------------------------------------

\noindent The irreducible, cominuscule rational homogeneous varieties $X = G/P$ are:
\begin{bcirclist}
\item the Grassmannian $\tGr(k,\bC^n) \simeq \tSL_n(\bC)/P_k$ of $k$--planes in $\bC^n$;
\item the smooth quadric hypersurface $\cQ^m \subset \bP^{m+1} \simeq \tSO(m+2)/P_1$;
\item the Lagrangian Grassmannian $\tLG(n,\bC^{2n}) = \tSp_{2n}(\bC)/P_n$;
\item the Spinor variety $\cS \simeq \tSpin_{2n}(\bC)/P_n$;
\item the Cayley plane $E_6/P_6$, and the Freudenthal variety $E_7/P_7$.
\end{bcirclist}
The Grassmannians, quadrics, Lagrangian Grassmannians and Spinors, are the \emph{classical} $X$; the Cayley plane and Freudenthal variety are the \emph{exceptional} $X$.  

Theorems \ref{T:Gr}--\ref{T:Spinor} list the Schur rigid classes for each of the classical $X$ above.  These theorems are proved as follows.  By Corollary \ref{C:SchurFlex}, a Schubert class $\xi_w \in H_\sbullet(X,\bZ)$ is Schur rigid if and only if $\cO_w=0$; equivalently, the condition $\sfH_+$ (Definition \ref{D:H1H2}) is satisfied.  For each of the irreducible, cominuscule $X$, the Schubert classes satisfying the condition $\sfH_+$ are given by \cite[Theorem 6.1]{MR2960030}.  There, the Schubert classes are described in terms of representation theoretic data, cf. Section \ref{S:aJ}.  This representation theoretic description is well-suited to the proof of Theorem \ref{T:SchubertFlex}, but is not geometrically transparent.  The appendices give the translations between the representation theoretic descriptions and the familiar geometric descriptions for the classical $X$; see, in particular, Lemmas \ref{L:part}, \ref{L:Cpart} and \ref{L:Dpart}.  These translations, applied to Corollary \ref{C:SchurFlex} and \cite[Theorem 6.1]{MR2960030}, yield the geometric descriptions of the Schur rigid classes given by Theorems \ref{T:Gr}--\ref{T:Spinor}.    Throughout, 
$$
  o \ = \ P/P \ \in \ G/P \ = \ X \,.
$$

In the case of the two exceptional $X$, the Schubert classes satisfying the condition $\sfH_+$ (equivalently, by Corollary \ref{C:SchurFlex}, the Schur rigid classes) are given by \cite[Tables 4 and 5]{MR2960030}.  This data is transcribed to Figures \ref{F:E6} and \ref{F:E7} (pages \pageref{F:E6} and \pageref{F:E7}).  

\subsection*{Key to Figures \ref{F:E6} and \ref{F:E7}} \label{S:key} 
These figures are respectively the Hasse posets of the Cayley plane $E_6/P_6$ and Freudenthal variety $E_7/P_7$.  Each node represents a Schubert class $\xi_w$ and is labeled with degree of $Y_w$ in the minimal homogeneous embeddings $E_6/P_6 \inj \bP V_{\w_6}$ and $E_7/P_7 \inj \bP V_{\w_7}$, cf. \cite[Section 4.8]{MR1782635}.  (In fact, the Cayley plane and Freudenthal variety are also minuscule.  So \cite[Remark 4.8.4]{MR1782635} may be used to compute the degree.)  The height of the node indicates the dimension of $\xi_w$; in particular, the lowest node $o \in X$ is at height zero.  Two nodes are connected if the Schubert variety associated with the lower node is a divisor of the Schubert variety associated with the higher node.  The node is circled if the corresponding Schubert class is Schur rigid.  

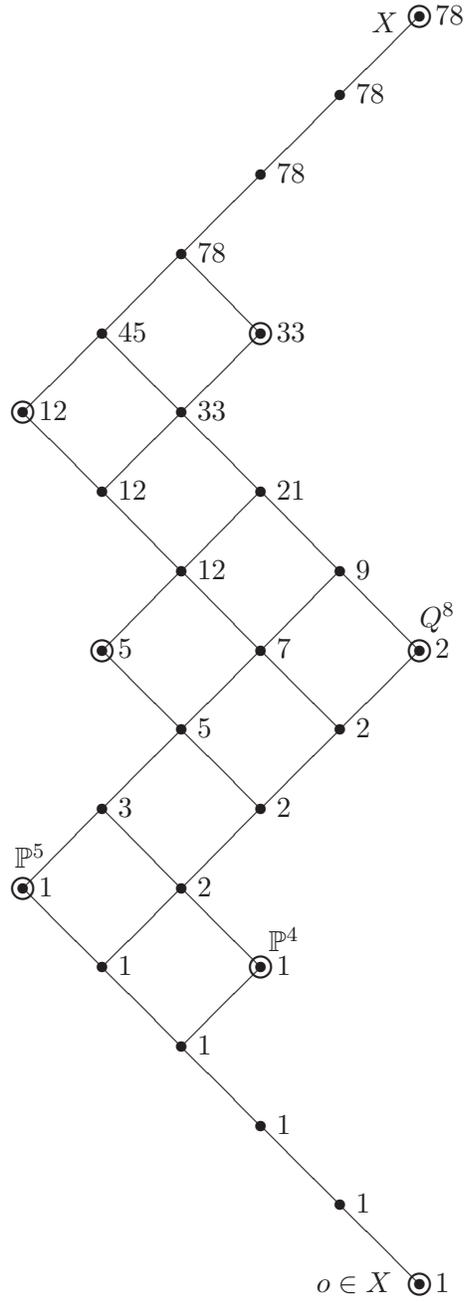
\begin{figure}[p]
\caption[Hasse poset of $E_6/P_6$]{Hasse poset of the Cayley plane $E_6/P_6$, cf. page \pageref{S:key}.}
\setlength{\unitlength}{30pt}
\begin{picture}(5,17)
% lines of Hasse poset
\put(5,0){\line(-1,1){5}}\put(3,4){\line(-1,1){2}}
\put(3,6){\line(-1,1){2}}\put(4,7){\line(-1,1){4}}
\put(5,8){\line(-1,1){4}}\put(3,12){\line(-1,1){1}}
\put(0,11){\line(1,1){5}}\put(1,10){\line(1,1){2}}
\put(1,8){\line(1,1){2}}\put(0,5){\line(1,1){4}}
\put(1,4){\line(1,1){4}}\put(2,3){\line(1,1){1}}
% nodes of Hasse poset
\multiput(5,0)(-1,1){6}{\circle*{0.15}}
\multiput(3,4)(-1,1){3}{\circle*{0.15}}
\multiput(3,6)(-1,1){3}{\circle*{0.15}}
\multiput(4,7)(-1,1){5}{\circle*{0.15}}
\multiput(5,8)(-1,1){5}{\circle*{0.15}}
\multiput(3,12)(-1,1){2}{\circle*{0.15}}
\multiput(5,16)(-1,-1){3}{\circle*{0.15}}
% cohomological obstructions
\thicklines
\put(3,4){\circle{0.25}} \put(0,5){\circle{0.25}}
\put(1,8){\circle{0.25}} \put(5,8){\circle{0.25}}
\put(0,11){\circle{0.25}} \put(3,12){\circle{0.25}}
\put(5,0){\circle{0.25}} \put(5,16){\circle{0.256}}
\thinlines
% degree of Schubert variety
\put(4.2,14.9){78} \put(3.2,13.9){78} \put(2.2,12.9){78}
\put(3.2,11.9){33} \put(1.2,11.9){45}
\put(0.2,10.9){12} 
\put(2.2,10.9){33}\put(1.2,9.9){12} 
\put(3.2,9.9){21} \put(2.2,8.9){12} \put(4.2,8.9){9}
\put(1.2,7.9){5} \put(3.2,7.9){7} 
\put(5.2,7.9){2} \put(5.0,8.3){$Q^8$} % smooth
\put(2.2,6.9){5} \put(4.2,6.9){2}
\put(1.2,5.9){3} \put(3.2,5.9){2}
\put(0.2,4.9){1} \put(-0.1,5.25){$\bP^5$} %smooth
\put(2.2,4.9){2}
\put(1.2,3.9){1} \put(3.2,3.9){1} \put(3.1,4.2){$\bP^4$} %smooth
\put(2.2,2.9){1} \put(3.2,1.9){1} \put(4.2,0.9){1}
\put(5.2,-0.1){1} \put(5.2,15.9){78}
\normalsize
\put(3.7,-0.1){$o \in X$} \put(4.4,15.8){$X$}
\end{picture}
\label{F:E6}
%\addcontentsline{toc}{section}{Figure \ref{F:E6}: Hasse poset for the Cayley plane $E_6/P_6$}
\end{figure}

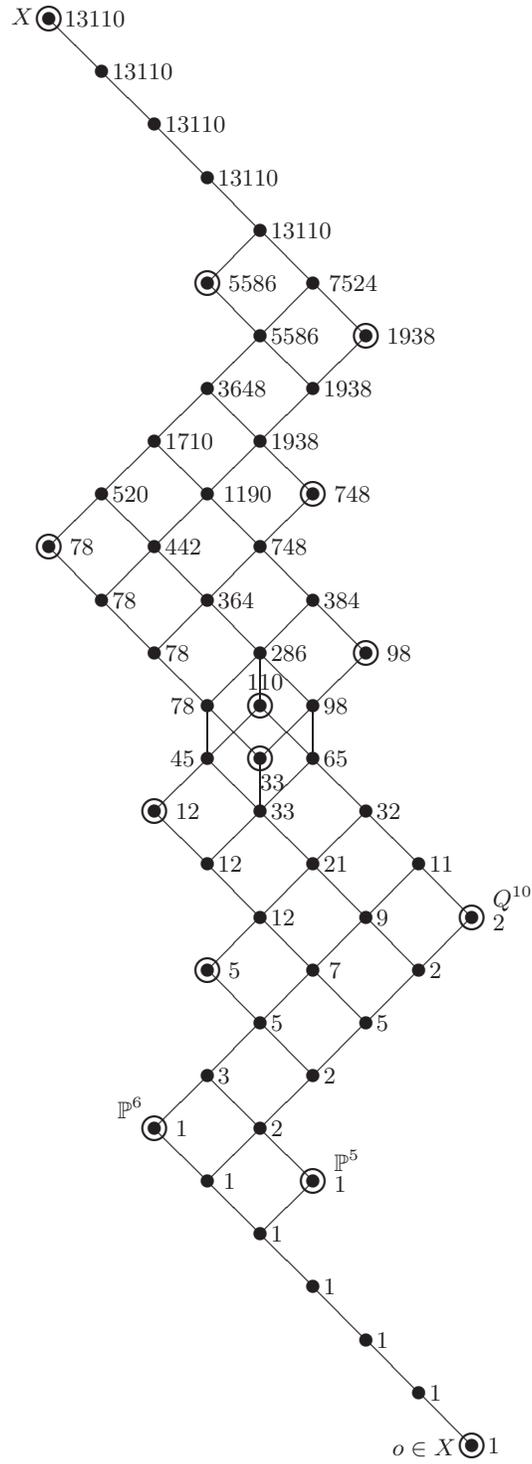
\begin{figure}[p]
\caption[Hasse poset of $E_7/P_7$]{Hasse poset of the Freudenthal variety $E_7/P_7$, cf. page \pageref{S:key}.}
\setlength{\unitlength}{20pt}
\begin{picture}(8,28)
% lines of Hasse poset
\put(2,6){\line(1,-1){6}} \put(3,7){\line(1,-1){2}}
\put(3,9){\line(1,-1){2}} \put(2,12){\line(1,-1){4}}
\put(3,13){\line(1,-1){4}} \put(4,14){\line(1,-1){4}}
\put(0,17){\line(1,-1){4}} \put(1,18){\line(1,-1){4}}
\put(2,19){\line(1,-1){4}} \put(3,20){\line(1,-1){2}}
\put(3,22){\line(1,-1){2}} \put(0,27){\line(1,-1){6}}
\put(4,4){\line(1,1){1}} \put(3,5){\line(1,1){5}}
\put(2,6){\line(1,1){5}} \put(3,9){\line(1,1){3}}
\put(3,11){\line(1,1){2}} \put(2,12){\line(1,1){2}}
\put(4,13){\line(1,1){2}} \put(3,14){\line(1,1){2}}
\put(4,12){\line(0,1){1}} \put(4,14){\line(0,1){1}}
\put(3,13){\line(0,1){1}} \put(5,13){\line(0,1){1}}
\put(2,15){\line(1,1){3}} \put(1,16){\line(1,1){5}}
\put(0,17){\line(1,1){5}} \put(3,22){\line(1,1){1}}
% nodes of Hasse poset
\multiput(2,6)(1,-1){7}{\circle*{0.25}} \multiput(3,7)(1,-1){3}{\circle*{0.25}}
\multiput(3,9)(1,-1){3}{\circle*{0.25}} \multiput(2,12)(1,-1){5}{\circle*{0.25}}
\multiput(3,13)(1,-1){5}{\circle*{0.25}} \multiput(4,14)(1,-1){5}{\circle*{0.25}}
\multiput(0,17)(1,-1){5}{\circle*{0.25}} \multiput(1,18)(1,-1){5}{\circle*{0.25}}
\multiput(2,19)(1,-1){5}{\circle*{0.25}} \multiput(3,20)(1,-1){3}{\circle*{0.25}}
\multiput(3,22)(1,-1){3}{\circle*{0.25}} \multiput(0,27)(1,-1){7}{\circle*{0.25}}
% cohomological obstructions to Schur flexibility
\thicklines
\put(8,0){\circle{0.45}}
\put(5,5){\circle{0.45}} \put(2,6){\circle{0.45}} 
\put(3,9){\circle{0.45}} \put(8,10){\circle{0.45}} 
\put(2,12){\circle{0.45}} \put(4,13){\circle{0.45}} 
\put(4,14){\circle{0.45}} \put(6,15){\circle{0.45}} 
\put(0,17){\circle{0.45}} \put(6,21){\circle{0.45}} 
\put(3,22){\circle{0.45}} \put(5,18){\circle{0.45}} 
\put(0,27){\circle{0.45}}
\thinlines
% degree
\footnotesize
\put(6.5,-0.2){$o \in X$} \put(-0.7,26.9){$X$}
\put(5.4,5.2){$\bP^5$} \put(1.3,6.2){$\bP^6$}
\put(8.4,10.2){$Q^{10}$}
\put(8.3,-0.15){1}
\put(7.2,0.85){1} \put(6.2,1.85){1} \put(5.2,2.85){1}
\put(4.2,3.85){1} \put(5.4,4.8){1} \put(3.3,4.85){1}
\put(4.2,5.85){2} \put(2.4,5.85){1}
\put(5.2,6.85){2} \put(3.2,6.85){3}
\put(4.2,7.85){5} \put(6.2,7.85){5}
\put(3.4,8.85){5} \put(5.3,8.85){7} \put(7.2,8.85){2}
\put(4.2,9.85){12} \put(6.2,9.85){9} \put(8.4,9.75){2}
\put(3.2,10.85){12} \put(5.2,10.85){21} \put(7.2,10.85){11}
\put(2.4,11.85){12} \put(4.2,11.85){33} \put(6.2,11.85){32}
\put(2.3,12.85){45} \put(4.0,12.4){33} \put(5.2,12.85){65}
\put(2.3,13.85){78} \put(3.75,14.3){110} \put(5.2,13.85){98}
\put(2.2,14.85){78} \put(4.2,14.85){286} \put(6.4,14.85){98}
\put(1.2,15.85){78} \put(3.2,15.85){364} \put(5.2,15.85){384}
\put(0.4,16.85){78} 
\put(2.2,16.85){442} \put(4.2,16.85){748}
\put(1.2,17.85){520} \put(3.3,17.85){1190} \put(5.4,17.85){748}
\put(2.2,18.85){1710} \put(4.2,18.85){1938} 
\put(3.2,19.85){3648} \put(5.2,19.85){1938}
\put(4.2,20.85){5586} \put(6.4,20.85){1938}
\put(3.4,21.85){5586} \put(5.3,21.85){7524}
\put(4.2,22.85){13110} \put(3.2,23.85){13110} \put(2.2,24.85){13110}
\put(1.2,25.85){13110} \put(0.3,26.85){13110}
\normalsize
\end{picture}
\label{F:E7}
%\addcontentsline{toc}{section}{Figure \ref{F:E7}: Hasse poset for the Freudenthal variety $E_7/E_7$}
\end{figure}

%------------------------------------------------------------------------------
\subsection{Quadric hypersurfaces}
\label{S:sQ}
%------------------------------------------------------------------------------

If the quadric hypersurface $\cQ \subset \bP^{2m}$ is of odd dimension, then $[o]$ and $[\cQ]$ are the only Schur rigid Schubert classes.  This is seen by a simple topological argument, as follows.  It is well-known that $H_{2d}(\cQ,\bZ) = \bZ$ for all $0 \le d \le 2m-1$; that is, $\xi_w$ spans $H_{2|w|}(\cQ,\bZ)$ where, $|w| = \tdim_\bC \xi_w$.  (See \cite[p.139--140]{MR1782635} for an explicit description of the Schubert varieties; they are linear subspaces and degenerate quadrics.)  Therefore, if $Y \subset \cQ$ is any subvariety of dimension $d$ (for example, a general complete intersection), then $[Y] \in \bZ_{>0} \xi_w$, where $\xi_w$ is the unique Schubert class of dimension $d$.  Thus, $\xi_w$ is Schur flexible, for all $\tdim_\bC\,\xi_w \not=0,2m-1$.  

If the quadric $\cQ \subset \bP^{2m+1}$ is of even dimension, the again $H_{2d}(\cQ,\bZ) = \bZ$ for all $0 \le d \le 2m$, except $d = m$.  (See \cite[pp. 142--143]{MR1782635} for an explicit description of the Schubert varieties; again, they are linear subspaces and degenerate quadrics.)  As above, the Schubert class $\x_w$ is Schur flexible for all $\tdim_\bC \xi_w \not= 0,m,2m$.  If $d = m$, then $H_{2m}(\cQ,\bZ) = \bZ \op \bZ$, and the two spanning Schubert classes $\xi_w$ and $\xi_{w'}$ are each represented by a maximal linear space.  These two Schubert classes are known to be Schur rigid by the work of Hong.

In summary, we have the following.

\begin{theorem}[Schur rigidity in quadrics] \label{T:quadrics}
\begin{a_list}
\item If the quadric $\cQ\subset\bP^{2m}$ is of odd dimension, then the only Schur rigid Schubert classes are $[o]$ and $[X]$.
\item If the quadric $\cQ\subset\bP^{2m+1}$ is of even dimension, then the Schur rigid Schubert classes are $[o]$, $[X]$ and the two distinct classes $\xi_w, \xi_{w'} \in H_{2m}(\cQ,\bZ)$ represented by maximal linear subspaces.
\end{a_list}
\end{theorem}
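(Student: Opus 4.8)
The plan is to treat the two cases separately, in each reducing the statement about Schur rigidity of a Schubert class $\xi_w$ to the computation of the homology group $H_{2|w|}(\cQ,\bZ)$ together with an understanding of the minimal-degree algebraic cycles in that dimension. The key structural input is the observation recorded in the introduction: when $\cQ$ is cominuscule, an irreducible subvariety $Y$ with $[Y] \in \bZ\xi_w$ exists and is non-Schubert precisely when $\cO_w \neq 0$; but rather than unpacking the cohomological obstruction here, I would exploit the very special topology of quadrics to sidestep it. First I would recall from \cite[pp.~139--143]{MR1782635} the classical description of the Schubert varieties in $\cQ^n \subset \bP^{n+1}$: in each complex dimension $d$ they are linear $\bP^d \subset \cQ$ when $d$ is small, and degenerate quadric sections $\bP^{d+1} \cap \cQ$ when $d$ is large, and the Betti numbers are $b_{2d}(\cQ^n) = 1$ for all $0 \le d \le n$ except in the middle dimension $d = n/2$ (even-dimensional case), where $b_{2d} = 2$.

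For part (a), $\cQ = \cQ^{2m-1} \subset \bP^{2m}$ has no middle-dimensional anomaly, so $H_{2d}(\cQ,\bZ) \cong \bZ$ for every $0 \le d \le 2m-1$, generated by the unique Schubert class $\xi_w$ of that dimension. Then I would argue: for $0 < d < 2m-1$, take any smooth (or general) subvariety $Y \subset \cQ$ of pure dimension $d$ that is not a linear space or a degenerate quadric section — for instance a general complete intersection $Y = \cQ \cap H_1 \cap \cdots \cap H_{2m-1-d}$ of hyperplanes, which for generic $H_i$ is smooth, irreducible of dimension $d$, and of degree $2$ in $\bP^{2m}$. Its class lies in $H_{2d}(\cQ,\bZ) = \bZ\xi_w$, necessarily in $\bZ_{>0}\xi_w$ by positivity of effective cycles against ample classes, yet $Y$ is not a $G$-translate of $Y_w$ — one can see this on degrees (a linear Schubert variety has degree $1$) or, where $Y_w$ is itself a degenerate quadric section, on smoothness versus singularity, or simply on the fact that $G \cong \mathrm{SO}(2m+1)$ acts with finitely many orbits on the relevant Chow variety component and a generic complete intersection is not in the closure of the $G$-orbit of $Y_w$. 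Hence $\xi_w$ is Schur flexible for all $d \neq 0, 2m-1$. The classes $[o]$ (a point) and $[X]$ (all of $\cQ$) are trivially Schur rigid since a zero-dimensional or top-dimensional irreducible subvariety representing them must be a point or $\cQ$ itself.

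Part (b) is identical for $d \neq 0, m, 2m$: the same general-complete-intersection argument shows $\xi_w$ is Schur flexible. For the top and bottom classes rigidity is again trivial. The one genuinely nontrivial input is the middle dimension $d = m$ of $\cQ = \cQ^{2m} \subset \bP^{2m+1}$, where $H_{2m}(\cQ,\bZ) \cong \bZ \oplus \bZ$ is spanned by the two classes $\xi_w, \xi_{w'}$ of the two rulings by maximal linear $\bP^m$'s. Here I would invoke Hong's theorem (the smooth-Schubert-variety case of \cite{MR2276624}, building on \cite{SchurRigid, Walters}): these maximal linear subspaces are smooth Schubert varieties, and smooth Schubert varieties in cominuscule $X$ are Schur rigid — equivalently, $\cO_w = \cO_{w'} = 0$. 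This settles that $\xi_w$ and $\xi_{w'}$ are Schur rigid and completes the list.

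The main obstacle is the non-vanishing/flexibility direction, i.e.\ exhibiting for each intermediate $d$ an \emph{irreducible} representative that is provably not $G$-equivalent to the Schubert variety. The complete-intersection construction furnishes irreducibility and the correct homology class cheaply; the subtle point is the $G$-inequivalence, which must be checked against \emph{all} Schubert varieties of that dimension (there is only one, but it may itself be a degenerate quadric rather than a linear space). I expect the cleanest uniform argument to be a degree comparison in the fixed projective embedding $\cQ \hookrightarrow \bP^{n+1}$: the Schubert variety in dimension $d$ has degree $1$ or $2$, whereas one can always choose the representative $Y$ to be a smooth complete intersection of degree $2$ that is \emph{not} a hyperplane section (take two or more hyperplanes so that, when $2m-1-d \ge 2$, the cycle has degree $2$ but sits in codimension $\ge 2$ inside $\cQ$, distinguishing it from any degenerate quadric Schubert variety, which is cut by a single hyperplane); when the Schubert variety is linear, the degree-$2$ representative is immediately distinct. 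A small amount of case-checking near the boundary values $d = 2m-2$ (resp.\ $d = 2m-1$, or $d = 2m-1$ vs.\ the even case) is needed to ensure such a representative exists, but this is routine once the Schubert stratification of \cite{MR1782635} is in hand.
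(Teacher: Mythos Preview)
Your proposal is correct and follows essentially the same approach as the paper: use the Betti-number computation $H_{2d}(\cQ,\bZ)\cong\bZ$ (away from the middle dimension in the even case) together with a general complete intersection to produce a non-Schubert representative, and invoke Hong's rigidity result for the two maximal-linear-space classes in the even case. The paper's own argument is in fact terser than yours---it simply asserts that a general complete intersection gives a nontrivial representative without spelling out the $G$-inequivalence---so your discussion of distinguishing the representative from the Schubert variety by degree and smoothness is a welcome elaboration rather than a deviation.
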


%------------------------------------------------------------------------------
\subsection{Grassmannians}
\label{S:sGr}
%------------------------------------------------------------------------------

It is well-known that Schubert varieties in $\tGr(\tti, n+1)$ are indexed by partitions 
\begin{equation} \nonumber %\label{E:Apart}
  \lambda \,=\, (\lambda_1,\ldots,\lambda_\tti) \in \bZ^\tti 
  \quad\hbox{such that}\quad
  1 \le \lambda_1 < \lambda_2 < \cdots < \lambda_\tti \le n+1 \,,
\end{equation}
cf. \cite[\S3.1.3]{MR1782635}.  Fix a flag $0 \subset F^1 \subset F^2 \subset\cdots\subset F^{n+1} = \bC^{n+1}$.  The corresponding Schubert variety is
\begin{equation} \nonumber %\label{E:Y1}
  Y_\lambda(F^\sbullet) \ = \ \{ E \in \tGr(\tti,n+1) \ | \ 
  \tdim( E \cap F^{\lambda_k}) \ge k \,,\ 1 \le k \le \tti \} \,.
\end{equation}
Define $\tilde\lambda_k = \lambda_k - k$; then $0 \le \tilde\lambda_1 \le \cdots \le \lambda_\tti \le n+1-\tti$.  Condense $\tilde\lambda$ by writing $\tilde\lambda = (\n_\sfp^{c_\sfp},\ldots,\n_1^{c_1},\n_0^{c_0})$ with $0 \le \n_\sfp < \cdots < \n_1 < \n_0 \le n+1-\tti$, and $0< c_s$, for all $0 \le s \le \sfp$.  Corollary \ref{C:SchurFlex}, \cite[Theorem 6.1]{MR2960030}, Remark \ref{R:tlambda} and Lemma \ref{L:part} yield the following.

\begin{theorem}[Schur rigidity in Grassmannians] \label{T:Gr}
The Schubert class $\xi_\lambda \in H_\sbullet(\tGr(\tti,n+1),\bZ)$ is Schur rigid if and only if the following conditions hold:
\begin{a_list}
\item $\n_{s-1} - \n_s \ge 2$, for all $1 \le s \le \sfp$;
\item $c_s \ge 2$, for all $1 \le s \le \sfp-1$;
\item if $\n_\sfp > 0$, then $c_\sfp \ge 2$;
\item if $\n_0 < n+1-\tti$, then $c_0 \ge 2$.
\end{a_list}
\end{theorem}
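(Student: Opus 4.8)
The plan is to reduce the theorem to a purely combinatorial translation problem: the geometric statement about partitions $\lambda$ (or, more conveniently, the normalized partition $\tilde\lambda$) on one side, and the representation-theoretic condition $\sfH_+$ of \cite[Theorem 6.1]{MR2960030} together with Corollary \ref{C:SchurFlex} on the other. By Corollary \ref{C:SchurFlex}, $\xi_\lambda$ is Schur rigid if and only if $\cO_w = 0$, which by Definition \ref{D:H1H2} is equivalent to condition $\sfH_+$; and for $X = \tGr(\tti,n+1)$, \cite[Theorem 6.1]{MR2960030} expresses $\sfH_+$ in terms of the representation-theoretic data attached to $w$ (the index set $\sfa_J$ or equivalent combinatorics of Section \ref{S:aJ}). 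So the real work is Lemma \ref{L:part}: a dictionary between that representation-theoretic data and the block decomposition $\tilde\lambda = (\n_\sfp^{c_\sfp},\ldots,\n_0^{c_0})$, together with Remark \ref{R:tlambda} recording how passing from $\lambda$ to $\tilde\lambda$ affects the data.

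First I would fix the standard identification of Schubert varieties in $\tGr(\tti,n+1)$ with the Schubert varieties of $\tSL_{n+1}(\bC)/P_\tti$, recalling that the relevant combinatorial object is a certain subdiagram of the marked Dynkin diagram of $A_n$ (equivalently the Young diagram of $\tilde\lambda$ sitting inside the $\tti \times (n+1-\tti)$ rectangle). The key point is that the "corners" of this Young diagram — the places where the boundary path turns — correspond exactly to the distinct values $\n_0 > \n_1 > \cdots > \n_\sfp$ appearing in $\tilde\lambda$, and the multiplicities $c_s$ record the lengths of the horizontal/vertical runs between consecutive corners. The condition $\sfH_+$ from \cite[Theorem 6.1]{MR2960030}, as I expect Lemma \ref{L:part} to spell out, is a statement that certain adjacency/spacing conditions hold at each corner of the diagram: roughly, that no "step" of the boundary has length $1$ unless it is forced by touching the boundary of the ambient rectangle. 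Translating each clause of $\sfH_+$ through the dictionary then gives precisely the four conditions (a)–(d): (a) says consecutive corner values differ by at least $2$ (the "inner" steps are not length one); (b) says each interior block has multiplicity at least $2$; and (c), (d) are the boundary cases where a block abuts the top/left edge ($\n_\sfp > 0$) or the bottom/right edge ($\n_0 < n+1-\tti$) of the rectangle — exactly the situations where the edge of the rectangle does \emph{not} already provide the required "width," so the multiplicity must itself be at least $2$.

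The main obstacle I anticipate is bookkeeping at the boundary: getting the edge cases (c) and (d) exactly right, i.e., correctly identifying when the ambient rectangle's boundary substitutes for a missing unit of multiplicity in the condition $\sfH_+$, and when it does not. This is where off-by-one errors creep in, and it is also where the asymmetry between the two ends of the partition — the "$\n_\sfp > 0$" condition versus the "$\n_0 < n+1-\tti$" condition — must be handled carefully, since the marking $P_\tti$ breaks the symmetry of the $A_n$ diagram. Concretely, I would verify (c) and (d) by checking the extreme simplex-type cases (e.g. $\lambda$ a single hook, or $\tilde\lambda$ a full rectangle) against the known rigidity results of Bryant and Walters cited in the introduction, which pin down the answer in those cases and serve as a consistency check on the dictionary. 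Once Lemma \ref{L:part} and Remark \ref{R:tlambda} are established, the theorem follows immediately by combining them with Corollary \ref{C:SchurFlex} and \cite[Theorem 6.1]{MR2960030}, as indicated in the paragraph preceding the statement.
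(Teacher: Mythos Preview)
Your proposal is correct and follows essentially the same route as the paper: the paper's proof of Theorem \ref{T:Gr} is precisely to combine Corollary \ref{C:SchurFlex}, \cite[Theorem 6.1]{MR2960030}, Remark \ref{R:tlambda}, and Lemma \ref{L:part}, which is exactly the logical chain you outline. Your additional Young-diagram intuition and the suggestion to sanity-check boundary cases against Bryant--Walters are helpful heuristics but not needed for the argument itself.
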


%------------------------------------------------------------------------------
\subsection{Lagrangian Grassmannians}
\label{S:sLG}
%------------------------------------------------------------------------------

It is well-known that Schubert varieties in $\tLG(n,2n)$ are indexed by partitions $1 \le \lambda_1 < \lambda_2 < \cdots < \lambda_n \le 2n$ with the property that $\lambda_i \in \lambda$ if and only if $2n+1-\lambda_i\not\in\lambda$, cf. \cite[\S9.3]{MR1782635}.  Given a non-degenerate skew-symmetric bilinear form on $\bC^{2n}$, fix an isotropic flag $F^\sbullet$ in $\bC^{2n}$.  The corresponding Schubert variety is 
$$
  Y_\lambda(F^\sbullet) \ = \ \{ E \in \tLG(n,2n) \ | \ \tdim(E\cap F^{\lambda_k}) \ge k
  \,,\ 1 \le k \le n \} \, .
$$ 
Define $\tilde\lambda = (\n_\sfp^{c_\sfp},\ldots,\n_0^{c_0})$ as in Section \ref{S:sGr}.  Corollary \ref{C:SchurFlex}, \cite[Theorem 6.1]{MR2960030}, Remark \ref{R:tlambda} and Lemma \ref{L:Cpart} yield the following.

\begin{theorem}[Schur rigidity in Lagrangian Grassmannians] \label{T:LG}
The Schubert class $\xi_\lambda \in H_\sbullet(\tLG(n,2n),\bZ)$ is Schur rigid if and only if the following conditions hold:
\begin{a_list}
\item if $\n_\sfp >0$, then $2 \le c_s$ for all $1 \le s \le \sfp$;
\item if $\n_\sfp = 0$, then $2 \le c_s$ for all $0 \le s \le \sfp-1$.
\end{a_list}
\end{theorem}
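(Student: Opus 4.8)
The plan is to derive Theorem \ref{T:LG} as a mechanical translation of \cite[Theorem 6.1]{MR2960030}, exactly as the paragraph preceding the statement promises. The three ingredients — Corollary \ref{C:SchurFlex} (Schur rigidity $\Leftrightarrow \cO_w = 0 \Leftrightarrow$ the condition $\sfH_+$), the representation-theoretic characterization of the classes satisfying $\sfH_+$ in \cite[Theorem 6.1]{MR2960030}, and the dictionary (Remark \ref{R:tlambda} and Lemma \ref{L:Cpart}) between the partition $\lambda$ indexing a Schubert variety $Y_\lambda(F^\sbullet) \subset \tLG(n,2n)$ and the Lie-theoretic datum $\fa_J$ of Section \ref{S:aJ} — already do all the conceptual work. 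So the task is to substitute the $C_n$ combinatorics into the general criterion and read off the condensed-partition inequalities in (a) and (b).

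First I would recall the setup for $X = \tLG(n,2n) = C_n/P_n$: fix the simple roots $\alpha_1,\dots,\alpha_n$ in the standard Bourbaki order, with $P = P_n$ the maximal parabolic omitting $\alpha_n$. Using Lemma \ref{L:Cpart} I would write out explicitly which subset $J \subset \{1,\dots,n\}$ of simple roots corresponds to the condensed partition $\tilde\lambda = (\n_\sfp^{c_\sfp},\dots,\n_0^{c_0})$; the "blocks" of equal parts $\n_s^{c_s}$ correspond to the connected components of the Dynkin subdiagram on $J$, and the block multiplicities $c_s$ record the lengths of those components (with the component adjacent to the branch/end node $\alpha_n$ playing a special role, which is the source of the $\n_\sfp = 0$ versus $\n_\sfp > 0$ dichotomy). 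Next I would quote the condition $\sfH_+$ from \cite[Theorem 6.1]{MR2960030} in its representation-theoretic form — it is a statement that certain $\fa_J$-submodules of a nilpotent piece of $\fg$ vanish, equivalently that no component of $J$ is "too short" or "too close" to the marked node — and then simply track what that says about the $c_s$. In the $C_n$ case the criterion collapses to: every interior block must have multiplicity $\ge 2$, the block at the $\alpha_0$-end must have multiplicity $\ge 2$, and the block at the $\alpha_n$-end must have multiplicity $\ge 2$ unless $\n_\sfp = 0$ (i.e. unless that block already abuts the end node in the degenerate way that makes the corresponding cohomology automatically vanish). Rewriting this gives precisely (a) and (b).

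I expect the main obstacle to be purely bookkeeping: getting the indexing conventions to line up. There are three potential sources of off-by-one or reversal errors — (i) the passage $\tilde\lambda_k = \lambda_k - k$ and the condensation into blocks $\n_s^{c_s}$ (handled by Remark \ref{R:tlambda}); (ii) the identification, via Lemma \ref{L:Cpart}, of a block pattern with the subset $J$ and hence with the simple-root components $\fa_J$; and (iii) which end of the Dynkin diagram $A_{c_s}$-component sits against the cominuscule node $\alpha_n$, since in type $C_n$ the node $\alpha_n$ is the short end and this is exactly what distinguishes the case $\n_\sfp = 0$ (the smallest part of $\tilde\lambda$ is zero, equivalently $\lambda_1 = 1$, equivalently the Schubert variety passes through $o$ tangent to a coordinate Lagrangian in the extremal way) from $\n_\sfp > 0$. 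Once these three conventions are pinned down, the derivation is immediate: one checks that the vanishing of each graded obstruction piece in \cite[Theorem 6.1]{MR2960030} is equivalent to the stated lower bound $c_s \ge 2$ on the corresponding block, with the single exception built into (b). No analytic or geometric input beyond Corollary \ref{C:SchurFlex} is needed; the content is entirely in the already-established equivalence and the appendix dictionary, so the proof is essentially a one-paragraph "apply Corollary \ref{C:SchurFlex}, \cite[Theorem 6.1]{MR2960030} and Lemma \ref{L:Cpart}, and simplify."
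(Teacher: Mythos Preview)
Your proposal is correct and follows exactly the route the paper takes: the paper's proof of Theorem~\ref{T:LG} is literally the sentence ``Corollary~\ref{C:SchurFlex}, \cite[Theorem 6.1]{MR2960030}, Remark~\ref{R:tlambda} and Lemma~\ref{L:Cpart} yield the following,'' and your plan is precisely to carry out that translation. One small caution on your informal summary: your description of which end-block is unconstrained is slightly garbled (there is no ``$\alpha_0$-end,'' and in case~(a) it is $c_0$, not $c_\sfp$, that carries no constraint --- the symmetry \eqref{E:Cpart} forces $\nu_0 = n$ exactly when $\nu_\sfp > 0$, which is why the two cases look dual), but this is bookkeeping of the sort you already flagged as the main hazard, not a defect in the strategy.
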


\noindent See Table \ref{t:C5P5} (page \pageref{t:C5P5}) for a list of Schur rigid Schubert classes in $\tLG(5,10)$.

%------------------------------------------------------------------------------
\subsection{Spinor varieties}
\label{S:sSpinor}
%------------------------------------------------------------------------------

It is well-known that the Schubert varieties of $\cS_n = D_n/P_n$ are indexed by partitions $1 \le \lambda_1 < \lambda_2 < \cdots < \lambda_n \le 2n$ with the properties that $\lambda_i \in \lambda$ if and only if $2n+1-\lambda_i\not\in\lambda$, and $\# \left\{ i \ | \ \lambda_i > n \right\} $ is even, cf. \cite[\S9.3]{MR1782635}.  Given a non-degenerate symmetric bilinear form on $\bC^{2n}$, fix an isotropic flag $F^\sbullet$ in $\bC^{2n}$.  The corresponding Schubert variety is given by 
\begin{equation} \nonumber 
  Y_\lambda(F^\sbullet) \ = \ \{ E \in \cS_n \ | \ \tdim(E\cap F^{\lambda_k}) \ge k
  \,,\ 1 \le k \le n \} \, .
\end{equation}

Decompose $\lambda = \hat\m_\sfp\cdots\hat\m_1\hat\m_0$ into blocks $\hat\m_s$ of `consecutive' integers, with the convention that the integers $n-1,n+1$ are considered consecutive and are placed in the same $\hat\m_s$--block; likewise, the integers $n,n+2$ are consecutive.  For example, if $n=5$, then $\lambda = (2,3,4,6,10)$ has block decomposition $\hat\m_1\hat\m_0 = (2,3,4,6)(10)$; likewise, $\lambda = (1,2,5,7,8)$ has block decomposition $\hat\m_1\hat\m_0 = (1,2)(5,7,8)$.  

Let $\hat c_s = |\hat\m_s|$ be the number of terms in the $s$--th block, and define
\begin{equation} \nonumber  
  \sfr \,=\, \sfr(\lambda) \ = \ 
  \left\{ \renewcommand{\arraystretch}{1.2} \begin{array}{ll}
    \lfloor \sfp/2 \rfloor & \hbox{ if } \lambda_1 = 1 \,,\\
    \lceil \sfp/2 \rceil & \hbox{ if } \lambda_1 > 1 \,.
  \end{array} \right.
\end{equation}
Then $\hat\m_\sfr$ is the block that has nonempty intersection with $\{ n,n+1\}$.  Corollary \ref{C:SchurFlex}, \cite[Theorem 6.1]{MR2960030} and Lemma \ref{L:Dpart} yield the following.

\begin{theorem}[Schur rigidity in Spinor varieties] \label{T:Spinor}
The Schubert class $\xi_\lambda \in H_\sbullet(\cS_n,\bZ)$ is Schur rigid if and only if one of the following conditions hold:
\begin{a_list}
\item $\lambda_1 > 1$, $2\le \hat c_s$ for all $1\le s\le\sfp$, and $3 \le \hat c_\sfr$;
\item $\lambda_1 = 1$, $2\le \hat c_s$ for all $0 \le s \le \sfp-1$, and $3 \le \hat c_\sfr$.
\end{a_list}
\end{theorem}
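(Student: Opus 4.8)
The plan is to run the same dictionary argument used for Theorems~\ref{T:Gr} and~\ref{T:LG}, now in type $D_n$. By Corollary~\ref{C:SchurFlex}, the class $\xi_\lambda$ is Schur rigid precisely when $\cO_w=0$, equivalently when the condition $\sfH_+$ of Definition~\ref{D:H1H2} holds for the element $w$ indexing $\xi_\lambda$. The Schubert classes of $\cS_n = D_n/P_n$ satisfying $\sfH_+$ are enumerated by \cite[Theorem~6.1]{MR2960030} in terms of the combinatorial $\fa_J$-data of Section~\ref{S:aJ}, so the substance of the proof is the transcription of that list into the partition language of \cite[\S9.3]{MR1782635}. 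First I would record, via Lemma~\ref{L:Dpart}, how the $\fa_J$-data attached to $w$ is read off from the block decomposition $\lambda = \hat\m_\sfp\cdots\hat\m_0$: the ``lengths'' that enter $\sfH_+$ are the block sizes $\hat c_s = |\hat\m_s|$, and the distinguished length attached to the branch node of the $D_n$ diagram corresponds to $\hat c_\sfr$, where $\hat\m_\sfr$ is the block meeting $\{n,n+1\}$.

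Next I would substitute this identification into \cite[Theorem~6.1]{MR2960030}. The representation-theoretic inequalities there become ``$2 \le \hat c_s$'' for each non-extremal block, together with the sharper bound ``$3 \le \hat c_\sfr$'' at the branch block. Which blocks count as extremal --- and hence the exact range over which ``$2 \le \hat c_s$'' is imposed --- is governed by whether the smallest block $\hat\m_\sfp$ starts at the bottom of the flag, i.e.\ whether $\lambda_1 = 1$; this is precisely the dichotomy between alternatives (a) and (b). Matching the endpoints $1 \le s \le \sfp$ (when $\lambda_1 > 1$) against $0 \le s \le \sfp-1$ (when $\lambda_1 = 1$), and comparing with the corresponding boundary clauses of \cite[Theorem~6.1]{MR2960030}, then closes the argument once the translation is in hand.

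The point that needs genuine care --- and the only place where type $D_n$ departs from the otherwise parallel type $C_n$ computation behind Theorem~\ref{T:LG} --- is the bound ``$3 \le \hat c_\sfr$'' in place of ``$2 \le \hat c_\sfr$'' at the branch block. The nilradical of $\fp_n \subset \fso_{2n}$ is $\wedge^2\bC^n$ as a module over the Levi factor $\tGL_n$, whereas for $\tSp_{2n}$ it is $\tSym^2\bC^n$; the attendant shift in the Lie-algebra-cohomology degrees feeding $\cO_w$ is what raises the inequality at the branch block by one. This is already built into \cite[Theorem~6.1]{MR2960030} and Lemma~\ref{L:Dpart}, so the remaining labor is purely combinatorial: checking that the branch-node length is identified with $\hat c_\sfr$ including the degenerate cases $\sfr = 0$ and $\sfr = \sfp$, tracking how the parity of $\sfp$ enters through $\lfloor \sfp/2\rfloor$ versus $\lceil \sfp/2\rceil$ in the definition of $\sfr$, and confirming that the extremal-block conventions are compatible with the constraints $\lambda_i \in \lambda \iff 2n+1-\lambda_i \notin \lambda$ and ``$\#\{\,i : \lambda_i > n\,\}$ even'' that single out the Schubert classes of $\cS_n$. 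I expect this bookkeeping, rather than any conceptual step, to be the main obstacle.
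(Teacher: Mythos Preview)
Your proposal is correct and follows essentially the same route as the paper: invoke Corollary~\ref{C:SchurFlex} to reduce Schur rigidity to the condition $\sfH_+$, cite \cite[Theorem~6.1]{MR2960030} for the list of classes satisfying $\sfH_+$ in the $(\sfa,\ttJ)$ language, and then use Lemma~\ref{L:Dpart} to translate into partition data. Your additional discussion of why the branch-block bound sharpens from $2$ to $3$ is helpful context but not part of the paper's argument, which simply defers that point to \cite[Theorem~6.1]{MR2960030}.
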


\noindent See Table \ref{t:D6P6} (page \pageref{t:D6P6}) for a list of Schur rigid Schubert classes in $\cS_6$.

There exists an inclusion--preserving bijection between the set $\{ Y_{\lambda'} \subset \tLG(n,2n)\}$ of Schubert varieties in the Lagrangian Grassmannian and the set $\{ Y_\lambda \subset \cS_{n+1}\}$ of Schubert varieties in the Spinor variety.  Given a partition $\lambda = (\lambda_1,\ldots,\lambda_{n+1})$ indexing a Schubert variety in $\cS_{n+1}$, the corresponding Schubert variety in the Lagrangian Grassmannian is indexed by $\lambda' = (\lambda'_1,\ldots,\lambda'_n)$, where
$$
  \lambda' \ = \ \left\{
  \begin{array}{ll}
    \lambda'_j = \lambda_j & \hbox{ if } \lambda_j < n+1 \,,\\
    \lambda'_j = \lambda_j - 2 & \hbox{ if } \lambda_j > n+2 \,.
  \end{array} \right.
$$
Basically, $\lambda'$ is obtained from $\lambda$ as follows:  $\lambda$ contains precisely one of $\{n+1,n+2\}$; remove that integer, and shift those above down by two.  

Theorems \ref{T:LG} and \ref{T:Spinor} yield the following.

\begin{corollary} \label{C:LGspin}
The Schubert class $\xi_{\lambda'} \in H_\sbullet(\tLG(n,2n),\bZ)$ is Schur rigid if and only if the corresponding Schubert class $\xi_\lambda \in H_\sbullet(\cS_{n+1},\bZ)$ is Schur rigid.
\end{corollary}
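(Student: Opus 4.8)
The plan is to read off Corollary \ref{C:LGspin} from Theorems \ref{T:LG} and \ref{T:Spinor} by tracking how the explicit bijection $\lambda\mapsto\lambda'$ (displayed just above) transforms the combinatorial data entering the two criteria. Recall that the Lagrangian criterion is stated in terms of the condensed partition $\tilde\lambda'=(\nu_\sfp^{c_\sfp},\dots,\nu_0^{c_0})$ of $\tilde\lambda'_k=\lambda'_k-k$, while the Spinor criterion (applied to $\cS_{n+1}$, so with $n$ replaced by $n{+}1$ throughout Theorem \ref{T:Spinor}) is stated in terms of the block decomposition $\lambda=\hat\m_\sfp\cdots\hat\m_0$, the block sizes $\hat c_s=|\hat\m_s|$, and the distinguished index $\sfr$, where $\hat\m_\sfr$ is the unique block meeting $\{n{+}1,n{+}2\}$. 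I would first record the elementary fact, valid for any partition, that equal values of $\tilde\lambda'_k=\lambda'_k-k$ correspond exactly to maximal runs of consecutive integers in $\lambda'$; thus $c_0,\dots,c_\sfp$ are precisely the sizes of the ordinary blocks of $\lambda'$, listed from the block of largest entries to that of smallest entries --- the same ordering convention used for the $\hat\m_s$.

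The core of the proof is the claim that $\lambda\mapsto\lambda'$ carries the block decomposition of $\lambda$ (with the Spinor conventions $n{+}1\sim n{+}3$, $n\sim n{+}2$) to the block decomposition of $\lambda'$ (with ordinary consecutivity), matching $\hat\m_s$ with the $s$-th block of $\lambda'$, so that $c_s=\hat c_s$ for every $s\neq\sfr$ while the distinguished block shrinks by one: $c_\sfr=\hat c_\sfr-1$. The mechanism is that $\lambda$ contains exactly one element $m$ of $\{n{+}1,n{+}2\}$, that $m\in\hat\m_\sfr$, and that deleting $m$ and pushing the entries exceeding $n{+}2$ down by two converts the ``skip'' built into the Spinor consecutivity convention into honest consecutivity: $\hat\m_\sfr$ becomes an ordinary run of integers, one shorter than before, while no other block is merged with a neighbour, split, or resized (the gaps separating blocks remain $\ge2$ after the shift). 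I would verify this by a short case analysis on whether $m=n{+}1$ or $m=n{+}2$ and on whether $n\in\lambda$, using the defining relation $j\in\lambda\iff 2n{+}3-j\notin\lambda$ to pin down which neighbours of $n{+}1,n{+}2$ occur; the same bookkeeping shows $\hat c_\sfr\ge2$ always (so the distinguished block survives the deletion and the two decompositions have the same number $\sfp{+}1$ of blocks), and that $\lambda_1=1\iff\lambda'_1=1\iff\tilde\lambda'_1=\nu_\sfp=0$, so the dichotomy ``$\lambda_1=1$ versus $\lambda_1>1$'' of Theorem \ref{T:Spinor} matches the dichotomy ``$\nu_\sfp=0$ versus $\nu_\sfp>0$'' of Theorem \ref{T:LG}.

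Granting this, the corollary reduces to matching the two inequality lists. When $\lambda_1>1$ (so $\nu_\sfp>0$), Theorem \ref{T:Spinor}(a) requires $\hat c_s\ge2$ for $1\le s\le\sfp$ together with $\hat c_\sfr\ge3$; since $c_s=\hat c_s$ for $s\neq\sfr$, $c_\sfr=\hat c_\sfr-1$, and $1\le\sfr\le\sfp$ in this case (as $\sfr=\lceil\sfp/2\rceil$ when $\sfp\ge1$), this is exactly ``$c_s\ge2$ for all $1\le s\le\sfp$,'' which is Theorem \ref{T:LG}(a); the reverse implication is identical. The case $\lambda_1=1$ is handled the same way, with the excluded index $0$ replaced by $\sfp$ and $\sfr=\lfloor\sfp/2\rfloor\in[0,\sfp{-}1]$. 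The only index collision arises in the degenerate case $\sfp=0$ (so $\hat\m_0=\hat\m_\sfr$ is simultaneously the top and bottom block); there Theorem \ref{T:LG} declares $\xi_{\lambda'}$ rigid unconditionally, and one notes that the single block of $\lambda$ carries all $n{+}1$ of its entries, whence $\hat c_\sfr=n{+}1\ge3$ (for $n\ge2$; the remaining tiny cases reduce to projective space, where every Schubert class is rigid), so Theorem \ref{T:Spinor} also declares $\xi_\lambda$ rigid and the biconditional persists. I expect the case-by-case verification of the block lemma --- in particular, confirming that the Spinor consecutivity convention interacts with the deletion-and-shift exactly as asserted --- to be the one genuinely laborious step; the ensuing comparison of inequalities is pure bookkeeping.
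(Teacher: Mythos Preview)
Your approach is correct and is precisely the route the paper takes: the paper simply asserts that the corollary follows from Theorems \ref{T:LG} and \ref{T:Spinor}, leaving the combinatorial comparison implicit. You have supplied exactly that comparison---tracking how the bijection $\lambda\mapsto\lambda'$ carries the Spinor block decomposition $\hat\m_\sfp\cdots\hat\m_0$ to the ordinary block decomposition of $\lambda'$, with $c_s=\hat c_s$ for $s\neq\sfr$ and $c_\sfr=\hat c_\sfr-1$, and then matching the inequality lists---so there is nothing further to add beyond what the paper states.
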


\noindent The corollary is illustrated by a comparison of Tables \ref{t:C5P5} and \ref{t:D6P6}.

%------------------------------------------------------------------------------
\section{Review} \label{S:review}
%------------------------------------------------------------------------------

%------------------------------------------------------------------------------
\subsection{Notation and background} \label{S:not}
%------------------------------------------------------------------------------

This is a continuation of \cite{MR2960030}.  With the exception noted in Remark \ref{R:newaJ}, I will use the notation of that paper.  To streamline the presentation, I will assume that the reader has reviewed the discussion of rational homogeneous varieties, their Schubert subvarieties, grading elements and Hasse posets in \cite[Sections 2.1-2.4 and 3.1]{MR2960030}.  Briefly, $G$ is a complex simple Lie group.  A choice of Cartan and Borel subgroups $H \subset B$ has been fixed, $P \supset B$ is a maximal parabolic subgroup associated with a cominuscule root, and $X = G/P$ is the corresponding cominuscule variety.  The associated Lie algebras are denoted $\fh \subset \fb \subset \fp \subset \fg$.  Let $W$ denote the Weyl group of $\fg$, and $W_\fp$ the Weyl group of the reductive component in the Levi decomposition of $\fp$.  The Hasse poset $W^\fp$ is the set of minimal length representatives of the coset space $W_\fp \backslash W$, and indexes the Schubert classes.  Let 
$$
  o = P/P \ \in \ X = G/P \, .
$$
Given $w \in W^\fp$, the Zariski closure
$$
  Y_w \ = \ \overline{B w^{-1} \cdot o}
$$
is a Schubert variety.  Any $G$--translate of the Schubert variety $Y_w$ will be referred to as a \emph{Schubert variety of type $w$}.  Let $\xi_w = [Y_w] \in H_{2|w|}(X,\bZ)$ denote the corresponding Schubert class. 

Let $\{ Z_1,\ldots,Z_r\}$ be the basis of $\fh$ dual to the simple roots $\{\a_1,\ldots,\a_r\}$.  Let $\a_\tti$ be the simple root associated with the cominuscule $\fp$.  Then $\fg$ decomposes into $Z_\tti$--eigenspaces
\begin{equation} \label{E:gi}
  \fg \ = \ \fg_1 \,\op\, \fg_0 \,\op\, \fg_{-1} \,,\quad\hbox{where}\quad
  \fg_k \, := \, \{ A \in \fg \ | \ [Z_\tti \,,\, A] = k A \} \,;
\end{equation}
we call this eigenspace decomposition the \emph{$Z_\tti$--graded decomposition} of the Lie algebra $\fg$.  Moreover,
\begin{equation} \label{E:p}
  \fp \ = \ \fg_1 \,\op\, \fg_0 \,,
\end{equation}
and $\fg_0$ is the reductive component of the parabolic subalgebra $\fp$.   

\begin{remark} \label{R:abelian}
By the Jacobi identity, $[\fg_k \,,\, \fg_\ell ] \subset \fg_{k+\ell}$.  In particular, the subalgebras $\fg_{\pm1} \subset \fg$ are abelian, $[ \fg_1 \,, \, \fg_1] \ = \ \{0\} \ = \ [ \fg_{-1} \,,\, \fg_{-1} ]$.  
\end{remark}

Let $\Delta$ denote the set of roots of $\fg$.  Given $\a \in \Delta$, let $\fg_\a \subset \fg$ denote the corresponding root space.  Given any subset $\fs \subset \fg$, let 
$$
  \Delta(\fs) \ = \ \{ \a \in \Delta \ | \ \fg_\a \subset \fs \} \,.
$$
Given a subset $U$ of a vector space, let $\langle U \rangle$ denote the linear span.

%------------------------------------------------------------------------------
\subsection{The Schur differential system} \label{S:DS}
%------------------------------------------------------------------------------

There is a natural $\fg_0$--module identification 
$$
  T_oX \ \simeq \ \fg_{-1} \,.
$$  
Kostant \cite[Corollary 8.2]{MR0142696} showed that the $\ell$--th exterior power decomposes as
$$
  \tw^\ell T_oX \ = \ \bigoplus_{\mystack{w\in W^\fp}{|w|=\ell}} \bI_w
$$
into irreducible $\fg_0$--modules. The highest weight line $\fn_w \in \bP \bI_w$ is defined by 
\begin{equation} \label{E:nw}
  \Delta(w) \ = \ w\Delta^-\cap\Delta^+
  \quad\hbox{and}\quad 
  \fn_w \ = \ \bigoplus_{\a\in\Delta(w)} \, \fg_{-\a}
\end{equation}

\begin{remark} \label{R:Dw}
One important consequence of Remark \ref{R:abelian} is that given a set $\Phi \subset \Delta(\fg_1)$, there exists $w \in W^\fp$ such that $\Delta(w) = \Phi$ if and only if $\Delta^+ \backslash \Phi$ is closed.  For details see \cite[Section 2.3]{MR2960030}.
\end{remark}

Consider the set of tangent $\ell$--planes $\tGr(\ell,T_oX)$ as a subvariety of $\bP(\tw^\ell T_oX)$ via the Pl\"ucker embedding, and set $R_w = \tGr(|w|,T_oX) \,\cap\, \bP\,\bI_w$.  The \emph{Schur system} is the homogeneous bundle $\cR_w \subset \tGr(|w|,TX)$ with fibre $R_w$ over $o \in X$.  Given a complex submanifold $M \subset X$ of dimension $|w|$, let $\cM \subset \tGr(|w|,TX)$ denote the canonical lift.  An \emph{integral manifold $M$ of the Schur system} is a complex submanifold $M \subset X$ with the property that $\cM \subset \cR_w$.  The \emph{Schur system is rigid} if every connected integral manifold is contained in a Schubert variety of type $w$.  An integral manifold that is not contained in any Schubert variety of type $w$ we call \emph{nontrivial}.  When there exist nontrivial integrals we say the \emph{Schur system is flexible}.

An \emph{integral variety of the Schur system} is a subvariety $Y \subset X$ with the property that the set of smooth points $Y^0$ is an integral manifold of the Schur system.    The Schubert class $\xi_w$ (or the Schubert variety $Y_w$) is \emph{Schur rigid} if every irreducible integral variety of the Schur system is a Schubert variety of type $w$.  An irreducible integral variety that is not Schubert variety of type $w$ is \emph{nontrivial}.  When there exist nontrivial integral varieties we say the Schubert class $\xi_w$ (or the Schubert variety $Y_w$) is \emph{Schur flexible}.  

\begin{theorem}[{\cite{SchurRigid, Walters}}] \label{T:Schur}
The homology class $[Y] \subset H_{2|w|}(X,\bZ)$ represented by a variety $Y \subset X$ is an integer multiple of the Schubert class $\xi_w$ if and only if $Y$ is an integral of the Schur system $\cR_w$.
\end{theorem}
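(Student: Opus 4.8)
The plan is to show that the Pl\"ucker/Schur condition at every point is precisely the infinitesimal shadow of the homological condition $[Y]\in\bZ\xi_w$, and then integrate that pointwise statement. First I would set up the two sides. On the homological side, $[Y]\in\bZ\xi_w$ means that the class $[Y]\in H_{2|w|}(X,\bZ)$ is a multiple of the generator $\xi_w$ of the summand spanned by that Schubert class; because $X=G/P$ is cominuscule, Kostant's decomposition $\tw^{|w|}T_oX=\bigoplus_{|w'|=|w|}\bI_{w'}$ is a decomposition into pairwise inequivalent irreducible $\fg_0$-modules, and this extends to a decomposition of the homogeneous bundle $\tw^{|w|}TX$ into irreducible homogeneous subbundles. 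The key algebraic input is that, on the level of forms, the $\fg_0$-invariant projection of $\tw^{|w|}T^*X$ onto the dual of $\bI_{w'}$ is represented by a $G$-invariant differential form $\Phi_{w'}$ of degree $2|w'|$ (a ``Schubert form''), and the pairing $\int_Y\Phi_{w'}=\langle[Y],\xi_{w'}\rangle$ computes the coefficient of $\xi_{w'}$ in $[Y]$. This is the content already used in \cite{SchurRigid, Walters}, and I would invoke it.

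Next I would argue the equivalence. Suppose $Y$ is an integral of the Schur system $\cR_w$. Then at each smooth point $y\in Y^0$, the tangent $|w|$-plane $T_yY$, viewed as a point of $\tGr(|w|,T_yX)\subset\bP(\tw^{|w|}T_yX)$, lies in $\bP\,\bI_w|_y$. Hence for every $w'\neq w$ with $|w'|=|w|$, the decomposable $|w|$-vector representing $T_yY$ is annihilated by the projection onto the bundle dual to $\bI_{w'}$, i.e.\ $\Phi_{w'}|_Y\equiv0$. Therefore $\langle[Y],\xi_{w'}\rangle=\int_Y\Phi_{w'}=0$ for all $w'\neq w$, so $[Y]$ is supported entirely in $\bZ\xi_w$; since $[Y]$ is an integral homology class, $[Y]\in\bZ\xi_w$. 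Conversely, suppose $[Y]\in\bZ\xi_w$. The forms $\Phi_{w'}$ for $w'\neq w$ are $G$-invariant, hence closed, and the component of $[Y]$ along $\xi_{w'}$ vanishes, so $\int_{Y'}\Phi_{w'}=0$ for every subvariety $Y'\subset Y$ of dimension $|w|$; but one must upgrade this from ``integral vanishes on all cycles'' to ``$\Phi_{w'}$ restricts to zero pointwise on $T_yY$.'' This is where positivity enters: each nonzero $\bI_{w'}$-component corresponds to a Schubert form which is (weakly) positive on complex $|w'|$-planes --- a consequence of the fact that the $\Phi_{w'}$ are, up to scale, the ``projected'' powers built from the \kahler form / the $\fg_0$-invariant Hermitian structure on $\fg_{-1}$ --- so $\int_Y\Phi_{w'}=0$ with $\Phi_{w'}|_{T_yY}\ge0$ forces $\Phi_{w'}|_{T_yY}=0$ at every smooth point $y$. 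Since this holds for all $w'\neq w$ of the same degree, the decomposable $|w|$-vector $T_yY$ has zero component in every $\bI_{w'}$, hence lies in $\bI_w$; that is, $\cM\subset\cR_w$, so $Y$ is an integral of the Schur system.

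The main obstacle is the converse direction, specifically justifying the positivity/semidefiniteness of the Schubert forms $\Phi_{w'}$ on complex subspaces, which is what allows one to pass from the global vanishing of periods to the pointwise vanishing of the restricted form. For the Grassmannian this is classical (the Schubert forms are the Schur polynomials in the curvature, which are nonnegative on complex planes by the work of Griffiths and others); in the general cominuscule setting one needs the analogous statement, and the cleanest route is to realize $\Phi_{w'}$ via the Kostant harmonic representative and check positivity on the model fibre $\tGr(|w'|,\fg_{-1})$ using the $\fg_0$-invariant Hermitian inner product. The forward direction, by contrast, is a formal consequence of Kostant's multiplicity-one decomposition together with Stokes' theorem and the $G$-invariance (hence closedness) of the forms, and requires no positivity.
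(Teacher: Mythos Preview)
The paper does not supply a proof of Theorem~\ref{T:Schur}; it is stated as a result of Bryant \cite{SchurRigid} and Walters \cite{Walters}, with a pointer to \cite[Section~7]{MR2960030} for further discussion. So there is no proof in the paper to compare against.

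That said, your plan is exactly the argument given in those references. Kostant's multiplicity-one decomposition of $\tw^{|w|}\fg_{-1}$ into irreducible $\fg_0$--modules globalizes to a splitting of the homogeneous bundle $\tw^{|w|}T^*X$, and the $G$--invariant (hence harmonic) representative $\Phi_{w'}$ of the Poincar\'e dual of $\xi_{w'}$ lives in the summand corresponding to $\bI_{w'}^*$. The forward direction is, as you say, a formal consequence of this splitting. For the converse, you have correctly isolated the one nontrivial ingredient: the positivity of $\Phi_{w'}$ on complex $|w'|$--planes, which in the cominuscule case is established by Kostant (the harmonic representative is, up to a positive scalar, a sum of squares of absolute values with respect to the invariant Hermitian form on $\fg_{-1}$). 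One small clarification: in your converse paragraph you momentarily write ``$\int_{Y'}\Phi_{w'}=0$ for every subvariety $Y'\subset Y$,'' which is not what you use and is not needed; the positivity argument applies directly to $\int_Y\Phi_{w'}$, since $\Phi_{w'}|_{T_yY}\ge0$ at every smooth point already forces pointwise vanishing from $\int_Y\Phi_{w'}=0$.
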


\begin{remark} \label{R:Schur} 
\emph{A priori}, it may happen that $\cR_w$ admits nontrivial integral manifolds (a differential-geometric property), but no nontrivial integral varieties (an algebraic-geometric property).  That is, it may be the case that $\cR_w$ is flexible, while $\xi_w$ is Schur rigid.  However, we will see \emph{a posteriori} (Corollary \ref{C:SchurFlex}) that $\cR_w$ is flexible if and only if $\xi_w$ is Schur flexible.
\end{remark}

For more on the Schur system see \cite[Section 7]{MR2960030} and the references therein.

%------------------------------------------------------------------------------
\subsection{The Schubert differential system} \label{S:schubert}
%------------------------------------------------------------------------------

Recall the $Z_\tti$--graded decomposition \eqref{E:gi} of $\fg$, and let 
\begin{equation} \nonumber
  G_0 \ := \ \{ g \in G \ | \ \tAd_g(\fg_j) \subset \fg_j\} \, .
\end{equation} 
Then $G_0$ is a closed subgroup of $G$ with Lie algebra $\fg_0$.  Associated to the Schur system is the more restrictive \emph{Schubert system} $\cB_w$ defined by the $G_0$--orbit $B_w \subset \tGr(|w|,T_oX)$ of the $\fg_0$--highest weight line $\fn_w \in R_w$.  In particular, $B_w \subset R_w$.  The corresponding homogeneous bundle $\cB_w \subset \tGr(|w|,TX)$ is precisely the set of $|w|$--planes tangent to (a smooth point of) a Schubert variety of type $w$.  The notions of integrals, rigidity and flexibility for the Schubert system are analogous to those for the Schur system (Section \ref{S:DS}).  Theorem \ref{T:Schur} and $\cB_w \subset \cR_w$ yield

\begin{corollary} \label{C:Schubert}
The homology class $[Y] \subset H_{2|w|}(X,\bZ)$ represented by an integral variety $Y \subset X$ of the Schubert system $\cB_w$ is an integer multiple of the Schubert class $\xi_w$.
\end{corollary}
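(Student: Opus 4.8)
The plan is to observe that Corollary \ref{C:Schubert} is a direct consequence of Theorem \ref{T:Schur} together with the containment $\cB_w \subset \cR_w$ of homogeneous bundles, so the only real work is to confirm that containment and check that the two notions of ``integral variety'' (for the Schubert system and for the Schur system) are compatible.

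First I would recall that $\tGr(|w|,TX)$ is the $G$--homogeneous bundle over $X = G/P$ with fibre $\tGr(|w|,T_oX)$ over $o$. Because $[\fg_1,\fg_{-1}] \subset \fg_0 \subset \fp$, the abelian subalgebra $\fg_1$ acts trivially on $\fg/\fp \simeq T_oX$, so the isotropy action of $P$ on $T_oX$ factors through $G_0$. Consequently the $\fg_0$--invariant subspace $\bI_w \subset \tw^{|w|} T_oX$ is $P$--invariant, hence both $R_w = \tGr(|w|,T_oX)\cap\bP\bI_w$ and the $G_0$--orbit $B_w$ of $\fn_w$ are $P$--invariant subsets of $\tGr(|w|,T_oX)$. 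Thus $\cR_w$ and $\cB_w$ are genuine $G$--homogeneous subbundles of $\tGr(|w|,TX)$, and the fibrewise inclusion $B_w \subset R_w$ (Section \ref{S:schubert}) globalizes to $\cB_w \subset \cR_w$.

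Next I would unwind the definitions: an integral variety $Y$ of $\cB_w$ is, by definition, a subvariety whose smooth locus $Y^0$ has canonical lift $\cY^0 \subset \tGr(|w|,TX)$ contained in $\cB_w$. Since $\cB_w \subset \cR_w$, the same lift satisfies $\cY^0 \subset \cR_w$, so $Y$ is also an integral variety of the Schur system. Theorem \ref{T:Schur} then immediately gives $[Y] \in \bZ\,\xi_w$. As this is essentially a formal implication there is no substantive obstacle; the only point deserving a word of care is the reduction of the $P$--action on $T_oX$ to a $G_0$--action, which is what makes $\cR_w$ (defined via the $\fg_0$--module $\bI_w$) an honest $G$--homogeneous bundle and hence lets the fibrewise containment $B_w \subset R_w$ propagate to the bundle level.
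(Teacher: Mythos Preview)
Your proposal is correct and follows exactly the paper's approach: the paper simply states that Theorem \ref{T:Schur} and $\cB_w \subset \cR_w$ yield the corollary. You have added (correct) detail on why the fibrewise containment $B_w \subset R_w$ globalizes to a containment of $G$--homogeneous bundles, but this is elaboration rather than a different route.
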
  

For more on the Schubert system see \cite[Section 4]{MR2960030} and the references therein.  The following is due to Bryant (in the case that $X$ is a Grassmannian) and Hong (in general).  

\begin{proposition}\label{P:RvB}
The Schur system $\cR_w$ is rigid if and only if $B_w = R_w$ and the Schubert system $\cB_w$ is rigid.  
\end{proposition}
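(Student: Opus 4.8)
The plan is to reduce the statement to a single substantive implication. Begin by recording that $\cB_w$ and $\cR_w$ are both $G$--homogeneous sub-bundles of $\tGr(|w|,TX)$ with fibres $B_w\subseteq R_w$ over $o$, the containment holding because $R_w$ is $G_0$--stable and $B_w$ is the $G_0$--orbit of $\fn_w\in R_w$. Two of the four implications are then formal. If $B_w=R_w$, the two sub-bundles coincide, so the systems have identical integral manifolds and $\cR_w$ is rigid exactly when $\cB_w$ is; this is the direction $(\Leftarrow)$. For $(\Rightarrow)$, rigidity of $\cR_w$ immediately gives rigidity of $\cB_w$, since any connected integral manifold of $\cB_w$ is \emph{a fortiori} one of $\cR_w$, hence lies in a Schubert variety of type $w$. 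So the whole content is the implication: if $\cR_w$ is rigid, then $B_w=R_w$; equivalently, $B_w\subsetneq R_w$ forces $\cR_w$ to be flexible.

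To prove the contrapositive I would pick $E\in R_w\setminus B_w$ and build from it a nontrivial integral manifold, exploiting the abelianness of $\fg_{-1}$ (Remark \ref{R:abelian}). Since $[\fg_{-1},\fg_{-1}]=\{0\}$, for $v\in\fg_{-1}$ the automorphism $\tAd_{\exp(v)}$ restricts to the identity on $\fg_{-1}\simeq T_oX$; consequently the big cell $U=\exp(\fg_{-1})\cdot o\cong\fg_{-1}$ carries a flat trivialization $TU\cong U\times\fg_{-1}$ in which translation by $\exp(\fg_{-1})$ acts trivially on tangent spaces. By $G$--homogeneity, in this trivialization $\cR_w|_U$ is the \emph{constant} sub-bundle $U\times R_w$, and likewise $\cB_w|_U=U\times B_w$. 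Let $M=\exp(E)\cdot o\subseteq U$ be the linear subvariety corresponding to $E$. Because $\fg_{-1}$ is abelian, the Gauss map of $M$ (computed in the flat trivialization) is the constant plane $E\in R_w$, so $M$ is a connected $|w|$--dimensional integral manifold of $\cR_w$.

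It remains to see that $M$ is not contained in any Schubert variety of type $w$. Suppose $M\subseteq Z$ with $Z$ such a variety. Then $Z$ is irreducible of dimension $|w|=\dim M$, so $\overline M=Z$ and the smooth locus of $Z$ meets $M$ in a dense open set; fix a smooth point $p\in M$ of $Z$. Then $T_pZ=T_pM$, which in the flat trivialization of $TU$ is the constant plane $E$. But $T_pZ$ is the tangent plane to a Schubert variety of type $w$ at a smooth point, hence lies in the fibre of $\cB_w$ over $p$, which in the same trivialization is $B_w$; so $E\in B_w$, contradicting the choice of $E$. Thus $M$ is nontrivial and $\cR_w$ is flexible, which completes the proof.

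The step I expect to need the most care is verifying that $M$ really is an integral manifold: a priori the Schur-system fibre $R_p$ varies with $p\in M$, and the point is that in the flat trivialization of the big cell it does not, so that the pointwise condition $E\in R_w$ propagates along all of $M$. This, together with the parallel statement for $\cB_w$, is exactly what the identity $[\fg_{-1},\fg_{-1}]=\{0\}$ buys; the remainder only uses the characterization of the fibres of $\cB_w$ as tangent planes to Schubert varieties of type $w$ at their smooth points.
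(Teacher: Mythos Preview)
Your argument is correct. Note, however, that the paper does not actually prove Proposition~\ref{P:RvB}; it simply attributes the result to Bryant (for Grassmannians) and Hong (in general), so there is no ``paper's own proof'' to compare against.

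That said, your approach is precisely the one used by those authors: the formal implications are disposed of as you do, and the substantive direction ($B_w\subsetneq R_w\Rightarrow\cR_w$ flexible) is handled by exploiting the abelianness of $\fg_{-1}$ to produce, from any $E\in R_w\setminus B_w$, the ``flat'' integral manifold $M=\exp(E)\cdot o$ whose Gauss map is constantly $E$. Your verification that the trivialization of $TU$ over the big cell makes both $\cR_w|_U$ and $\cB_w|_U$ constant is exactly the point, and your nontriviality argument (via $\overline M=Z$ and comparison of tangent planes at a smooth point) is clean. One small remark: to conclude $M\cap Z^{\mathrm{sm}}\neq\emptyset$ you implicitly use that $M=Z\cap U$ is open in $Z$; this holds because $M$, being a linear subspace of $U\cong\fg_{-1}$, is already Zariski closed in $U$, so the closure in $X$ adds only points outside $U$.
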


\begin{remark} \label{R:int}
\begin{a_list}
\item  \emph{A posteriori} the condition that $R_w = B_w$ may be dropped;  it is a consequence of Theorem \ref{T:SchubertFlex} and Corollary \ref{C:SchurFlex} that $\cB_w$ is rigid if and only if $\cR_w$ is rigid.
\item Note that $R_w$ is the intersection of $\tGr(|w|,T_oX)$ with the (projective) linear span of $B_w$ in $\bP(\tw^{|w|}T_oX)$.  In general the containment $B_w \subset R_w$ is strict.  
\end{a_list} 
\end{remark}

The Schubert system lifts to a linear Pfaffian exterior differential system defined on a frame bundle over $X$ \cite[Section 4.4]{MR2960030}.  The lifted system has the advantage that obstructions to rigidity may be identified using Lie algebra cohomology, cf. \cite[Sections 4 and 5]{MR2960030}.  A complete list of the Schubert systems $\cB_w$ for which there exist no cohomological obstructions to rigidity is given by \cite[Theorem 6.1]{MR2960030}: these classes are Schubert rigid.  It is then shown in \cite[Theorem 8.1]{MR2960030} that $R_w = B_w$ for each of these rigid systems.  In particular, the corresponding Schur system $\cR_w$ is rigid.  

The goal of this paper is to prove that these cohomological obstructions are genuine obstructions to the Schubert rigidity of $\xi_w$: that is, given cohomological obstructions there exist nontrivial, algebraic integrals $Y$ of the Schubert system (Theorem \ref{T:SchubertFlex}).  As an integral of $\cB_w$, the variety $Y$ is necessarily an integral of the Schur system $\cR_w$.  In particular, $[Y] = r \xi_w$ for some integer $r > 0$.  

%------------------------------------------------------------------------------
\subsection{The characterization of Schubert varieties} \label{S:aJ}
%------------------------------------------------------------------------------

This section is a brief review of the characterization of Schubert classes $\xi_w$ by an integer $\sfa(w)\ge0$ and a marking $\ttJ(w)$ of the Dynkin diagram.  (The marking is equivalent to a choice of simple roots.)  For more detail see \cite[Section 3.2]{MR2960030}.  Recall \eqref{E:nw}, and let 
$$
  N_w \ = \ \texp(\fn_w) \,.
$$
Then
\begin{equation}\label{E:Xw}
  X_w \ := \ \overline{N_w \cdot o} \ = \ w Y_w
\end{equation}
is a Schubert variety of type $w$ .  (See \cite[Sections 2.3 and 2.4]{MR2960030} for more detail.)  

Let $1 \in W^\fp$ be the identity, and let $w_0 \in W^\fp$ be the longest element.  The $\fg_0$--module $\bI_w$ is trivial if and only if $w \in \{ 1 , w_0\}$.  (The associated Schubert varieties are $X_1 = o$ and $X_{w_0} = X$.)  Assume $w \in W^\fp \backslash \{1,w_0\}$.  Let $\fq_w \subset \fg_0$ be the stabilizer of the highest weight line $\fn_w \in \bP(\tw^{|w|}T_oX)$.  Then there is a subset $\ttJ(w) \subset \{ 1 , \ldots , r \} \backslash\{\tti\}$ with the property that the Lie algebra $\fq_w$ is given by $\fq_w = \fg_{0,\ge0}$, where 
\begin{equation} \label{E:gkl}
  \fg_{k,\ell} \ := \ \{ A \in \fg_k \ | \ [Z_w \,,\, A] = \ell A \} 
  \quad \hbox{ and } \quad Z_w = \sum_{\ttj\in \ttJ(w)} Z_\ttj \, .
\end{equation}
We call $\fg = \oplus \fg_{k,\ell}$ the \emph{$(Z_\tti,Z_w)$--bigraded decomposition} of $\fg$; it is the decomposition of $\fg$ into $(Z_\tti,Z_w)$--eigenspaces.  The following is \cite[Proposition 3.9]{MR2960030}.

\begin{proposition}[{\cite{MR2960030}}]\label{P:aJ}
Let $w \in W^\fp\backslash\{1,w_0\}$.  There exists an integer $\sfa = \sfa(w) \ge 0$ such that $\Delta(w) = \{ \a \in \Delta(\fg_1) \ | \ \a(Z_w) \le \sfa \}$.  Equivalently,
\begin{equation} \label{E:nw_aJ}
  \fn_w \ = \ \fg_{-1,0} \ \op \ \cdots \ \op \ \fg_{-1,-\sfa} \, .
\end{equation}
\end{proposition}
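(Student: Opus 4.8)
The plan is to reduce the proposition to a single combinatorial statement about $\Delta(w)\subset\Delta(\fg_1)$ and then to prove that statement by bookkeeping the definition of $\ttJ(w)$. First observe that the two displayed forms are equivalent. For $\a\in\Delta(\fg_1)$ one has $\a(Z_w)\ge 0$, because $\a\in\Delta^+$ and $Z_w$ is a nonnegative integer combination of the fundamental coweights $Z_\ttj$; hence $\fg_{-1,-\ell}=\bigoplus\{\fg_{-\a}:\a\in\Delta(\fg_1),\ \a(Z_w)=\ell\}$ for $\ell\ge 0$, and \eqref{E:nw_aJ} is precisely the root-space restatement of $\Delta(w)=\{\a\in\Delta(\fg_1):\a(Z_w)\le\sfa\}$. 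Setting $\sfa:=\max\{\a(Z_w):\a\in\Delta(w)\}\ge 0$, the inclusion $\Delta(w)\subseteq\{\a:\a(Z_w)\le\sfa\}$ is immediate, so the whole content is the reverse inclusion.

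For that I would record two structural facts. (i) By Remark \ref{R:Dw} (together with the observation that, for $\Phi\subset\Delta(\fg_1)$, closedness of $\Delta^+\setminus\Phi$ amounts to $\Phi$ being a lower set, since $\fg_1$ is abelian), the $\Delta(v)$ with $v\in W^\fp$ are exactly the lower sets of the poset $\Pi:=\Delta(\fg_1)$ ordered by $\a\preceq\b\Leftrightarrow\b-\a\in\bZ_{\ge 0}\langle\a_j:j\ne\tti\rangle$; in particular $\Delta(w)$ is a lower set, and $\min\Pi=\a_\tti\in\Delta(w)$. (ii) Since $\fq_w=\fg_{0,\ge 0}$ is the \emph{full} stabilizer of $\fn_w$, every cover $\a\lessdot\b$ of $\Pi$ with $\a\in\Delta(w)$ and $\b\notin\Delta(w)$ has $j\in\ttJ(w)$, where $\a_j=\b-\a$: otherwise $\fg_{-\a_j}\subset\fg_{0,0}\subset\fq_w$, yet $[\fg_{-\a_j},\fg_{-\a}]=\fg_{-\b}\ne 0$ (a root-string bracket) while $\fg_{-\b}\not\subset\fn_w$, contradicting $\fq_w$-invariance of $\fn_w$. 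Finally, since $Z_w=\sum_{\ttj\in\ttJ(w)}Z_\ttj$, the map $\a\mapsto\a(Z_w)$ on $\Pi$ is order-preserving and jumps by $[j\in\ttJ(w)]$ across a cover adjoining $\a_j$; in particular $\a_\tti(Z_w)=0$.

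The reverse inclusion now follows from the \emph{key claim}: every minimal element $\a_0$ of $\Pi\setminus\Delta(w)$ has $\a_0(Z_w)=\sfa+1$. Indeed, were there $\a\in\Delta(\fg_1)\setminus\Delta(w)$ with $\a(Z_w)\le\sfa$, picking $\a_0\preceq\a$ minimal in $\Pi\setminus\Delta(w)$ would give $\a(Z_w)\ge\a_0(Z_w)=\sfa+1>\sfa$, contradicting $\a(Z_w)\le\sfa$. For the key claim: by minimality all lower covers of $\a_0$ lie in $\Delta(w)$, and by (ii) each such covering edge is adjoined by a simple root in $\ttJ(w)$, so all lower covers of $\a_0$ share the $Z_w$-value $\a_0(Z_w)-1$. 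What remains is to see that this common value is $\sfa$ for \emph{every} such $\a_0$ --- equivalently, that all maximal elements of the lower set $\Delta(w)$ lie at the single level $\sfa$.

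That last point is the main obstacle, and it is exactly where cominuscularity is used: for a general $G/P$ the marking in \eqref{E:gkl} need not be available, and the ``boundary at constant level'' property fails. I would establish it by induction on $|w|$ along the Hasse poset $W^\fp$. A cover $w\lessdot w'$ enlarges the lower set $\Delta(w)$ to $\Delta(w')$ by a single element $\gamma\in\Pi$, which is a minimal element of $\Pi\setminus\Delta(w)$; absorbing $\gamma$ removes the edges into $\gamma$ from the ``boundary'' of the ideal and adds the edges out of $\gamma$, and this determines $\ttJ(w')$ in terms of $\ttJ(w)$, hence the change in the level function and in $\sfa$, so that one can check the new maximal elements of $\Delta(w')$ again land at one level. (Alternatively one can verify the claim family by family from the rectangle / shifted-staircase descriptions of $\Pi$, $\Delta(w)$ and $\ttJ(w)$ underlying Lemmas \ref{L:part}, \ref{L:Cpart}, \ref{L:Dpart}, plus direct inspection of the two exceptional posets.) Granting the key claim, $\Delta(w)=\{\a\in\Delta(\fg_1):\a(Z_w)\le\sfa\}$, which by the first paragraph yields both displayed forms.
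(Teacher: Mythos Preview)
The paper does not supply its own proof of this proposition; it is quoted verbatim as \cite[Proposition~3.9]{MR2960030} and used as input. So there is no argument in the paper to compare against, and your proposal is an attempt to reprove a cited result.

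Your reduction is clean and correct: the two displayed formulations are equivalent, $\Delta(w)$ is a lower set of $\Pi=\Delta(\fg_1)$ (your check via Remark~\ref{R:Dw} is fine), and the stabilizer condition $\fq_w=\fg_{0,\ge0}$ indeed forces every covering edge crossing the boundary of $\Delta(w)$ to be labeled by a simple root in $\ttJ(w)$. So the entire content really is your ``key claim'': all minimal elements of $\Pi\setminus\Delta(w)$ sit at a common $Z_w$--level.

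The gap is that you do not actually prove this claim. The induction you sketch is not a proof as written: when you pass from $w$ to a cover $w'$, the set $\ttJ$ changes, hence the grading element $Z_{w'}$ differs from $Z_w$, and the level function $\a\mapsto\a(Z_{w'})$ is altered on \emph{all} of $\Pi$, not merely near the absorbed root $\gamma$. Your sentence ``this determines $\ttJ(w')$ in terms of $\ttJ(w)$, hence the change in the level function and in $\sfa$, so that one can check the new maximal elements of $\Delta(w')$ again land at one level'' is exactly the assertion to be proved for $w'$; no mechanism is given for why the single-level property propagates under this global relabeling. (Concretely: you would need to show that the new maximal elements, which include some old maximal elements together with $\gamma$, all acquire the same $Z_{w'}$--value after the shift, and this requires knowing how the coefficients of the removed/added simple roots distribute across those elements.) Your fallback---case-by-case verification from the explicit poset descriptions in the appendix plus the two exceptional diagrams---would work, but that is essentially how such statements are established in \cite{MR2960030}, so it is not an independent argument. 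If you want a self-contained proof, you must either carry out the induction with a precise bookkeeping of how $\ttJ$ and the levels change (this is doable but nontrivial), or invoke a structural property of cominuscule root posets (e.g.\ their $d$--complete/jeu-de-taquin structure) that forces the boundary of any order ideal to be ``flat'' with respect to the induced grading.
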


\begin{remark}  
\begin{a_list}
\item 
Since $\xi_w = [X_w]$, and $X_w$ is determined by $\Delta(w)$, the pair $\sfa(w), \ttJ(w)$ characterizes $\xi_w$, where $w \in W^\fp\backslash\{1,w_0\}$.
\item
By \cite[Proposition 3.19]{MR2960030}, the Schubert variety $X_w$ is smooth if and only if $\sfa(w)=0$.  For more on the relationship between the integer $\sfa(w)$ and $\tSing(X_w)$, see \cite{sing}.  
\item
A tableau-esque analog of Proposition \ref{P:aJ} is given by H. Thomas and A. Yong in \cite[Proposition 2.1]{MR2538022}.
\end{a_list}
\end{remark}

A complete list of the $\sfa(w)$, $\ttJ(w)$ that occur is given by \cite[Corollary 3.17]{MR2960030}.   For each of the classical, cominuscule varieties, the relationship between $\sfa(w), \ttJ(w)$ and the familiar geometric descriptions of $Y_w$ is reviewed in the appendix.

%------------------------------------------------------------------------------
\subsection{Divisors} %\label{S:lemmas}
%------------------------------------------------------------------------------

In this section we establish a lemma that will be used in the proof of Theorem \ref{T:SchubertFlex}.  Recall the $(Z_\tti,Z_w)$--bigraded decomposition \eqref{E:gkl} of $\fg$, and note that $\fg_{0,0}$ is a reductive subalgebra of $\fg$.

\begin{definition} %\label{D:Pi(U)}
Given a representation $U$ of $\fg_{0,0}$, let $\Pi(U)$ denote the set of highest weights.
\end{definition}

Given $\a \in \Delta^+$, let $r_\a \in W$ denote the corresponding reflection.

\begin{lemma} \label{L:divisor}
Let $\a \in \Delta(w)$.  Then $\Delta(w) = \Delta(w') \sqcup \{\a\}$ for some $w' \in W^\fp$ if and only if $\a \in \Pi(\fg_{1,\sfa})$.  In this case $w' = r_\a w$. 
\end{lemma}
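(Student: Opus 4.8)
The plan is to translate the statement, via Remark \ref{R:Dw}, into a combinatorial fact about closed subsets of $\Delta^+$ and the root $\a$, and then to use the bigraded description of $\fn_w$ from Proposition \ref{P:aJ} to pin down exactly which $\a$ can be removed. First I would recall that by Remark \ref{R:Dw} a subset $\Phi \subset \Delta(\fg_1)$ equals $\Delta(w')$ for some $w' \in W^\fp$ precisely when $\Delta^+ \setminus \Phi$ is closed (i.e.\ if $\b,\c \in \Delta^+\setminus\Phi$ and $\b+\c \in \Delta^+$ then $\b+\c \in \Delta^+\setminus\Phi$). So, writing $\Phi = \Delta(w)\setminus\{\a\}$, the condition ``$\Delta(w) = \Delta(w') \sqcup \{\a\}$ for some $w'$'' is equivalent to ``$(\Delta^+\setminus\Delta(w)) \cup \{\a\}$ is closed.'' Since $\Delta^+\setminus\Delta(w)$ is already closed (because $\Delta(w)$ itself is $\Delta(w')$-type for $w$), adding $\a$ back in keeps closedness iff for every $\b \in \Delta^+\setminus\Delta(w)$ with $\a + \b \in \Delta^+$ one has $\a+\b \notin \Delta(w)$ — equivalently, $\a$ is not of the form $\c - \b$ with $\c \in \Delta(w)$, $\b \in \Delta^+\setminus\Delta(w)$. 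That last reformulation is exactly the statement that $\a$ is a highest weight vector for the action of $\fg_{0,0}$ on $\fg_{1,\sfa}$.

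To make that precise, I would use Proposition \ref{P:aJ}: $\Delta(w) = \{\a \in \Delta(\fg_1) : \a(Z_w)\le \sfa\}$, so $\Delta(w)$ is the union of the ``slabs'' $\Delta(\fg_{1,0}),\ldots,\Delta(\fg_{1,-\sfa})$ in the $(Z_\tti,Z_w)$-bigrading, while $\Delta^+\setminus\Delta(w)$ contains $\Delta(\fg_{1,\ell})$ for $\ell > \sfa$ together with all of $\Delta(\fg_{0,\ge 1})$. Raising operators in $\fg_{0,0}$ come from positive roots $\b$ with $\b(Z_\tti) = 0$ and $\b(Z_w) > 0$; these lie in $\Delta^+\setminus\Delta(w)$. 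Now the key observation: if $\a \in \Delta(w)$ has $\a(Z_w) < \sfa$, then there is some positive root $\b$ with $\b(Z_\tti)=0$, $\b(Z_w)=1$ and $\a+\b \in \Delta$ (this uses that $\fg_{0,0}$ acts on the abelian nilradical $\fg_1$ and the weights in each $\fg_{1,\ell}$ form a single $\fg_{0,0}$-string pattern, or more elementarily a standard $\fsl_2$-triple argument for a simple root $\a_\ttj$, $\ttj \in \ttJ(w)$, with $\a(Z_{\a_\ttj}) < \ldots$); then $\a + \b \in \Delta(\fg_{1,\ell+1}) \subset \Delta(w)$, so $(\Delta^+\setminus\Delta(w))\cup\{\a\}$ fails to be closed. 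Conversely, if $\a(Z_w) = \sfa$ but $\a$ is \emph{not} a highest weight of $\fg_{1,\sfa}$, then some raising operator $\b \in \Delta(\fg_{0,0}$-positive$)$ sends $\a$ to $\a+\b \in \Delta(\fg_{1,\sfa}) \subset \Delta(w)$, again breaking closedness; while if $\a$ \emph{is} a highest weight of $\fg_{1,\sfa}$, then $\a + \b \notin \Delta$ for every such $\b$, and also $\a + \b \notin \Delta^+$ for $\b \in \Delta(\fg_{1,\ell})$ with $\ell \ge 1$ since $\fg_1$ is abelian (Remark \ref{R:abelian}) and $\b(Z_\tti) \ge 0$ forces $(\a+\b)(Z_\tti) \ge 2$ which is not a weight of $\fg$; checking the remaining cases $\b \in \fg_{0,\ell\ge 1}$ with $\b$ not a root of $\fg_{0,0}$ needs that adding such $\b$ to $\a$ either leaves $\Delta(w)$ or leaves $\Delta$ entirely. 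Assembling these cases shows $(\Delta^+\setminus\Delta(w))\cup\{\a\}$ is closed iff $\a \in \Pi(\fg_{1,\sfa})$, which by Remark \ref{R:Dw} gives the first assertion.

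Finally, for the identification $w' = r_\a w$: once we know $\Delta(w') = \Delta(w)\setminus\{\a\}$ with $\a \in \Delta(w) = w\Delta^- \cap \Delta^+$, I would write $\a = w(-\b)$ for a unique $\b \in \Delta^+$ and observe that $\a$ being an extreme (highest) weight of the $\fg_{0,0}$-module $\fg_{1,\sfa}$ forces $\b$ to be a simple root for the ordering determined by $w^{-1}$, i.e.\ $\ell(r_\a w) = \ell(w) - 1$ and $r_\a w \in W^\fp$; then $\Delta(r_\a w) = (r_\a w)\Delta^- \cap \Delta^+$ is computed directly to be $\Delta(w)\setminus\{\a\}$, and since the map $w'\mapsto\Delta(w')$ is injective on $W^\fp$ we conclude $w' = r_\a w$. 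The main obstacle I anticipate is the careful case analysis in the middle paragraph — in particular verifying that when $\a$ is a highest weight of $\fg_{1,\sfa}$ there is genuinely \emph{no} root $\c \in \Delta(w)$ and $\b \in \Delta^+\setminus\Delta(w)$ with $\c = \a + \b$; this requires using both the abelianness of $\fg_1$ and the precise bigraded shape \eqref{E:nw_aJ} of $\fn_w$, rather than any single slick argument. If needed I would also invoke \cite[Section 2.3]{MR2960030} for the dictionary between closed subsets and elements of $W^\fp$ to shorten the combinatorics.
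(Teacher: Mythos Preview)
Your strategy is the paper's: reduce via Remark~\ref{R:Dw} to showing that $\{\a\}\cup(\Delta^+\setminus\Delta(w))$ is closed iff $\a\in\Pi(\fg_{1,\sfa})$, then run a case analysis on $\b\in\Delta^+\setminus\Delta(w)=\Delta^+(\fg_0)\cup\Delta(\fg_{1,>\sfa})$ using the bigrading of Proposition~\ref{P:aJ}; the paper handles the identification $w'=r_\a w$ by citing \cite[Proposition~3.2.15(3)]{MR2532439} rather than arguing directly.

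Two details in your write-up need correction. First, positive roots of $\fg_{0,0}$ satisfy $\b(Z_w)=0$, not $\b(Z_w)>0$; your sentence describing ``raising operators in $\fg_{0,0}$'' has this backwards. Second, in the case $\a(Z_w)<\sfa$ you assert the existence of $\b$ with $\b(Z_w)=1$ and $\a+\b\in\Delta$, but this is not obviously available when $\a$ is not already $\fg_{0,0}$-highest, and your parenthetical justification (``string pattern'', truncated $\fsl_2$ argument) does not establish it. The clean fix, which also covers your next case simultaneously, is to invoke the irreducibility of $\fg_1$ as a $\fg_0$-module (this is where cominusculity enters): if $\a\in\Delta(\fg_{1,\ell})$ with $\ell\le\sfa$ is not the highest root of $\fg$, then some simple root $\b$ of $\fg_0$ satisfies $\a+\b\in\Delta$; since $\b(Z_w)\in\{0,1\}$ one has $(\a+\b)(Z_w)\le\sfa$ unless $\ell=\sfa$ and $\b(Z_w)=1$, and in every remaining case $\a+\b\in\Delta(w)$ breaks closedness. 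This leaves exactly the situation $\ell=\sfa$ with $\a+\b\notin\Delta$ for all simple $\b$ of $\fg_{0,0}$, i.e.\ $\a\in\Pi(\fg_{1,\sfa})$. (Incidentally, this step---ruling out $\a(Z_w)<\sfa$---is something the paper's own proof elides; your instinct to treat it explicitly is correct.)
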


\begin{proof}
By Remark \ref{R:Dw},  there exists $w' \in W^\fp$ such that $\Delta(w)\backslash\{\a\} = \Delta(w')$ if and only if $\Phi = \{\a\} \,\cup\, ( \Delta^+ \backslash \Delta(w))$ is closed.  So, it suffices to show that $\a \in \Pi(\fg_{1,\sfa})$ if and only if $\Phi$ is closed.

Assume $\a \in \Pi(\fg_{1,\sfa})$.  Let $\mu,\nu \in \Phi$, and suppose that $\mu+\nu \in \Delta$.  To see that $\Phi$ is closed, we must show that $\mu+\nu \in \Phi$.  First, note that either $\mu\not=\a$ or $\nu\not=\a$, as $2\a \not\in\Delta$.  Second, if both $\m,\n \in \Delta^+\backslash\Delta(w)$, then $\mu+\nu \in \Delta^+ \backslash\Delta(w) \subset \Phi$, by Remark \ref{R:Dw}.  It remains to consider the case that $\m = \a$ and $\nu \in \Delta^+\backslash\Delta(w)$.  Remark \ref{R:abelian}, and the assumption that $\mu+\nu$ is a root, force $\n \in \Delta^+(\fg_0)$.  Note that $\n(Z_w) \ge 0$.  If $\n(Z_w) = 0$, then $\n$ is a positive root of $\fg_{0,0}$, and the hypothesis that $\a$ is a highest $\fg_{0,0}$--weight implies $\m+\n \not\in\Delta$.  If $\n(Z_w) >0$, then $(\mu+\nu)(Z_w) > \sfa$.  By Proposition \ref{P:aJ}, we have $\Delta^+\backslash\Delta(w) = \Delta^+(\fg_0) \,\cup\, \Delta(\fg_{1,>\sfa})$; in particular, $\mu+\n \in \Delta^+\backslash\Delta(w) \subset \Phi$.

Conversely, suppose that $\Phi$ is closed.  To see that $\a$ is a highest $\fg_{0,0}$--weight, it suffices to show that $\a+\b \not\in\Delta$ for all $\b \in \Sigma(\fg_{0,0})$.  Note that $(\a+\b)(Z_w) = \sfa+0$.  So, if $\a+\b \in\Delta$, then $\a+\b \in \Delta(\fg_{1,\sfa})$.  By Proposition \ref{P:aJ}, $\Delta(\fg_{1,\sfa}) \subset \Delta(w)$.  Therefore, $\a+\b\in\Delta^+\backslash\Phi$; since $\a,\b \in \Phi$, this contradicts the assumption that $\Phi$ is closed.

It remains to show that $w' = r_\a w$.  This is \cite[Proposition 3.2.15(3)]{MR2532439}.
\end{proof}

%------------------------------------------------------------------------------
\section{Flexibility} \label{S:flex}
%------------------------------------------------------------------------------

In \cite[Section 4]{MR2960030} we saw that a Lie algebra cohomology $H^1(\fn_w,\fg_w^\perp)$ group is naturally associated with the Schubert system $\cB_w$.  The cohomology group contains two distinguished subspaces $H^1_{1,\sfa-1}$ and $H^1_{2,2\sfa-1}$.  When the subspaces are trivial, the Schubert system is necessarily rigid, \cite[Theorem 5.38]{MR2960030}.  The two subspaces are trivial precisely when two representation theoretic conditions, $\sfH_1 = \sfH_1(w)$ and $\sfH_2 = \sfH_2(w)$, are satisfied (Definition \ref{D:H1H2}).  When both conditions hold, we say that $\sfH_+$ is satisfied.  In summary: define
$$
  \cO_w \ = \ H^1_{1,\sfa-1}\,\op\,H^1_{2,2\sfa-1} \,;
$$
then 
\begin{center}
  $\cO_w = 0$ if and only if $\sfH_+$ holds. 
\end{center}

\begin{theorem} \label{T:SchubertFlex}
Let $X = G/P$ be a cominuscule variety, and fix $w\in W^\fp\backslash\{1,w_0\}$.  The Schubert system $\cB_w$ is rigid if and only if $\cO_w=0$.  Moreover, when $\cO_w\not=0$, there exist nontrivial algebraic integrals; in particular, the Schubert class $\xi_w$ is Schubert rigid if and only if $\cO_w=0$.
\end{theorem}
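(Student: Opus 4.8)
The plan is to prove the two directions separately. The ``only if'' direction (if $\cO_w = 0$ then $\cB_w$ is rigid) is already established in \cite{MR2960030}: by \cite[Theorem 5.38]{MR2960030}, the vanishing of $H^1_{1,\sfa-1}$ and $H^1_{2,2\sfa-1}$ kills the relevant prolongation/torsion obstructions in the linear Pfaffian system, so every connected integral manifold of $\cB_w$ lies in a Schubert variety of type $w$, and the conditions $\sfH_1, \sfH_2$ detect exactly this vanishing. So the real content is the converse: assuming $\cO_w \neq 0$, construct a nontrivial algebraic integral $Y$ of $\cB_w$. By Corollary \ref{C:Schubert}, any such $Y$ automatically satisfies $[Y] \in \bZ \xi_w$, and by construction it will not be a $G$--translate of $Y_w$, which is exactly Schubert flexibility (and, via $\cB_w \subset \cR_w$, Schur flexibility — this is Corollary \ref{C:SchurFlex}).

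The construction should proceed by exhibiting $Y$ as the closure of a $\texp$ of a suitable nilpotent subalgebra, a deformation of $X_w = \overline{N_w \cdot o}$. Concretely: since $\cO_w \neq 0$, at least one of the spaces $H^1_{1,\sfa-1}$, $H^1_{2,2\sfa-1}$ is nonzero; a nonzero class there should, by Kostant-type representation theory (the cohomology $H^1(\fn_w, \fg_w^\perp)$ is a $\fg_{0,0}$--module whose components are indexed by reflections), produce a root $\b$ (or a pair of roots) outside $\Delta(w)$ such that replacing one ``boundary'' root space $\fg_{-\a}$ in $\fn_w$, with $\a \in \Pi(\fg_{1,\sfa})$ as in Lemma \ref{L:divisor}, by a generic line in a two-dimensional space spanned by $\fg_{-\a}$ and $\fg_{-\b}$ yields a new abelian (by Remark \ref{R:abelian}, all of $\fg_{-1}$ is abelian) subalgebra $\fn' \subset \fg_{-1}$ of the same dimension $|w|$. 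I would then set $Y = \overline{G_0' \cdot \texp(\fn')\cdot o}$ for an appropriate subgroup, or more simply take the $\fg_{0,0}$--orbit closure of $\texp(\fn')\cdot o$. The two bracketing relations $[\fg_{1,\sfa},\fg_{0,0}] \subset \fg_{1,\sfa}$ and the grading $[\fg_k, \fg_\ell]\subset\fg_{k+\ell}$ should be what makes the tangent spaces of this orbit land in $B_w$ fibrewise — i.e. each tangent $|w|$--plane is $G_0$--conjugate to $\fn_w$ — so that $Y$ is genuinely an integral of $\cB_w$ and not merely of $\cR_w$. That $Y$ is not a Schubert variety of type $w$ follows because $\fn'$ is not $\fg_0$--conjugate to $\fn_w$: it is not a highest weight line, as the perturbation by $\fg_{-\b}$ breaks the $\fq_w = \fg_{0,\ge 0}$--stability.

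The two cases $H^1_{1,\sfa-1}\neq 0$ and $H^1_{2,2\sfa-1}\neq 0$ should be handled by the same mechanism but with different combinatorics: in the first case the perturbing root $\b$ has $\b(Z_w) = \sfa - 1$ (a ``degree one'' perturbation), while in the second the obstruction is quadratic and one perturbs using a root with $\b(Z_w) = 2\sfa - 1$, which forces a more delicate choice so that $\fn'$ remains closed under the (trivial) bracket and so that the Schubert-integral condition survives at second order; the negation of $\sfH_2$ (Definition \ref{D:H1H2}) is precisely what guarantees the needed root $\b$ exists. The main obstacle, I expect, is \emph{verifying the integral condition}: showing that every tangent plane to $Y$ — not just at the base point $o$, but at every smooth point, including the ``new'' points introduced by the deformation — is $G_0$--equivalent to $\fn_w$, so that $\cY \subset \cB_w$. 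This requires controlling how the tangent planes vary along $Y$, which is where the precise homogeneity of the construction (choosing $Y$ to be an orbit closure of a group containing $\texp(\fg_{0,0})$ and $\texp(\fn')$, rather than an ad hoc variety) earns its keep, and where one must check that the perturbed family $\{\tAd_g \fn'\}$ stays inside the single $G_0$--orbit $B_w$ rather than wandering into the larger $R_w \supsetneq B_w$. A secondary technical point is irreducibility of $Y$, which is automatic for an orbit closure, and the verification that $[Y] = r\xi_w$ with $r > 0$, which is immediate from Corollary \ref{C:Schubert} once the integral condition is in hand.
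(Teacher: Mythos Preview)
Your high-level plan---use Lemma \ref{L:divisor} to peel off a divisor $X_{w'}\subset X_w$, then perturb $\fn_w = \fn_{w'}\oplus\fg_{-\gamma}$ using a vector singled out by the nonvanishing cohomology---matches the paper's strategy. But there is a concrete misstep that would make your construction fail.

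You place the perturbing root vector inside $\fg_{-1}$, writing $\fn'\subset\fg_{-1}$ abelian, and you read the subscripts on $H^1_{1,\sfa-1}$, $H^1_{2,2\sfa-1}$ as $Z_w$--values of a root in $\Delta(\fg_1)$. That is not what those subscripts mean. In the paper's construction the perturbation lies in $\fg_w^\perp\cap\fg_{\ge0}$: when $\sfH_1$ fails one takes $B\in\fg_{-\beta}\subset\fg_{0,-1}$ and sets $\fn'=\fn_{w'}\oplus\langle C+B\rangle$; when $\sfH_2$ fails one takes $E\in\fg_{\varepsilon}\subset\fg_{1,\sfa-1}$ and works with $A=\exp\langle C+E\rangle$ acting on $X_{w'}$. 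In both cases the $\fg_{-1}$--projection of $\fn'$ is exactly $\fn_w$, so the tangent plane to $Y$ at $o$ is $\fn_w$ itself---the integral condition is trivially satisfied there---and nontriviality arises only away from $o$. By contrast, a perturbation staying inside $\fg_{-1}$ cannot work: if your abelian $\fn'\subset\fg_{-1}$ lies in $B_w$ then $\fn'=\tAd_g\fn_w$ for some $g\in G_0$ and $\overline{\exp(\fn')\cdot o}=g\cdot X_w$ is a Schubert variety of type $w$; if $\fn'\notin B_w$ then $\exp(\fn')\cdot o$ is not an integral of $\cB_w$ at $o$.

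The second point you flag as the ``main obstacle''---verifying the integral condition away from $o$---is indeed the crux, but your proposed mechanism (checking $G_0$--conjugacy of tangent planes directly) is not how the paper proceeds. The paper recasts the problem via the Maurer--Cartan form on a submanifold $U\subset G$ (Lemmas \ref{L:hrm}--\ref{L:flex}): the Schubert integral condition becomes $\vartheta_{\fn_w^\perp}=0$ together with $\vartheta_{0,-}=\lambda(\vartheta_{\fn_w})$ for some $\lambda$ valued in the harmonic space $\cH^1_1$, and nontriviality is exactly $\lambda\not\equiv0$ (or, in the $\sfH_2$ case, $\lambda\equiv0$ and $\mu\not\equiv0$ with $\mu$ valued in $\cH^1_2$). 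The perturbations $B\in\fg_{0,-1}$ and $E\in\fg_{1,\sfa-1}$ are chosen precisely so that the resulting constant maps $\lambda(C)=B$, $\mu(C)=E$ land in $\cH^1_1$, $\cH^1_2$ respectively; this is the content of \cite[Lemmas 5.28 and 5.39]{MR2960030}. Note also that in the $\sfH_2$ case $\fa\oplus\fn_{w'}$ is \emph{not} a subalgebra, so one cannot simply exponentiate; a direct Maurer--Cartan computation on $U=A_0\cdot N_0$ is required.
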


\begin{remark}[Geometric interpretation]
Recall (Section \ref{S:schubert}) that an irreducible variety $Y \subset X$ is an integral of the Schubert system $\cB_w$ if and only if at each smooth point $y \in Y$ there exists a Schubert variety $Y' = Y'(y)$ of type $w$, such that $y$ is also a smooth point of $Y'$, and $T_yY = T_{y} Y'$.  Equivalently, if $\cY$ denotes the canonical lift of the smooth locus $Y_0 \subset Y$ to $\tGr(|w|,TX)$, then $\cY$ and $\cY'$ intersect at $\tilde y = T_y Y \in \tGr(|w|,TX)$.  If the condition $\sfH_1$ holds, then $\cY$ and $\cY'$ are necessarily tangent at $\tilde y$; and when both conditions $\sfH_1$ and $\sfH_2$ hold, $\cY$ and $\cY'$ agree to second-order at $\tilde y$.
\end{remark}

\begin{corollary} \label{C:SchurFlex}
Let $w \in W^\fp\backslash\{1,w_0\}$.  The Schur $\cR_w$ system is rigid if and only if $\cO_w=0$.  Moreover, when $\cO_w\not=0$, there exist nontrivial algebraic integrals; in particular, the Schubert class $\xi_w$ is Schur rigid if and only if $\cO_w=0$.
\end{corollary}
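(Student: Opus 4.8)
Corollary \ref{C:SchurFlex} is essentially a packaging of Theorem \ref{T:SchubertFlex} together with the relationship between the Schubert system $\cB_w$ and the Schur system $\cR_w$, so the plan is to deduce it directly rather than to redo the construction. Recall the containment $\cB_w\subset\cR_w$ of subbundles of $\tGr(|w|,TX)$. The forward implication is the easy direction: if $\cO_w=0$, then by \cite[Theorem 6.1]{MR2960030} the class $\xi_w$ satisfies $\sfH_+$, and by \cite[Theorem 8.1]{MR2960030} we have $R_w=B_w$, so $\cB_w=\cR_w$; combined with the rigidity half of Theorem \ref{T:SchubertFlex} (which gives that $\cB_w$ is rigid when $\cO_w=0$), Proposition \ref{P:RvB} yields that $\cR_w$ is rigid. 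One should also remark that this settles Remark \ref{R:Schur} and Remark \ref{R:int}(a) \emph{a posteriori}.

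For the contrapositive — the new content — suppose $\cO_w\neq0$. The key observation is that the nontrivial algebraic integrals $Y$ produced in the proof of Theorem \ref{T:SchubertFlex} are integrals of the \emph{Schubert} system $\cB_w$, and since $\cB_w\subset\cR_w$, any integral variety of $\cB_w$ is automatically an integral variety of $\cR_w$ (this is exactly Corollary \ref{C:Schubert}, which records that $[Y]\in\bZ\xi_w$). Thus $Y$ is an irreducible integral variety of the Schur system $\cR_w$. It remains to check that $Y$ is nontrivial as a Schur integral, i.e. that $Y$ is not a Schubert variety of type $w$: but this is precisely the nontriviality already established in Theorem \ref{T:SchubertFlex}, since being a Schubert variety of type $w$ has the same meaning for both systems (the trivial integrals of $\cR_w$ and of $\cB_w$ coincide — they are the $G$-translates of $Y_w$). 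Hence $\cR_w$ is flexible, and moreover $\xi_w$ is Schur flexible because it admits the irreducible non-Schubert representative $Y$ with $[Y]\in\bZ\xi_w$.

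The only subtlety to address carefully is that flexibility of $\cR_w$ as a differential system (existence of nontrivial integral \emph{manifolds}) is formally weaker than Schur flexibility of $\xi_w$ (existence of nontrivial integral \emph{varieties}), cf. Remark \ref{R:Schur}; but because Theorem \ref{T:SchubertFlex} delivers an algebraic $Y$, both conclusions hold simultaneously here, and in particular the \emph{a posteriori} identification $\cB_w=\cR_w$ shows the two notions of flexibility agree. I would phrase the proof in two or three sentences along these lines, invoking Theorem \ref{T:SchubertFlex}, Corollary \ref{C:Schubert}, Proposition \ref{P:RvB}, and \cite[Theorems 6.1 and 8.1]{MR2960030}. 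I do not expect any genuine obstacle: all the work is in Theorem \ref{T:SchubertFlex}, and the corollary is a formal consequence of $\cB_w\subset\cR_w$ plus the fact that the construction in that theorem is algebraic.
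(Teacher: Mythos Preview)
Your argument is correct and matches the paper's proof: invoke \cite[Theorem 8.1]{MR2960030} (so $B_w=R_w$ when $\cO_w=0$), then apply Proposition~\ref{P:RvB} and Theorem~\ref{T:SchubertFlex}; for $\cO_w\neq0$, the nontrivial algebraic integral of $\cB_w$ from Theorem~\ref{T:SchubertFlex} is automatically a nontrivial algebraic integral of $\cR_w$ via $\cB_w\subset\cR_w$. One small slip in your final paragraph: there is no \emph{a posteriori} identification $\cB_w=\cR_w$ in general (Remark~\ref{R:int}(b) says the containment is typically strict); what is established \emph{a posteriori} is only that $\cB_w$ is rigid iff $\cR_w$ is rigid, and your argument does not actually need more than $\cB_w\subset\cR_w$.
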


\begin{proof}
By \cite[Theorem 8.1]{MR2960030} if $\cO_w=0$, then $B_w = R_w$.  The corollary then follows from  Proposition \ref{P:RvB} and Theorem \ref{T:SchubertFlex}.
\end{proof}

\begin{remark}
A complete list of the Schubert varieties (in an irreducible cominuscule $X$) satisfying $\sfH_+$ is given by \cite[Theorem 6.1]{MR2960030}.
\end{remark}

\begin{corollary}[Poincar\'e duality]\label{C:PD}
A Schubert class $\xi_w$ is Schur rigid if and only if its Poincar\'e dual $\xi_{w^*}$ is Schur rigid.
\end{corollary}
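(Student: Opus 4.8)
The plan is to combine the characterization ``$\xi_w$ is Schur rigid $\iff \cO_w = 0$'' from Corollary \ref{C:SchurFlex} with an explicit understanding of how the data $\sfa(w)$, $\ttJ(w)$ transforms under Poincar\'e duality. Since $\cO_w$ is defined purely in terms of Lie algebra cohomology built from $\fn_w$, and $\fn_w$ is determined by $\sfa(w)$ and $\ttJ(w)$ via Proposition \ref{P:aJ} (equation \eqref{E:nw_aJ}), it suffices to show that $\cO_w = 0 \iff \cO_{w^*} = 0$; and for this it is enough to identify $\sfa(w^*)$ and $\ttJ(w^*)$ in terms of $\sfa(w)$ and $\ttJ(w)$ and observe that the defining representation-theoretic conditions $\sfH_1$, $\sfH_2$ of Definition \ref{D:H1H2} are symmetric under this identification.

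First I would recall the combinatorics of Poincar\'e duality on the Hasse poset $W^\fp$: the dual class $\xi_{w^*}$ is represented by the Schubert variety whose defining set of roots is the complement, $\Delta(w^*) = \Delta(\fg_1) \setminus \big(w_0^\fp \cdot \Delta(w)\big)$ where $w_0^\fp$ is the longest element of $W_\fp$ (equivalently, $\Delta(w^*)$ is obtained from the complement $\Delta(\fg_1)\setminus\Delta(w)$ by the longest-element symmetry of the cominuscule poset $\Delta(\fg_1)$). Under the $(Z_\tti, Z_w)$-bigrading, the longest element of $W_{\fg_0}$ acts on the ``level sets'' $\fg_{1,\ell}$ of Proposition \ref{P:aJ} by reversing them; tracking this shows that $\ttJ(w^*) = \ttJ(w)$ (the marking is preserved — the complement uses the same grading element $Z_w$) and that $\sfa(w^*) = \sfa_{\max} - 1 - \sfa(w)$, where $\sfa_{\max}$ is the top $Z_w$-eigenvalue occurring in $\fg_1$. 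The key structural point is then that the decomposition $\fg_1 = \bigoplus_\ell \fg_{1,\ell}$ and the reductive piece $\fg_{0,0}$ are left invariant (up to the level-reversing symmetry) by this duality, so that $\fg_{1,\sfa(w)}$ corresponds to $\fg_{1,\sfa(w^*)}$, $\fg_{1,>\sfa(w)}$ corresponds to $\fg_{1,<\sfa(w^*)}$, and so on.

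Next I would return to the definitions of $H^1_{1,\sfa-1}$ and $H^1_{2,2\sfa-1}$, i.e.\ to the conditions $\sfH_1$ and $\sfH_2$ from \cite[Section 5]{MR2960030}, and verify that each is phrased in terms of $\fg_{0,0}$-module data (highest weights $\Pi(\fg_{1,\sfa})$, Cartan products inside $\fg_1$, and the bracket $\fg_1 \times \fg_1 \to \fg_2$) that is carried isomorphically to the corresponding data for $w^*$ by the level-reversing symmetry — in effect because $\fg \cong \fg$ as a $\fg_{0,0}$-module and the pairing of $\fg_{1,\ell}$ against $\fg_{-1,-\ell}$ (or against the top piece $\fg_{2,2\sfa}$) is nondegenerate. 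Concretely, $\sfH_+(w)$ is a condition on the ``jump'' of $\Delta(w)$ at level $\sfa(w)$, and this jump for $w$ is identified with the jump for $w^*$ at level $\sfa(w^*)$ after reversal. Hence $\sfH_+(w) \iff \sfH_+(w^*)$, i.e.\ $\cO_w = 0 \iff \cO_{w^*} = 0$, and Corollary \ref{C:SchurFlex} finishes the argument. (Alternatively, for the classical $X$ one can read the statement off Theorems \ref{T:Gr}--\ref{T:Spinor}: the explicit partition-theoretic conditions there are manifestly invariant under the complementation on $\tilde\lambda$ that realizes Poincar\'e duality; but I prefer the uniform argument, since it also covers the exceptional cases $E_6/P_6$ and $E_7/P_7$ and explains the symmetry visible in Figures \ref{F:E6} and \ref{F:E7}.)

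The main obstacle I expect is the bookkeeping in the second paragraph: making precise the action of the longest element of $W_{\fg_0}$ on the $Z_w$-grading and checking that it sends $\ttJ(w)$ to itself and $\sfa(w)$ to $\sfa_{\max}-1-\sfa(w)$ for \emph{every} cominuscule $X$ and every $w \in W^\fp\setminus\{1,w_0\}$ — this requires the full classification of the admissible pairs $(\sfa(w),\ttJ(w))$ from \cite[Corollary 3.17]{MR2960030}. Once that identification is in hand, the symmetry of $\sfH_1$ and $\sfH_2$ is essentially formal, since those conditions only ever compare the $\sfa$-level piece of $\Delta(w)$ with its immediate neighbours, and duality swaps ``just below the cutoff'' with ``just above the cutoff'' on the dual side.
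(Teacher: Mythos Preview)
Your proposal is correct and follows the same overall route as the paper: reduce to showing $\sfH_+(w)\iff\sfH_+(w^*)$, then invoke Corollary \ref{C:SchurFlex}. The only difference is that the paper's proof is a two-line citation --- the equivalence $\sfH_+(w)\iff\sfH_+(w^*)$ is exactly \cite[Corollary 6.2]{MR2960030} --- whereas you sketch how one would reprove that corollary via the identification $\ttJ(w^*)=\ttJ(w)$, $\sfa(w^*)=\sfa_{\max}-1-\sfa(w)$ and the level-reversing symmetry; your sketch is accurate, and the ``bookkeeping'' you flag is precisely what is carried out in \cite[\S6]{MR2960030}.
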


\begin{proof}
Given $w \in W^\fp$, let $w^* \in W^\fp$ be the element associated to the Poincar\'e dual of $\xi_w$.  By \cite[Corollary 6.2]{MR2960030}, the condition $\sfH_+(w)$ holds if and only if $\sfH_+(w^*)$ holds.  It follows from Theorem \ref{T:SchubertFlex} (resp. Corollary \ref{C:SchurFlex}) that the Schubert system $\cB_w$ (resp. the Schur system $\cR_w$) is rigid if and only if the system $\cB_{w^*}$ (resp. $\cR_{w^*}$) is rigid. 
\end{proof}

As given by \cite[Definitions 5.27 and 5.36]{MR2960030}, we have 

\begin{definition} \label{D:H1H2}
The condition \emph{$\sfH_1$ fails} when there exist $\fg_{0,0}$--highest weights $-\b \in \Delta(\fg_{0,-1})$ and $\c \in \Delta(\fg_{1,\sfa})$ such that 
\begin{equation} \label{E:H1}
  \left[ \fg_{-\b} \,,\, \fg_\c \right] \ \stackrel{\mathrm{(i)}}{=} \ \{0\} 
  \ \not= \ 
  \left[ \fg_{\b} \,,\, \fg_\c \right] \ \stackrel{\mathrm{(ii)}}{=} \
  \left[ \fg_{\b} \,,\, \fg_{1,\sfa} \right] \, .
\end{equation}
The condition \emph{$\sfH_2$ fails} when there exist $\fg_{0,0}$--highest weights $\e \in \Delta(\fg_{1,\sfa-1})$ and $\c \in \Delta(\fg_{1,\sfa})$ such that 
\begin{equation} \label{E:H2}
  \{0\} \ \stackrel{\mathrm{(i)}}{\not=} \ \left[ \fg_{\e} \,,\, \fg_{-\c} \right] 
  \ \stackrel{\mathrm{(ii)}}{=} \ \left[ \fg_{\e} \,,\, \fg_{-1,-\sfa} \right] \, .
\end{equation}
When both $\sfH_1$ and $\sfH_2$ hold, we say that \emph{the condition $\sfH_+$ is satisfied}.
\end{definition} 

\begin{remark}
Note that $-\b$ is a $\fg_{0,0}$--highest weight if and only if $\b$ is a $\fg_{0,0}$--lowest weight.  That is, $\b\in\Delta(\fg_{0,1})$ is a simple root.
\end{remark}

\begin{remark} \label{R:a=0}
Note that $\sfH_2$ is trivially satisfied when $\sfa=0$, as $\fg_{1,\sfa-1} = \fg_{1,-1} = 0$.
\end{remark}

\begin{example} 
In Example \ref{eg:Gr0}, the $\fg_{0,0}$--lowest weights $\b\in\Delta(\fg_{0,1})$ are $\{ \a_2 \,,\, \a_3 \,,\, \a_7 \,,\, \a_9\,,\, \a_{12}\}$, and the $\fg_{0,0}$--highest weights are
\begin{eqnarray*}
  \Pi(\fg_{1,\sfa}) & = &
  \{ \a_1+\cdots+\a_6 \,,\ \a_3+\cdots+\a_8 \,,\ \a_4+\cdots+\a_{11} \} \,,\\
  \Pi(\fg_{1,\sfa-1}) & = & 
  \{ \a_3+\cdots+\a_6 \,,\ \a_4+\cdots+\a_8 \}\,.
\end{eqnarray*}
The condition $\sfH_1$ fails for $\b = \a_9$ and $\c = \a_3 + \cdots + \a_8$.  The condition $\sfH_2$ is satisfied.
\end{example}

\begin{example} 
In Example \ref{eg:LG0}, the $\fg_{0,0}$--lowest weights $\b\in\Delta(\fg_{0,1})$ are $\{ \a_1 \,,\, \a_2 \,,\, \a_4 \}$, and the $\fg_{0,0}$--highest weights are
\begin{eqnarray*}
  \Pi(\fg_{1,\sfa}) & = &
  \{ \a_1+\cdots+\a_5 \,,\ \a_2+2\a_3+2\a_4+\a_5 \} \,,\\
  \Pi(\fg_{1,\sfa-1}) & = & 
  \{ \a_2+\cdots+\a_5 \,,\ 2\a_3+2\a_4+\a_5 \}\,.
\end{eqnarray*}
The condition $\sfH_1$ fails for $\b = \a_4$ and $\c = \a_1 + \cdots + \a_5$.  The condition $\sfH_2$ fails for $\e=2\a_3+2\a_4+\a_5$ and $\c=\e+\a_2$.
\end{example}

%------------------------------------------------------------------------------
\subsection{Outline of the proof of Theorem \ref{T:SchubertFlex}} \label{S:outline}
%------------------------------------------------------------------------------
By  \cite[Theorem 5.38]{MR2960030}, the Schubert system $\cB_w$ is rigid when $\sfH_+$ is satisfied.  So to prove Theorem \ref{T:SchubertFlex} if suffices to show that there exist nontrivial integrals $Y$ of the Schubert system when $\sfH_+$ fails.  This is done in Sections \ref{S:prfH1} and \ref{S:prfH2}.  We will see that there exists a Schubert divisor $X_{w'} \subset X_w$ and a 1-parameter subgroup $A \subset G$ with the property that $Y = \overline{A\cdot X_{w'}}$ is a nontrivial integral variety of the Schubert system $\cB_w$.

To see that the varieties $Y$ constructed in Sections \ref{S:prfH1} and \ref{S:prfH2} are nontrivial integrals of the Schubert system we must review some of the results of \cite{MR2960030}.  Define
$$
  \fn_w^\perp \ = \ \bigoplus_{\a\in\Delta(\fg_1)\backslash\Delta(w)} \fg_{-\a}
  \ = \ \bigoplus_{b>\sfa} \fg_{-1,-b} \,,
$$
so that $\fg_{-1} = \fn_w \op \fn_w^\perp$, and let
$$
  \fg_w^\perp \ = \ \fn_w^\perp \,\op\, \fg_{0,-} \,\op\, \fg_{1,<\sfa} \, .
$$
Each cohomology class $[\n] \in H^1(\fn_w,\fg_w^\perp)$ admits a unique harmonic representative $\n \in \cH^1\subset \fg_w^\perp \ot \fn_w^*$ by \cite[Proposition 5.10]{MR2960030}.  The grading element $Z_\tti$ induces a graded decomposition $\cH^1 = \cH^1_0 \op \cH^1_1 \op \cH^1_2$ with $\cH_0 \subset \fn_w^\perp \ot \fn_w^*$, $\cH^1_1 \subset \fg_{0,-} \ot \fn_w^*$ and $\cH^1_2 \subset \fg_{1,<\sfa}\ot\fn_w^*$.

Let $\vartheta$ denote the $\fg$--valued, left-invariant Maurer-Cartan form on $G$.  Set $\fg_w = \fn_w \op \fg_{0,\ge0} \op \fg_{1,\ge\sfa}$, so that $\fg = \fg_w \op \fg_w^\perp$.  Let $\vartheta_{k,\ell}$ denote the $\fg_{k,\ell}$--valued component of the Maurer-Cartan form, with respect to the direct sum decomposition $\fg = \op \fg_{k,\ell}$ of \eqref{E:gkl}.  Similarly, let $\vartheta_{\fg_w}$ and $\vartheta_{\fn_w^\perp}$ respectively denote the $\fg_w$-- and $\fn_w^\perp$--valued components of $\vartheta$, with respect to the direct sum decompositions $\fg = (\fn_w \op \fn_w^\perp) \op \fg_{\ge0} = \fg_w \op \fg_w^\perp$.  The following is an amalgam of \cite[Lemma 4.9 and Corollary 5.12]{MR2960030}. 

\begin{lemma}\label{L:hrm}
There is a bijective correspondence between submanifolds $U \subset G$ such that 
\begin{i_list}
  \item $\vartheta_{\fg_w} : T_gU \to \fg_w$ is a linear isomorphism 
        for all $g \in U$,
  \item $\vartheta_{\fn_w^\perp} = 0$ vanishes on $U$,
  \item there exists a smooth map $\lambda : U \to \cH^1_1$ such that 
        $\vartheta_{0,-} = \lambda(\vartheta_{\fn_w})$ on $U$,
\end{i_list} 
and integral manifolds $U\cdot o \subset X$ of the Schubert system.  Additionally, there exists a smooth function $\mu : U \to \fg_{1,<\sfa} \ot \fn_w^*$ such that $\vartheta_{1,<\sfa} = \mu(\vartheta_{\fn_w})$ on $U$.  If $\lambda$ is identically zero, then $\mu$ takes values in $\cH^1_2$. 
\end{lemma}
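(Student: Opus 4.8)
The plan is to assemble the lemma from the frame-bundle description of the Schubert system in \cite[Section 4.4]{MR2960030} and the Hodge-theoretic analysis of its torsion in \cite[Section 5]{MR2960030}; indeed the statement is essentially the concatenation of \cite[Lemma 4.9]{MR2960030} and \cite[Corollary 5.12]{MR2960030}, and the work is in matching conventions. First I would recall the lifted picture: the projection $\pi : G \to G/P = X$ realizes $G$ as a reduction of the frame bundle of $X$, and the $\fg_{-1}$--valued part of the Maurer--Cartan form $\vartheta$ is the tautological ($\fp$--semibasic) soldering form. For a submanifold $U \subset G$, condition (i) says that $M := U\cdot o = \pi(U)$ is an immersed submanifold of dimension $|w| = \tdim\,\fn_w$ with independence condition $\vartheta_{\fn_w}$, while condition (ii), together with (i), forces $\ker(d\pi|_{T_gU}) = \ker(\vartheta_{\fn_w}|_{T_gU})$ and identifies $T_{g\cdot o}M$, via the frame $g$, with the distinguished plane $\fn_w$ --- that is, the canonical lift $\cM$ of $M$ lies on $\cB_w$. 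Thus the $U$ satisfying (i)--(ii) are exactly the lifts to $G$ of integral manifolds of the Schubert system, \emph{before} any normalization of the residual frame freedom. (In the grading $\cH^1 = \cH^1_0\op\cH^1_1\op\cH^1_2$, the degree--zero summand $\cH^1_0 \subset \fn_w^\perp\ot\fn_w^*$ is precisely the part of the torsion annihilated by (ii).) This is \cite[Lemma 4.9]{MR2960030}.

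Next I would differentiate. Pulling the Maurer--Cartan equation $\td\vartheta = -\half[\vartheta,\vartheta]$ back to such a $U$ and using (i)--(ii) to write every one--form in terms of $\vartheta_{\fn_w}$, the $\fg_w^\perp$--valued components $\vartheta_{0,-}$ and $\vartheta_{1,<\sfa}$ are forced to be $\fn_w^*$--linear, which yields smooth maps $\lambda : U \to \fg_{0,-}\ot\fn_w^*$ and $\mu : U \to \fg_{1,<\sfa}\ot\fn_w^*$ with $\vartheta_{0,-} = \lambda(\vartheta_{\fn_w})$ and $\vartheta_{1,<\sfa} = \mu(\vartheta_{\fn_w})$ --- already the last sentence of the lemma. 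The residual frame freedom, namely motion along the fibres of the reduction, acts on $\lambda$ by Lie--algebra coboundaries (this is the torsion--absorption step for the linear Pfaffian system), so by the harmonic--representative theory of \cite[Proposition 5.10]{MR2960030} one may normalize $\lambda$ to its harmonic part, i.e. $\lambda : U \to \cH^1_1$; this is condition (iii). The normalization pins down $U$ among the lifts of a fixed $M$ up to a discrete ambiguity, which gives the bijection $U \leftrightarrow U\cdot o$, and surjectivity onto integral manifolds is the reverse construction: adapt a frame to an integral manifold of $\cB_w$ and then harmonically normalize. This packaging is \cite[Corollary 5.12]{MR2960030}.

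Finally, for the last assertion I would inspect the $\fg_{1,<\sfa}$--component of the structure equations once $\lambda$ sits in harmonic gauge. The quadratic Maurer--Cartan terms feeding $\td\vartheta_{1,<\sfa}$ are brackets pairing $\vartheta_{0,-}$ with $\vartheta_{\fn_w}$, together with contributions from the $\fg_{0,\ge0}$--valued (connection) part of $\vartheta$ that do not affect the semibasic torsion; when $\lambda \equiv 0$ the first family vanishes identically, and the surviving identity forces $\mu$ to be harmonic, i.e. $\mu : U \to \cH^1_2$. The main obstacle --- as always in a moving--frames argument --- is the absorption/prolongation bookkeeping: one must check that the linearization of the residual gauge action on the torsion is exactly the relevant Lie--algebra coboundary operator, so that every class is realized by an honest normalized $U$ and the correspondence is a genuine bijection. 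This is where Kostant's theorem and the Hodge theory of $\fn_w$--cohomology enter, and it is precisely what the cited results of \cite{MR2960030} supply.
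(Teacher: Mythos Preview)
Your proposal is correct and takes essentially the same approach as the paper: the paper states this lemma without proof, presenting it explicitly as ``an amalgam of \cite[Lemma 4.9 and Corollary 5.12]{MR2960030}'' and noting that the submanifold $U$ is the sub-bundle $\cF^0$ of the latter. Your sketch simply unpacks what those cited results say and how they fit together, which is exactly the intended justification.
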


\noindent The submanifold $U$ of Lemma \ref{L:hrm} is the sub-bundle $\cF^0$ of \cite[Corollary 5.12]{MR2960030}.  It is a consequence of the arguments of \cite[Section 4.7]{MR2960030} that the sub-bundle $\cF^0$ is unique.  Then  \cite[Proposition 4.13]{MR2960030} yields

\begin{lemma} \label{L:hrm0}
The integral manifold $U\cdot o \subset X$ is a Schubert variety of type $w$ if and only if both $\lambda$ and $\mu$ vanish identically on $U$.  
\end{lemma}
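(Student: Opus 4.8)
The plan is to deduce the lemma from the moving-frames description of $U=\cF^0$ in Lemma \ref{L:hrm}: the vanishing of $\lambda$ and $\mu$ is precisely the statement that the structure equations of $U$ close up to the Maurer--Cartan equations of a subgroup of $G$. The key preliminary observation is that
\[
  \fg_w \ = \ \fn_w \,\op\, \fg_{0,\ge0} \,\op\, \fg_{1,\ge\sfa}
\]
is a \emph{subalgebra} of $\fg$: writing $\fn_w=\fg_{-1,\ge-\sfa}$ (Proposition \ref{P:aJ}), the bracket relations $[\fg_{k,\ell},\fg_{k',\ell'}]\subset\fg_{k+k',\ell+\ell'}$ together with $[\fg_{\pm1},\fg_{\pm1}]=0$ (Remark \ref{R:abelian}) send each bracket of two summands back into $\fg_w$ --- e.g.\ $[\fg_{-1,\ell},\fg_{1,\ell'}]\subset\fg_{0,\ell+\ell'}$ with $\ell\ge-\sfa$, $\ell'\ge\sfa$ forces $\ell+\ell'\ge0$. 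Let $G_w\subset G$ be the connected subgroup with Lie algebra $\fg_w$. Since $\fg_w\cap\fp=\fg_{0,\ge0}\op\fg_{1,\ge\sfa}$, the subgroup $G_w\cap P$ fixes $o$; as $N_w=\texp(\fn_w)\subset G_w$, the orbit $G_w\cdot o$ is irreducible of dimension $|w|$ and contains $N_w\cdot o$ as an open subset, so $\overline{G_w\cdot o}=\overline{N_w\cdot o}=X_w$ by \eqref{E:Xw}. More generally $\overline{g\,G_w\cdot o}=g\cdot X_w$ is a Schubert variety of type $w$ for every $g\in G$.

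Direction ($\Leftarrow$). Assume $\lambda$ and $\mu$ vanish identically on $U$, which (since the conclusion concerns a single Schubert variety) we take to be connected. With respect to $\fg=\fg_w\op\fg_w^\perp$, $\fg_w^\perp=\fn_w^\perp\op\fg_{0,-}\op\fg_{1,<\sfa}$, conditions (ii) and (iii) of Lemma \ref{L:hrm}, together with $\vartheta_{1,<\sfa}=\mu(\vartheta_{\fn_w})=0$, say that the $\fg_w^\perp$--component of the Maurer--Cartan form vanishes identically on $U$; hence $\vartheta|_U$ is $\fg_w$--valued, and by (i) it restricts to a linear isomorphism $T_gU\to\fg_w$ for all $g\in U$. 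Thus $U$ is an integral manifold of the left-invariant distribution $g\mapsto(\td L_g)_e\fg_w$ on $G$, which is involutive because $\fg_w$ is a subalgebra; being connected and of dimension $\dim\fg_w$, it is an open submanifold of a single left coset $g_0G_w$. Therefore $\overline{U\cdot o}=g_0\,\overline{G_w\cdot o}=g_0\cdot X_w$ is a Schubert variety of type $w$.

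Direction ($\Rightarrow$). Assume $\overline{U\cdot o}=g_0\cdot X_w$. Since $\vartheta$ is left-invariant, replacing $U$ by $g_0^{-1}U$ preserves conditions (i)--(iii) of Lemma \ref{L:hrm} and replaces $\lambda,\mu$ by their pullbacks under $L_{g_0}$; so we may assume $g_0=e$, i.e.\ $\overline{U\cdot o}=X_w$. The construction of Lemma \ref{L:hrm}, applied instead to the open cell $N_w\cdot o\subset X_w$, produces a sub-bundle that sits inside $G_w$ --- on which $\vartheta$ is $\fg_w$--valued, so $\vartheta_{\fn_w^\perp}=\vartheta_{0,-}=\vartheta_{1,<\sfa}=0$ and the corresponding $\lambda$ and $\mu$ vanish identically. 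By the uniqueness of the sub-bundle $\cF^0$ over a given integral manifold (noted after Lemma \ref{L:hrm}), this agrees with $U$ over the dense open subset $(U\cdot o)\cap(N_w\cdot o)$ of $X_w$, so $\lambda$ and $\mu$ vanish there; by continuity they vanish identically on $U$.

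The step I expect to be the main obstacle is the moving-frames input behind both directions --- that the vanishing of the first- and second-order invariants $\lambda$ and $\mu$ forces the structure equations of $U$ to close up into those of $G_w$, and the uniqueness of the sub-bundle $\cF^0$. These are supplied by the arguments of \cite[Section 4.7]{MR2960030} and by \cite[Proposition 4.13]{MR2960030}; granting them, the residual work is just the two elementary verifications above, that $\fg_w$ is a subalgebra and that $\overline{G_w\cdot o}=X_w$.
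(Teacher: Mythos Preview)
Your proof is correct and aligns with the paper's treatment: the paper does not give an independent argument for Lemma~\ref{L:hrm0} but simply states that it follows from \cite[Proposition~4.13]{MR2960030}, and you likewise invoke that proposition (together with the uniqueness of $\cF^0$ from \cite[Section~4.7]{MR2960030}) for the key moving-frames inputs. What you have added beyond the paper is a helpful unpacking of the geometry: the explicit verification that $\fg_w$ is a subalgebra, the Frobenius-type identification of $U$ with an open subset of a coset $g_0G_w$ when $\lambda=\mu=0$, and the observation $\overline{G_w\cdot o}=X_w$; these are all elementary and correct, and they make transparent why the cited proposition yields the lemma.
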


The integrals $Y = \overline{A \cdot X_{w'}}$ constructed in Sections \ref{S:prfH1} and \ref{S:prfH2} will be of the form $Y = \overline{U\cdot o}$, where $U$ satisfies the hypothesis of Lemma \ref{L:flex}.

\begin{lemma} \label{L:flex}
Let $U \subset G$ be a submanifold such that 
\begin{i_list}
  \item $\vartheta_{\fg_w} : T_gU \to \fn_w$ is a linear isomorphism 
        for all $g \in U$,
  \item $\vartheta_{\fn_w^\perp}$ vanishes on $U$.
\end{i_list}
Then $U \cdot o \subset X$ is a \emph{nontrivial} integral manifold of the Schubert system if either
\begin{a_list}
\item there exists a smooth map $\lambda : U \to \cH^1_1$, that is not identically zero, such that $\vartheta_{0,-} = \lambda(\vartheta_{\fn_w})$ on $U$; or
\item $\vartheta_{0,-}$ vanishes on $U$, and there exists a smooth map $\mu : U \to \cH^1_2$, that is not identically zero, such that $\vartheta_{1,<\sfa} = \mu(\vartheta_{\fn_w})$.
\end{a_list}
\end{lemma}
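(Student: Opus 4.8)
The plan is to show that the hypotheses on $U$ in Lemma \ref{L:flex} are strong enough to \emph{force} the map $U\cdot o \to X$ to be an integral manifold of the Schubert system, and then to use Lemmas \ref{L:hrm} and \ref{L:hrm0} to extract nontriviality from the fact that $\lambda$ (in case (a)) or $\mu$ (in case (b)) does not vanish. First I would observe that condition (i) of Lemma \ref{L:flex} is stronger than condition (i) of Lemma \ref{L:hrm}: it asks that $\vartheta_{\fg_w}$ restrict to an isomorphism onto $\fn_w$, which in particular kills $\vartheta_{0,\ge0}$ and $\vartheta_{1,\ge\sfa}$ on $U$. Combined with condition (ii), $\vartheta_{\fn_w^\perp}=0$, this says $\vartheta$ on $U$ takes values only in $\fn_w \op \fg_{0,-} \op \fg_{1,<\sfa}$. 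In case (a), one has a candidate $\lambda$-map; in case (b), $\vartheta_{0,-}$ also vanishes, so $\vartheta$ takes values in $\fn_w \op \fg_{1,<\sfa}$ and the $\lambda$ of Lemma \ref{L:hrm} is identically zero, whence by the last sentence of Lemma \ref{L:hrm} the $\mu$-map automatically lands in $\cH^1_2$ — so in case (b) we need only produce the relation $\vartheta_{1,<\sfa}=\mu(\vartheta_{\fn_w})$ for \emph{some} $\mu$, and the harmonicity is free.

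The key step is therefore verifying that a $U$ satisfying (i), (ii), and (a) (resp. (i), (ii), and (b)) actually produces an integral manifold of the Schubert system, i.e. that it falls under the bijective correspondence of Lemma \ref{L:hrm}. In case (a) this is immediate: (i) of Lemma \ref{L:flex} $\Rightarrow$ (i) of Lemma \ref{L:hrm}, (ii) is verbatim (ii), and (a) is verbatim (iii), with $\lambda$ valued in $\cH^1_1$ as required. So $U\cdot o$ is an integral manifold. Since $\lambda \not\equiv 0$, Lemma \ref{L:hrm0} says $U\cdot o$ is \emph{not} a Schubert variety of type $w$, which is exactly the assertion of nontriviality. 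In case (b), I would feed the hypotheses into Lemma \ref{L:hrm} with the \emph{zero} map playing the role of $\lambda$: then hypothesis (iii) reads $\vartheta_{0,-}=0$, which is precisely the stated vanishing of $\vartheta_{0,-}$ on $U$; so again $U\cdot o$ is an integral manifold, and now the ``additionally'' clause of Lemma \ref{L:hrm} supplies a map $\mu: U \to \fg_{1,<\sfa}\ot\fn_w^*$ with $\vartheta_{1,<\sfa}=\mu(\vartheta_{\fn_w})$ — and because $\lambda\equiv 0$, this $\mu$ takes values in $\cH^1_2$. Here I must check that the hypothesis (b) of Lemma \ref{L:flex} — namely the existence of \emph{some} nonzero $\mu': U \to \cH^1_2$ with $\vartheta_{1,<\sfa}=\mu'(\vartheta_{\fn_w})$ — is consistent with the $\mu$ coming out of Lemma \ref{L:hrm}. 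Since $\vartheta_{\fn_w}: TU \to \fn_w$ is an isomorphism at each point (by (i)), the equation $\vartheta_{1,<\sfa}=\mu(\vartheta_{\fn_w})$ determines $\mu$ uniquely pointwise, so $\mu = \mu' \not\equiv 0$. Then Lemma \ref{L:hrm0} again gives that $U\cdot o$ is not a Schubert variety of type $w$.

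The main obstacle I anticipate is the bookkeeping with the two distinct but overlapping ``(i)/(ii)/(iii)'' lists — Lemma \ref{L:hrm} versus Lemma \ref{L:flex} — and making transparent that Lemma \ref{L:flex}'s condition (i) (isomorphism onto $\fn_w$, the $|w|$-dimensional piece) really does imply Lemma \ref{L:hrm}'s condition (i) (isomorphism onto $\fg_w$, which is bigger), so that one is genuinely in the situation of Lemma \ref{L:hrm}. This requires care: $\dim \fg_w > |w|$ in general, so the two conditions are \emph{not} literally the same, and the resolution is that a submanifold $U$ of the frame bundle with $\vartheta_{\fg_w}$ landing \emph{in} $\fn_w$ is not full-dimensional over $\cF^0$ — rather, $U\cdot o$ is still a legitimate integral manifold of the \emph{variety} (as opposed to the manifold of full dimension $\dim\fg_w$), and one should point back to the fact that $\cF^0$ is characterized by (i)--(iii) with $\vartheta_{\fg_w}$ onto $\fg_w$, while $U$ here maps to a sub-bundle over a smaller-dimensional integral. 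A clean way around this is to note that the statements of Lemmas \ref{L:hrm} and \ref{L:hrm0} are really about the induced map $U \to \cF^0$ and the pulled-back $\lambda, \mu$; since (ii) and (iii) of Lemma \ref{L:flex} pin down the Maurer--Cartan components that detect nontriviality, and since $\vartheta_{\fn_w}$ is injective on $TU$, the pullbacks $\lambda, \mu$ are well-defined and nonzero exactly when the hypotheses (a) or (b) say they are. Modulo this identification, the proof is a direct citation of Lemmas \ref{L:hrm} and \ref{L:hrm0}.

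\begin{proof}
Suppose $U$ satisfies (i) and (ii). Condition (i) says $\vartheta_{\fg_w}$ maps $T_gU$ isomorphically onto $\fn_w \subset \fg_w$; in particular $\vartheta_{0,\ge0}$ and $\vartheta_{1,\ge\sfa}$ vanish on $U$, and the restriction of $\vartheta_{\fn_w}$ to $T_gU$ is a linear isomorphism onto $\fn_w$ for all $g \in U$. Together with (ii), the Maurer--Cartan form $\vartheta$ on $U$ takes values in $\fn_w \op \fg_{0,-} \op \fg_{1,<\sfa}$.

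\emph{Case (a).} Hypotheses (i), (ii) and (a) are precisely hypotheses (i), (ii) and (iii) of Lemma \ref{L:hrm} (with $U$ mapping onto its image $\cF^0$, and $\lambda$ the pullback of the map of Lemma \ref{L:hrm}). Hence $U\cdot o \subset X$ is an integral manifold of the Schubert system. Since $\lambda$ is not identically zero, Lemma \ref{L:hrm0} shows $U\cdot o$ is not a Schubert variety of type $w$; that is, $U\cdot o$ is a nontrivial integral manifold.

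\emph{Case (b).} Here $\vartheta_{0,-}$ vanishes on $U$, so hypotheses (i), (ii) of Lemma \ref{L:hrm} hold and hypothesis (iii) holds with $\lambda \equiv 0$. Hence $U\cdot o$ is an integral manifold of the Schubert system, and by the final assertion of Lemma \ref{L:hrm} the accompanying map $\mu: U \to \fg_{1,<\sfa}\ot\fn_w^*$ with $\vartheta_{1,<\sfa}=\mu(\vartheta_{\fn_w})$ takes values in $\cH^1_2$. Since $\vartheta_{\fn_w}: T_gU \to \fn_w$ is an isomorphism at each $g$, the equation $\vartheta_{1,<\sfa}=\mu(\vartheta_{\fn_w})$ determines $\mu$ uniquely, so $\mu$ coincides with the nonzero map posited in (b). As $\lambda \equiv 0$ and $\mu \not\equiv 0$, Lemma \ref{L:hrm0} shows $U\cdot o$ is not a Schubert variety of type $w$; that is, $U\cdot o$ is a nontrivial integral manifold.
\end{proof}
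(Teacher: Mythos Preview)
You have correctly identified both the strategy (reduce to Lemmas~\ref{L:hrm} and~\ref{L:hrm0}) and the main obstacle: condition~(i) of Lemma~\ref{L:flex} asserts that $\vartheta_{\fg_w}$ maps $T_gU$ isomorphically onto $\fn_w$, so $\dim U = |w|$, whereas condition~(i) of Lemma~\ref{L:hrm} requires an isomorphism onto all of $\fg_w$, so $\dim U = \dim\fg_w > |w|$. These are \emph{not} the same hypothesis, and your sentence ``Hypotheses (i), (ii) and (a) are precisely hypotheses (i), (ii) and (iii) of Lemma~\ref{L:hrm}'' is false as stated. Your attempted fix --- talk of ``$U$ mapping onto its image $\cF^0$'' and ``pullbacks'' --- does not actually produce a submanifold to which Lemma~\ref{L:hrm} applies, and so the gap you anticipated in your discussion remains open in your formal proof.

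The paper closes the gap with one extra line: let $G' \subset G$ be the connected subgroup with Lie algebra $\fg_{0,\ge0}\oplus\fg_{1,\ge\sfa}$, and set $U' = U\,G'$. Then $U'$ is a submanifold of $G$ of dimension $\dim\fg_w$, and since $G' \subset P$ fixes $o$ we have $U'\cdot o = U\cdot o$. One checks that $U'$ satisfies (i) and (ii) of Lemma~\ref{L:hrm}; if $U$ satisfies~(a) then $U'$ satisfies~(iii) with the given nonzero $\lambda$, and if $U$ satisfies~(b) then $\vartheta_{0,-}$ vanishes on $U'$ as well, so~(iii) holds with $\lambda\equiv 0$ and the $\mu$ of Lemma~\ref{L:hrm} (which is uniquely determined, as you note) is the given nonzero one. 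Lemma~\ref{L:hrm0} then gives nontriviality in both cases. Once you insert this thickening step, the rest of your argument is correct.
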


\begin{proof}
Let $G' \subset G$ be the connected Lie group with Lie algebra $\fg_{0,\ge0} \op \fg_{1,\ge\sfa}$.  Let $U \subset G$ be a submanifold satisfying (i) and (ii) of Lemma \ref{L:flex}.  Then $U' = U G' \subset G$ is a submanifold satisfying (i) and (ii) of Lemma \ref{L:hrm}.  

Suppose $U$ satisfies Lemma \ref{L:flex}(a).  Then $U'$ satisfies Lemma \ref{L:hrm}(iii).  Lemma \ref{L:hrm0} implies that $U\cdot o = U'\cdot o \subset X$ is a nontrivial integral of the Schubert system.

Suppose $U$ satisfies Lemma \ref{L:flex}(b).  The $\vartheta_{0,-}$ also vanishes on $U'$.  In particular, $U'$ satisfies Lemma \ref{L:hrm}(iii) trivially.  Moreover, since $\lambda$ is identically zero on $U'$, the function $\mu$ of Lemma \ref{L:hrm} necessarily takes values in $\cH^1_2$.  Since $\mu$ is nonzero on $U \subset U'$, Lemma \ref{L:hrm0} implies that $U\cdot o = U'\cdot o \subset X$ is a nontrivial integral of the Schubert system.
\end{proof}

%------------------------------------------------------------------------------
\subsection{Proof of Theorem \ref{T:SchubertFlex} in the case that $\sfH_1$ fails} \label{S:prfH1}
%------------------------------------------------------------------------------

We will follow the strategy outlined in Section \ref{S:outline}.  Suppose that $\sfH_1$ fails for a pair $\c$, $-\b$; see Definition \ref{D:H1H2}.  Then \cite[Lemma 5.28]{MR2960030} asserts that
\begin{equation} \label{E:cH1}
  \fg_{-\b}\ot \fg_\c \subset \cH^1_1 \,.
\end{equation}

Lemma \ref{L:divisor} and the fact that $\c \in \Delta(\fg_{1,\sfa})$ is a $\fg_{0,0}$--highest weight yield $w' \in W^\fp$ such that $\Delta(w') \sqcup \{\c\} = \Delta(w)$.  Fix $0\not=C \in \fg_{-\c}$ and $0 \not= B \in \fg_{-\b}$.  Let $\fa = \langle B + C \rangle$.

\begin{claim*} 
The direct sum $\fn' := \fa \op \fn_{w'}$ is a subalgebra of $\fg$.
\end{claim*}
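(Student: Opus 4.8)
The plan is to verify directly that $\fn' = \fa \oplus \fn_{w'}$ is closed under the bracket, using the bigraded structure $\fg = \bigoplus \fg_{k,\ell}$ of \eqref{E:gkl} together with the defining identities for $\sfH_1$ in \eqref{E:H1}. Recall from Proposition \ref{P:aJ} that $\fn_w = \fg_{-1,0}\oplus\cdots\oplus\fg_{-1,-\sfa}$, and since $\Delta(w') = \Delta(w)\setminus\{\c\}$ with $\c\in\Delta(\fg_{1,\sfa})$, we have $\fn_{w'} = \fn_w \ominus \fg_{-\c}$; in particular $\fn_{w'}$ differs from $\fn_w$ only in the bottom graded piece $\fg_{-1,-\sfa}$. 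Also $B + C$ has components $B\in\fg_{-\b}\subset\fg_{0,-1}$ (so $-\b\in\Delta(\fg_{0,-1})$, bidegree $(0,-1)$) and $C\in\fg_{-\c}\subset\fg_{-1,-\sfa}$ (bidegree $(-1,-\sfa)$).

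First I would check $[\fn_{w'},\fn_{w'}]=0$: this is immediate from Remark \ref{R:abelian}, since $\fn_{w'}\subset\fg_{-1}$ and $\fg_{-1}$ is abelian. Next, $[B+C,B+C]=0$ trivially since $\fa$ is one-dimensional. The substantive computation is $[B+C,\fn_{w'}]\subset\fn'$. Expand: $[B+C,Z] = [B,Z] + [C,Z]$ for $Z\in\fn_{w'}$. The term $[C,Z]$ lies in $[\fg_{-1},\fg_{-1}]=0$. For $[B,Z]$ with $Z$ of bidegree $(-1,-b)$, $0\le b\le\sfa$: the bracket lands in $\fg_{-1,-b-1}$. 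When $b<\sfa$ this is a subspace of $\fn_{w'}$ (it sits inside $\fn_w$ and avoids the removed line $\fg_{-\c}$, since $\fg_{-\c}$ has $\ell = -\sfa$, not $-\sfa-1\ldots$ wait — actually $-b-1$ ranges in $\{-1,\dots,-\sfa\}$ as $b$ ranges in $\{0,\dots,\sfa-1\}$, so I must be careful: the image can land in bidegree $(-1,-\sfa)$, namely when $b=\sfa-1$). So the real point is the bottom case: I need to show that for $Z$ of bidegree $(-1,-(\sfa-1))$ — i.e.\ $Z\in\fg_{-1,-\sfa+1}$ — the bracket $[B,Z]\in\fg_{-1,-\sfa}$ in fact lies in $\fn_{w'}$, i.e.\ has no $\fg_{-\c}$-component, and for $Z\in\fg_{-1,-\sfa}$ (which is all of $\fn_{w'}$'s bottom piece together with the removed $\fg_{-\c}$) the bracket $[B,Z]\in\fg_{-1,-\sfa-1}=\fn_w^\perp$'s piece, which is \emph{not} in $\fn'$ — so that term must vanish.

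Thus the crux is: (1) $[\fg_\b,\fg_{-1,-\sfa}] = 0$, equivalently $[\fg_{-\b},\fg_{1,\sfa}]=0$ after conjugating, which is exactly hypothesis \eqref{E:H1}(i) $[\fg_{-\b},\fg_\c]=0$ — but I need it for \emph{all} of $\fg_{1,\sfa}$, not just $\fg_\c$; here is where I'd invoke that $-\b$ is a $\fg_{0,0}$-highest weight (equivalently $\b$ a simple root / $\fg_{0,0}$-lowest weight) and $\c$ is a $\fg_{0,0}$-highest weight, so $\fg_\c$ generates $\fg_{1,\sfa}$ as a $\fg_{0,0}$-module and $\fg_{-\b}$ is a lowest-weight line, letting me propagate $[\fg_{-\b},\fg_\c]=0$ to $[\fg_{-\b},\fg_{1,\sfa}]=0$ via the Jacobi identity against $\fg_{0,0}$; and (2) $[\fg_\b,\fg_{-1,-\sfa+1}]\subset\fn_{w'}$, i.e.\ the $\fg_{-\c}$-component vanishes — dually $[\fg_{-\b},[\fg_\c,\text{stuff}]]$ relations, or more cleanly: the $\fg_{-\c}$-component of $[B,Z]$ being nonzero would mean $\c = \b^? + \ldots$; I'd rule it out by a root-string / weight argument using that $\Delta^+\setminus\Delta(w)$ is closed (Remark \ref{R:Dw}) together with \eqref{E:H1}. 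I expect step (1) — upgrading the bracket vanishing from the single root vector $\fg_\c$ to the whole isotypic piece $\fg_{1,\sfa}$, which is really a statement about $\fg_{0,0}$-module structure and is precisely what clause \eqref{E:H1}(ii) is packaging — to be the main obstacle, and I'd handle it by choosing $\c$ to be the highest weight of its $\fg_{0,0}$-irreducible constituent and noting $B\in\fg_{-\b}$ annihilates the highest-weight vector, hence (by Jacobi, lowering) annihilates the whole constituent; the remaining constituents of $\fg_{1,\sfa}$ are handled because $[\fg_{-\b},-]$ raises weight by $\b$ and an easy weight count shows it cannot hit $\fg_{-\c}\subset\fg_{-1,-\sfa}$ from the relevant pieces. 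Once $[\fa,\fn_{w'}]\subset\fn_{w'}$ is established, closure of $\fn'$ follows.
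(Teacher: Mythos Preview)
Your overall structure matches the paper's: reduce to $[B+C,\fn_{w'}]\subset\fn'$, kill $[C,-]$ by Remark~\ref{R:abelian}, then analyze $[B,\fg_{-\mu}]\subset\fg_{-\b-\mu}$ according to the value $\mu(Z_w)\in\{0,\ldots,\sfa\}$. You also correctly isolate the two nontrivial cases $\mu(Z_w)=\sfa$ and $\mu(Z_w)=\sfa-1$. The gap is that you have swapped the roles of (\ref{E:H1}.i) and (\ref{E:H1}.ii), and the workaround you propose for the resulting mismatch does not go through.

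For $\mu(Z_w)=\sfa$ with $\mu\neq\c$ (your item (1)): what you need is $\b+\mu\notin\Delta$, equivalently $[\fg_\b,\fg_\mu]=0$. This is immediate from (\ref{E:H1}.ii): the identity $[\fg_\b,\fg_{1,\sfa}]=[\fg_\b,\fg_\c]=\fg_{\b+\c}$ forces $[\fg_\b,\fg_\mu]=0$ for every $\mu\neq\c$, since $\fg_{\b+\mu}\cap\fg_{\b+\c}=0$. You instead reach for (\ref{E:H1}.i), which controls $\c-\b$, not $\mu+\b$; there is a sign slip in your conjugation (the bracket at hand is $[\fg_{-\b},\fg_{-1,-\sfa}]$, whose conjugate is $[\fg_\b,\fg_{1,\sfa}]$, not $[\fg_{-\b},\fg_{1,\sfa}]$). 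The module-theoretic propagation you then sketch is aimed at the wrong identity and is in any case false: in the Grassmannian example following Definition~\ref{D:H1H2}, with $\b=\a_9$, one has $\a_4+\cdots+\a_9\in\Delta(\fg_{1,\sfa})$ and $[\fg_{-\a_9},\fg_{\a_4+\cdots+\a_9}]=\fg_{\a_4+\cdots+\a_8}\neq0$, so $[\fg_{-\b},\fg_{1,\sfa}]\neq0$. The heuristic ``$B$ kills the highest weight vector, hence the whole module'' fails because $B\in\fg_{-\b}$ is a \emph{lowering} operator.

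For $\mu(Z_w)=\sfa-1$ (your item (2)): you need $\b+\mu\neq\c$. If $\b+\mu=\c$ then $\c-\b=\mu\in\Delta$, whence $[\fg_{-\b},\fg_\c]\neq0$ by Remark~\ref{R:roots}, contradicting (\ref{E:H1}.i). That one line is the paper's argument; closedness of $\Delta^+\setminus\Delta(w)$ is not the relevant tool here.
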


The claim is proved at the end of this section.  Assuming the claim holds, let $N' = \texp(\fn') \subset G$.  When restricted to $N'$ the Maurer-Cartan form $\vartheta$ takes values in $\fn'$.  In particular, $U = N'$ satisfies hypotheses (i) and (ii) of Lemma \ref{L:flex}.  Moreover, the map $\lambda$ of Lemma \ref{L:flex} is the nonzero, constant map given by $\lambda_{|\fn_{w'}} =0$ and $\lambda(C) = B$.  By \eqref{E:cH1}, $\lambda$ lies in $\cH^1_1$.  It follows from Lemma \ref{L:flex}.a that $N'\cdot o \subset X$ is a nontrivial integral manifold of the Schubert system.

Finally, note that $\fn' \subset \fg_{-1} \op \fg_{0,-1}$ is nilpotent.  It follows that $N' = \texp(\fn')$ is a linear algebraic subgroup of $G$.  Therefore, the Zariski closure $Y_1 = \overline{N' \cdot o}$ is an irreducible algebraic subvariety of $X$ with $\tdim\,Y_1 = \tdim\,N' = \tdim\,X_w$.  Therefore, $Y_1$ is a nontrivial, integral variety of the Schubert system.  Modulo the claim, this completes the proof of Theorem \ref{T:SchubertFlex} in the case that $\sfH_1$ fails.

\begin{remark}
Let $A = \texp(\fa)$.  Then $\overline{N'} = \overline{A\,N_{w'}}$ and $Y_1 = \overline{A \cdot X_{w'}}$.
\end{remark}

\noindent The proof the the claim makes use of the following

\begin{remark} \label{R:roots}
Given two roots $\m,\n\in\Delta$, we have $[\fg_\m , \fg_\n] = \fg_{\m+\n}$.  In particular, $\m+\n\in\Delta$ if and only if $[\z,\xi] \not=0$ for every $0\not=\z \in \fg_\m$ and $0\not=\xi\in\fg_\n$.  See, for example, \cite[Corollary 2.35]{MR1920389}.
\end{remark}

\begin{proof}[Proof of claim]
Since both $\fa$ and $\fn_{w'}$ are subalgebras of $\fg$, it suffices to show that $[\fa \,,\, \fn_{w'} ] \subset \fn'$.  To that end, let $\xi \in \fn_{w'}$.  Then $C \in \fg_{-\c} \subset \fg_{-1}$ and Remark \ref{R:abelian} yield
$$
  [ B+C \,,\, \xi ] \ = \ [ B \,, \, \xi ] \, .
$$
Without loss of generality we may assume that $\xi \in \fg_{-\m}$ is a nonzero root vector, $\m \in \Delta(w')$.  By Remark \ref{R:roots}, $[B\,,\, \x] \in \fg_{-\b-\m}$ is nonzero if and only if $\b+\m$ is a root; assume this is the case.  We wish to show that $\fg_{-\b-\m} \subset \fn_{w'}$; equivalently, $\b+\m \in \Delta(w')$.  

By Proposition \ref{P:aJ}, $\Delta(w) = \Delta(\fg_{1,\le\sfa})$.  Since $\Delta(w') \sqcup\{\c\} = \Delta(w)$ and $\c\in\Delta(\fg_{1,\sfa})$, we have 
$$
  0 \le \m(Z_w) \le \sfa \quad \hbox{ and } \quad
  \Delta(\fg_{1,<\sfa}) \ \subset \ \Delta(w') 
  \ \subset \ \Delta(\fg_{1,\le \sfa}) \, .
$$ 
We consider three cases:  First, if $\m(Z_w) < \sfa-1$, then $(\m+\b)(Z_w) < \sfa$ and $\b+\m\in\Delta(w')$.  Second, if $\m(Z_w) = \sfa-1$, then $(\m+\b)(Z_w) = \sfa$.  By (\ref{E:H1}.i), $\m+\b \not=\c$, so $\m+\b \in \Delta(w')$.  Third, if $\m(Z_w) = \sfa$, then (\ref{E:H1}.ii) implies $\b+\m$ is not a root.  
\end{proof}

%------------------------------------------------------------------------------
\subsection{Proof of Theorem \ref{T:SchubertFlex} in the case that $\sfH_2$ fails} \label{S:prfH2}
%------------------------------------------------------------------------------

We will follow the strategy outlined in Section \ref{S:outline}.  Suppose that $\sfH_2$ fails for a pair $\c \in \Delta(\fg_{1,\sfa})$ and $\e \in\Delta(\fg_{1,\sfa-1})$, see Definition \ref{D:H1H2}. Then \cite[Lemma 5.39]{MR2960030} asserts that
\begin{equation} \label{E:cH2}
  \fg_{\e} \ot \fg_\c \ \subset \ \cH^1_2 \, .
\end{equation}

By Lemma \ref{L:divisor} there exists $w' \in W^\fp$ such that $\Delta(w') \sqcup \{\c\} = \Delta(w)$.  Fix $0\not=C \in \fg_{-\c}$ and $0 \not= E \in \fg_\e$.  Set 
\begin{equation} \label{E:2A}
  \fa \ := \ \langle C+E \rangle \ \subset \ \fg
  \qquad\hbox{and}\qquad
  A \ := \ \texp(\fa) \ \subset \ G \, .
\end{equation}
We will show that $Y_2 = \overline{ A N_{w'} \cdot o }$ is a nontrivial integral variety of the Schubert system.

Because $\texp : \fg \to G$ is a diffeomorphism from a neighborhood of $0 \in \fg$ to a neighborhood of $\tId \in G$, it follows that there exist connected neighborhoods $A_0 \subset A$ and $N_0 \subset N_{w'}$ of $\tId$ that are embedded submanifolds in $G$.  Let $\mathbf{m} : G \times G \to G$ denote the multiplication map.  Then $\mathbf{m}_* : \fg\times\fg \to \fg$ is given by $\mathbf{m}_*(u,v) = u+v$.  In particular, the restriction $\mathbf{m}_*: \fa \times \fn_{w'} \to \fa \op \fn_{w'}$ is a bijection.  It follows, shrinking $A_0$ and $N_0$ if necessary, that there is a neighborhood $U \subset G$ of $\tId$ such that $\mathbf{m} : A_0 \times N_0 \to U$ is an isomorphism.

Compute the Maurer-Cartan form $\vartheta$ on $U \subset G$ as follows.  Let $g_0 g_1 \in U$ with $g_0 \in A_0$ and $g_1 \in N_0$.  The isomorphism $\phi : G \to G$ defined by $\phi(g) = g_0 g g_1$ preserves $A \,N_{w'}$ and so induces an isomorphism $\phi_* : \fa\op\fn_{w'} \to T_{g_0g_1} U$.  So any element of $T_{g_0g_1} U$ may be expressed as $\phi_* \xi$ with $\xi \in \fa\op\fn_{w'}$.    Given $g \in G$, let $L_g , R_g : G \to G$ denote the left- and right-multiplication maps.  Then $\phi = L_{g_0}\,R_{g_1}$.  By definition 
\begin{subequations}\label{SE:4}
\begin{equation} \label{E:4a}
  \vartheta (\phi_*\xi) \ = \ (L_{g_0g_1}^{-1})_*( \phi_* \xi)
  \ = \ \tAd_{g_1^{-1}} \xi \, .
\end{equation}
Therefore, $\vartheta_{|U}$ takes values in $\tAd_{N_{w'}}(\fa \op \fn_{w'}) = \tAd_{N_{w'}}(\fa) \op \tAd_{N_{w'}}(\fn_{w'})$.  Since $\fn_{w'}$ is an algebra, we have 
\begin{equation} \label{E:4b}
  \tAd_{N_{w'}}(\fn_{w'}) \ = \ \fn_{w'} \,.
\end{equation}  
It remains to consider
\begin{equation} \label{E:4c}
  \tAd_{N_{w'}} (\fa) \ = \ \tAd_{\texp(\fn_{w'})} (\fa)
   \ = \ (\texp\circ\tad_{\fn_{w'}}) (\fa)  \, .
\end{equation}
The algebra $\fa$ is spanned by $C+E$.  By Remark \ref{R:abelian}, $[\fn_{w'} , C+E] = [ \fn_{w'} , E]$.  Equations (\ref{E:nw_aJ}, \ref{E:H2}), and $\fn_{w'} \op \fg_{-\c} = \fn_w$ imply that $[\fn_{w'} \,,\,E] \ \subset \ \fg_0$.  Moreover, the fact that $\e \in \Delta(\fg_{1,\sfa-1})$ is a highest $\fg_{0,0}$--weight yields
$$
  [\fn_{w'} \,, \, E] \ \subset \ \fh_\e \op \fg_0^+ \,,
  \quad\hbox{where } \ \fh_\e := [\fg_\e\,,\,\fg_{-\e}] \ \hbox{ and } \ 
  \fg_0^+ := \op_{\a\in\Delta^+(\fg_0)} \fg_\a \, .
$$
By Proposition \ref{P:aJ},  $\Delta(w')\sqcup\{\c\} = \Delta(w) = \Delta(\fg_{1,\le\sfa})$.  So, since $\c\in\Delta(\fg_{1,\sfa})$ is a highest $\fg_{0,0}$--weight, we have $[ \fn_{w'} , \fh_\e\op\fg_0^+ ] \subset \fn_{w'}$.  Therefore, 
\begin{equation} \label{E:4d}
  (\texp\circ\tad_{\fn_{w'}})(\fa) \ \subset \ \fa \ \op \ 
  (\fh_\e \op \fg_0^+) \ \op \ \fn_{w'} \, .
\end{equation}
\end{subequations}
Now \eqref{SE:4} yields $\vartheta_{\fn_w^\perp} = 0$ on $U$, and $\vartheta_{\fn_w} : T_zU \to \fn_w$ is a linear isomorphism for all $z \in U$.  Lemma \ref{L:flex} implies $U\cdot o \subset X$ is an integral manifold of the Schubert system.  Equations \eqref{SE:4} also imply that $\vartheta_{0,-}$ vanishes on $U$.  Similarly, \eqref{SE:4} implies that $\vartheta_{1,<\sfa} = \m(\vartheta_{\fn_w})$, where $\mu: U \to \fg_{1,<\sfa}\ot \fn_w^*$ is the nonzero, constant map defined by $\mu_{|\fn_{w'}} = 0$ and $\mu(C) = E$.  By \eqref{E:cH2}, $\mu$ takes values in $\cH^1_2$.  By Lemma \ref{L:flex}(b), $U \cdot o \subset X$ is a nontrivial integral manifold of the Schubert system.

Finally, to see that the Zariski closure 
\begin{equation} \label{E:Y2}
  Y_2 \ = \ \overline{U \cdot o} \ = \ \overline{A N_{w'} \cdot o}
\end{equation}
is a variety of dimension $|w|$, it suffices to observe that $A$ and $N_{w'}$ are algebraic.  This is a consequence of the fact that both $\fa$ and $\fn_{w'}$ are nilpotent.  The nilpotency of $\fn_{w'}$ is immediate from Remark \ref{R:abelian}.  To see that $\fa$ is nilpotent, note that $[\fa , \fg_{\pm1} ] \subset \fg_0$ and $\tad_{\fa}^2(\fg_{0,\ell}) \subset \fg_{0,\ell-1}$; these relations imply that the adjoint action of $\fa$ on $\fg$ is nilpotent.  This completes the proof of Theorem \ref{T:SchubertFlex}.

%------------------------------------------------------------------------------
\appendix
%------------------------------------------------------------------------------
%------------------------------------------------------------------------------
\section{Geometric versus representation theoretic descriptions} \label{S:appendix}
%------------------------------------------------------------------------------

In this section we provide the `dictionary' between the representation theoretic $(\sfa,\ttJ)$--description of Schubert varieties (Section \ref{S:aJ}) used in the proof of Theorem \ref{T:SchubertFlex} and the more familiar and geometric, partition-based descriptions for the classical $G/P$.  This dictionary, applied to Corollary \ref{C:SchurFlex} and \cite[Theorem 6.1]{MR2960030}, yields the theorems of Section \ref{S:statements}.

%------------------------------------------------------------------------------
\subsection{Notation}
%------------------------------------------------------------------------------
Given a vector space $V \simeq\bC^n$, we fix a basis $\{ e_1 , \ldots , e_n\}$.  Let $\{ e^1 , \ldots , e^n\}$ denote the dual basis of $V^*$.  Set 
$$
  e^k_\ell \ \dfn \  e_\ell \ot e^k \ \in \ \tEnd(V)
  \quad\hbox{for all} \quad \le k,\ell \le n \,.
$$

\begin{remark}\label{R:newaJ}
I follow the notation of \cite{MR2960030}, with the following exception.  In \cite{MR2960030}, we uniformly write $\ttJ = \{\ttj_1<\cdots<\ttj_\sfp\}$.  In this paper, it is convenient to reorder the $\ttj_\ell$ in some cases.
\end{remark}

%------------------------------------------------------------------------------
\subsection{Odd dimensional quadrics \boldmath $Q^{2n-1} = B_n/P_1$ \unboldmath } \label{S:aJodd}
%------------------------------------------------------------------------------
Set $m = 2n-1$.  There is a bijection between $W^\fp\backslash\{1,w_0\}$ and pairs $\sfa,\ttJ$ such that $\ttJ = \{ \ttj \} \subset \{2,\ldots,n\}$ and $\sfa \in \{0,1\}$; see \cite[Corollary 3.17]{MR2960030}.  The Schubert variety $X_w$ associated with a fixed $\sfa , \ttJ$ may be described as follows.  

Given a nondegenerate symmetric quadric form $(\cdot,\cdot)$ on $\bC^{2n+1}$, fix a basis $\{ e_1 , \ldots , e_{2n+1}\}$ of $\bC^{2n+1}$ so that $(e_k , e_\ell) = (e_{n+k} , e_{n+\ell}) = (e_k , e_{2n+1} ) = (e_{n+k} , e_{2n+1}) = 0$, $(e_k , e_{n+\ell} ) = \d_{k\ell}$, for all $1 \le k,\ell\le n$, and $(e_{2n+1} , e_{2n+1})=1$.  The abelian subalgebra $\fg_{-1}$ is spanned by the root vectors
\begin{eqnarray*}
  e^1_k - e^{n+k}_{n+1}\,, & \hbox{with root} & -(\a_1 + \cdots+\a_{k-1}),\\
  e^1_{n+k} - e^{k}_{n+1}\,, & \hbox{with root} & 
  -\left( \a_1 + \cdots + \a_{k-1} + 2(\a_k + \cdots + \a_n) \right),\\
  e^1_{2n+1} - e_{n+1}^{2n+1}\,, & \hbox{with root} 
  & -(\a_1 + \cdots + \a_{n})\,,
\end{eqnarray*}
$2 \le k \le n$.  

Set $o = [e_1] \in \bP^{2n}$.  If $\sfa = 0$, then 
$$
  \fn_w = \langle e^1_k- e^{n+k}_{n+1} \ | \ k \le \ttj \rangle
  \quad\hbox{ and }\quad
  X_w = \overline{\texp(\fn_w) \cdot o }= 
  \bP \langle e_1 , \ldots,e_\ttj \rangle = \bP^{\ttj-1}\,.  
$$
If $\sfa=1$, then 
$$
  \fn_w = \langle e^1_k- e^{n+k}_{n+1} \,,\ e^1_{n+\ell} - e^{\ell}_{n+1} \,,\
   e^1_{2n+1} - e_{n+1}^{2n+1} \ | \ 1\le k\le n \,,\ \ttj < \ell \rangle
$$
and $X_w = \overline{\texp(\fn_w)\cdot o} =  Q^m \cap \bP\langle e_1 , \ldots , e_{n+1} , e_{n+\ttj+1} , \ldots , e_{2n+1} \rangle$.

%------------------------------------------------------------------------------
\subsection{Even dimensional quadrics \boldmath $Q^{2n-2} = D_n/P_1$\unboldmath} \label{S:aJeven}
%------------------------------------------------------------------------------
Set $m = 2n-2$.  There is a bijection between $W^\fp\backslash\{1,w_0\}$ and pairs $\sfa,\ttJ$ such that either
\begin{circlist}
\item  $\sfa=0$ and $\ttJ = \{ \ttj\} \subset \{2,\ldots,n\}$ or $\ttJ = \{n-1,n\}$; or
\item $\sfa=1$ and $\ttJ = \{\ttj\} \subset \{2,\ldots,n-2\}$ or $\ttJ = \{n-1,n\}$.
\end{circlist} 
See \cite[Corollary 3.17]{MR2960030}.  The Schubert variety $X_w$ associated with a fixed $\sfa , \ttJ$ may be described as follows.  

Given a nondegenerate symmetric quadric form $(\cdot,\cdot)$ on $\bC^{2n}$, fix a basis $\{ e_1 , \ldots , e_{2n}\}$ of $\bC^{2n}$ so that $(e_k , e_\ell) = (e_{n+k} , e_{n+\ell}) = 0$ and $(e_k , e_{n+\ell} ) = \d_{k\ell}$, for all $1 \le k,\ell\le n$.  The abelian subalgebra $\fg_{-1}$ is spanned by the root vectors
\begin{eqnarray*}
  e^1_k - e^{n+k}_{n+1}\,, & \hbox{with root} & -(\a_1 + \cdots+\a_{k-1}),\\
  e^1_{n+\ell} - e^{\ell}_{n+1}\,, & \hbox{with root} & 
  -\left( \a_1 + \cdots + \a_{\ell-1} 
  + 2(\a_\ell + \cdots + \a_{n-2}) +\a_{n-1}+\a_n\right),\\
  e^1_{2n} - e^{n}_{n+1}\,, & \hbox{with root} 
  & -(\a_1 + \cdots+\a_{n-2}) - \a_n,
\end{eqnarray*}
with $2 \le k \le n$ and $2\le \ell \le n-1$.  

Set $o = [e_1] \in \bP\bC^{2n}$.  First suppose that $\sfa=0$.  If $\ttJ = \{ \ttj\}$ with $2 \le \ttj \le n-2$, then $X_w = \bP^{\ttj-1}$.  If $\ttJ=\{n-1\}$ or $\ttJ=\{n\}$, then $X_w = \bP^{n-1}$.  If $\ttJ = \{ n-1 , n \}$, then $X_w = \bP^{n-2}$.

Next suppose that $\sfa=1$.  If $\ttJ = \{ \ttj \}$ with $2 \le \ttj \le n-2$, then $X_w = Q^m \cap \bP \langle e_1 , \ldots , e_{n+1} , e_{n+\ttj+1} , \ldots , e_{2n}  \rangle$.  If $\ttJ = \{n-1,n\}$, then $X_w = Q^m \cap \bP\langle e_1 , \ldots , e_{n+1} , e_{2n} \rangle$.

%------------------------------------------------------------------------------
\subsection{Grassmannians $\tGr(\tti,n+1) = A_n/P_\tti$} \label{S:aJA}
%------------------------------------------------------------------------------
There is a bijection between $W^\fp\backslash\{1,w_0\}$ and pairs $\sfa,\ttJ$ such that $\ttJ = \{ \ttj_\sfp \,,\,\ldots \,,\, \ttj_1 \,,\, \ttk_1 \,,\, \ldots \,,\, \ttk_\sfq \} \subset \{1,\ldots,n\} \backslash\{\tti\}$ is ordered so that 
$$
  1 \ \le \ \ttj_\sfp \,<\,\cdots \,<\, \ttj_1 \,< \ \tti \ < 
  \, \ttk_1 \,<\, \cdots \,<\, \ttk_\sfq \ \le \ n \,,
$$
and satisfying $\sfp,\sfq \in \{\sfa, \sfa+1\}$; see \cite[Corollary 3.17]{MR2960030}.  (Beware, these $\sfp,\sfq$ do not agree with those of \cite{MR2960030}, cf. Remark \ref{R:newaJ}.)  For convenience we set
$$
  \ttj_{\sfp+1} \, := \, 0 \, , \quad \ttj_0 \, := \, \tti \, =: \, \ttk_0 \,, 
  \quad \ttk_{\sfq+1} \, := \, n+1 \, .
$$

Given $\sfa,\ttJ$, the corresponding Schubert variety $X_w \subset \tGr(\tti,n+1)$ is described as follows.  Fix a basis $\{e_1,\ldots , e_{n+1} \}$ of $\bC^{n+1}$.  The abelian subalgebra $\fg_{-1}$ is spanned by the root vectors $\{ e^k_\ell \ | \ 1 \le k \le \tti < \ell \le n+1\}$; the corresponding roots are $-(\a_{k}+\cdots+\a_{\ell-1})$.   Define a filtration $\ttF_{\sfa+1} \subset \ttF_\sfa \subset \cdots \subset \ttF_1 \subset \ttF_0$ of $\bC^{n+1}$ by 
\begin{equation} \nonumber
  \ttF_\ell \ = \ \langle e_1 , e_2 , \ldots , e_{\ttj_\ell} \,,\, 
  e_{\tti+1} , e_{\tti+2} , \ldots , e_{\ttk_m} \rangle 
  \quad\hbox{with}\quad \ell+m = \sfa+1 \, .
\end{equation}
Set $o = [e_1 \wedge \cdots \wedge e_\tti ] \in \bP (\tw^\tti \bC^{n+1})$.  Then
\begin{equation}  \label{E:Xw_app}
  X_w \ = \ \{ E \in \tGr(\tti,n+1) \ | \ 
  \tdim( E \cap \ttF_\ell) \ge \ttj_{\ell} \,,
  \ 0 \le \ell \le \sfa+1 \} \,.
\end{equation}

\begin{example} \label{eg:Gr0}
Consider $X = \tGr(5,13) \simeq A_{12}/P_5$.  The marking $\ttJ = \{ 2,3,7,9,12 \}$ and integer $\sfa=2$ define the filtration $\ttF_3 \subset \ttF_2 \subset \ttF_1 \subset \ttF_0$ as
\begin{eqnarray*}
  \ttF_3 & = & \langle 0 \rangle \,, \quad 
  \ttF_2 \ = \ \langle e_1,e_2 \,,\, e_6 , e_7 \rangle \,,\\
  \ttF_1 & = & \langle e_1,e_2,e_3\,,\, e_6,e_7,e_8,e_9 \rangle \,,\quad
  \ttF_0 \ = \ \langle e_1 ,\ldots,e_5 \,,\, e_6 , \ldots e_{12} \rangle \, .
\end{eqnarray*}
The associated Schubert variety is the set of all $E \in \tGr(5,13)$ such that 
$$
  \tdim( E \cap \ttF_3 ) \ge 0 \,,\quad
  \tdim(E \cap \ttF_2 ) \ge 2 \,,\quad
  \tdim( E \cap \ttF_1) \ge 3 \,,\quad
  \tdim( E\cap \ttF_0) \ge 5 \, .
$$ 
\end{example}

%------------------------------------------------------------------------------
\subsection*{Partitions versus $\sfa,\ttJ$}
%------------------------------------------------------------------------------
It is well-known that Schubert varieties in $X=\tGr(\tti, n+1)$ are indexed by partitions 
\begin{equation} \label{E:Apart}
  \lambda \,=\, (\lambda_1,\ldots,\lambda_\tti) \in \bZ^\tti 
  \quad\hbox{such that}\quad
  1 \le \lambda_1 < \lambda_2 < \cdots < \lambda_\tti \le n+1 \,,
\end{equation}
cf. \cite[\S3.1.3]{MR1782635}.  Fix a flag $0 \subset F^1 \subset F^2 \subset\cdots\subset F^{n+1}$.  The corresponding Schubert variety is
\begin{equation}\label{E:Y1}
  Y_\lambda(F^\sbullet) \ := \ \{ E \in X \ | \ 
  \tdim( E \cap F^{\lambda_k}) \ge k \,,\ \forall \ k \} \,.
\end{equation}
Note that, if $\lambda_{k+1} = \lambda_k+1$, then the condition $\tdim(E\cap F^{\lambda_k}) \ge k$ is redundant; it is implied by $\tdim(E\cap F^{\lambda_{k+1}}) \ge k+1$.  To remove the redundancies, decompose $\lambda = \m_\sfp\cdots\m_1\m_0$ into maximal blocks of consecutive integers.  For example, if $\lambda = (2,3,4,7,8,12)$, then $\m_2 = (2,3,4)$, $\m_1 = (7,8)$ and $\m_0 = (12)$.  Let 
\begin{equation} \label{E:jA} 
  \ttj_\ell(\lambda) \ = \ |\m_\sfp\cdots\m_\ell|
\end{equation}
be the length of the sub-partition $\m_\sfp\cdots\m_\ell$.  (In all cases, $\ttj_0 = |\lambda| = \tti$.)  In the preceding example, $\ttj_2 = 3$, $\ttj_1 = 5$ and $\ttj_0=6$.  

\begin{remark} \label{R:tlambda}
Define $\tilde\lambda_k = \lambda_k-k$.  Then $0 \le \tilde\lambda_1 \le \tilde\lambda_2 \le \cdots \le \tilde\lambda_n\le n$.  Condense $\tilde\lambda$ by writing $\tilde\lambda = (\n_1^{c_1},\ldots,\n_t^{c_t})$ with $0 \le \n_1 < \n_2 < \cdots \n_t \le n$ and $0 < c_i$.  Then, $t = \sfp+1$, and $\ttj_\sfp = c_1$, $\ttj_{\sfp-1} = c_1 + c_2$, $\ttj_{\sfp-2} = c_1+c_2+c_3$, et cetera:  $\ttj_{\sfp+1-s} = c_1+\cdots+c_s$, and $\ttj_{\sfp-s} - \ttj_{\sfp+1-s} = c_{s+1}$.
\end{remark}

Note that $\lambda_{\ttj_\ell}$ is the last entry in the block $\m_\ell$.  That is,  
$$
  \{ \lambda_{\ttj_\sfp} , \ldots , \lambda_{{\ttj_1}} \} \ = \ 
  \{ \lambda_k \in \lambda \ | \ \lambda_k - \lambda_{k+1} > 1 \,,\ \ 1 \le k < \tti \} \,.
$$
The redundancy-free formulation of \eqref{E:Y1} is
\begin{equation} \label{E:Y2_app}
  Y_\lambda(F^\bullet) \ = \ \{ E \in X \ | \ 
  \tdim(E\cap F^{\lambda_{\ttj_\ell}}) \ge \ttj_\ell\,,\ 1 \le \ell \le \sfp \} \, .
\end{equation}
The following is \cite[Proposition 3.30]{MR2960030}.

\begin{lemma}[{\cite{MR2960030}}] \label{L:part}
Let $\lambda = (\lambda_1,\ldots,\lambda_\tti)$ be a partition satisfying \eqref{E:Apart}, and let $\lambda = \m_\sfp\cdots\m_1\m_0$ be the decomposition of $\lambda$ into maximal blocks of consecutive integers.  The pair $\sfa$, $\ttJ = \{\ttj_\sfp,\ldots,\ttj_1,\ttk_1,\ldots,\ttk_\sfq\}$ associated to the Schubert class $\xi_\lambda$ is given by \eqref{E:jA}, 
$$
  \{ \ttk_1 , \ldots , \ttk_\sfq \} \ = \ 
  \{ \tti - \ttj_\sfp + \lambda_{\ttj_\sfp} \,,\ldots ,\,
  \tti - \ttj_1+\lambda_{\ttj_1} \,,\, \lambda_\tti\} 
  \backslash\{\tti,n+1\} \,,
$$
and
$$
  \sfa \ = \ \left\{ \begin{array}{ll}
    \sfp  & \hbox{ if } \lambda_1 > 1\\
    \sfp-1  & \hbox{ if } \lambda_1 = 1 
  \end{array} \right\} \ = \ \left\{ \begin{array}{ll}
    \sfq \,, & \hbox{ if } \lambda_\tti = n+1 \\
    \sfq-1 \,, & \hbox{ if } \lambda_\tti < n+1 \,.
  \end{array} \right.
$$
Conversely, given $\sfa,\ttJ$, the associated partition $\lambda = \m_\sfp \cdots \m_1\m_0$ is given by 
$$
  \m_\ell \ = \ 
  ( \ttj_{\ell+1} + \ttk_m - \tti + 1 \,,\ldots,\, 
    \ttj_\ell + \ttk_m - \tti) \,,
$$
with $\ell+m = \sfa+1$.
\end{lemma}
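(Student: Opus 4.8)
The plan is to prove the equivalence by directly matching the two descriptions of the Schubert variety $X_w$ representing $\xi_w$: the representation-theoretic one in \eqref{E:Xw_app} and the partition one in \eqref{E:Y1}. Since a Schubert variety of a given type is only determined up to the action of $\tGL_{n+1}(\bC)$, which is transitive on complete flags, it is enough to produce \emph{one} flag realizing the correspondence; so after a change of basis I may take the reference flag $F^\sbullet$ of \eqref{E:Y1} to refine the partial flag $\ttF_{\sfa+1}\subset\cdots\subset\ttF_1\subset\ttF_0$ of \eqref{E:Xw_app}.

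First I would record $\tdim\ttF_\ell$. Because $\ttj_\ell\le\tti$, the two lists $e_1,\ldots,e_{\ttj_\ell}$ and $e_{\tti+1},\ldots,e_{\ttk_m}$ spanning $\ttF_\ell$ (with $\ell+m=\sfa+1$) are disjoint --- the second possibly empty --- so $\tdim\ttF_\ell=\ttj_\ell+\ttk_m-\tti$; together with the inequalities $0=\ttj_{\sfp+1}<\ttj_\sfp<\cdots<\ttj_1<\ttj_0=\tti=\ttk_0<\ttk_1<\cdots<\ttk_\sfq<\ttk_{\sfq+1}=n+1$ this shows $\ttF_\sbullet$ is a strictly increasing chain, so it refines to a complete flag. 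Next I would \emph{define} $\lambda=\m_\sfp\cdots\m_1\m_0$ by the stated formula $\m_\ell=(\ttj_{\ell+1}+\ttk_m-\tti+1,\ldots,\ttj_\ell+\ttk_m-\tti)$, $\ell+m=\sfa+1$, and check that it obeys \eqref{E:Apart}: each block is a run of consecutive integers by construction; the last entry of $\m_{\ell+1}$ and the first entry of $\m_\ell$ differ by $\ttk_{\sfa+1-\ell}-\ttk_{\sfa-\ell}+1\ge2$, so consecutive blocks are genuinely separated (hence the $\m_\ell$ are the \emph{maximal} consecutive blocks); the smallest entry is $\lambda_1=\ttk_{\sfa+1-\sfp}-\tti+1$, which is $1$ when $\sfa+1-\sfp=0$ (so $\ttk_0=\tti$) and is $\ge2$ otherwise; and the largest is $\lambda_\tti=\ttk_{\sfa+1}\le n+1$. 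Thus $\lambda_1=1$ iff $\sfa=\sfp-1$ and $\lambda_1>1$ iff $\sfa=\sfp$ --- the first dichotomy --- and by construction $\ttj_\ell=|\m_\sfp\cdots\m_\ell|$ is \eqref{E:jA}, with $\lambda_{\ttj_\ell}=\ttj_\ell+\ttk_m-\tti=\tdim\ttF_\ell$ the last entry of $\m_\ell$.

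With these identifications I would match conditions. The condition $\tdim(E\cap\ttF_\ell)\ge\ttj_\ell$ of \eqref{E:Xw_app} reads $\tdim(E\cap F^{\lambda_{\ttj_\ell}})\ge\ttj_\ell$; for $1\le\ell\le\sfp$ these are precisely the nonredundant conditions of \eqref{E:Y1} (those of \eqref{E:Y1} at indices $k$ not of the form $\ttj_\ell$ are implied by the one at the next larger $\ttj_\ell$, since there $\lambda_{k+1}=\lambda_k+1$); the $\ell=\sfp+1$ condition, present only when $\sfa=\sfp$, is vacuous because $\ttj_{\sfp+1}=0$; and the $\ell=0$ condition says $E\subseteq\ttF_0=F^{\lambda_\tti}$, which is the $k=\tti$ condition of \eqref{E:Y1} and is vacuous exactly when $\tdim\ttF_0=\ttk_{\sfa+1}=n+1$, i.e. when $\lambda_\tti=n+1$. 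Hence $X_w=Y_\lambda(F^\sbullet)$, which gives the ``conversely'' part. The forward direction is obtained by inverting: $\ttj_\ell(\lambda)$ is \eqref{E:jA}, and $\ttk_m=\tti-\ttj_\ell+\lambda_{\ttj_\ell}$ for $\ell+m=\sfa+1$ together with $\lambda_\tti$ supply the candidates $\tti-\ttj_\sfp+\lambda_{\ttj_\sfp},\ldots,\tti-\ttj_1+\lambda_{\ttj_1},\lambda_\tti$; among these the value $\tti$ occurs iff $\lambda_1=1$ (it is then $\ttk_0$), and the value $n+1$ occurs iff $\lambda_\tti=n+1$ (it is then $\ttk_{\sfq+1}$), and deleting these two conventional endpoints yields exactly $\{\ttk_1,\ldots,\ttk_\sfq\}$. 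Counting the $\sfp+1$ candidates then gives $\sfq=\sfp+1$, reduced by one if $\lambda_1=1$ and by one if $\lambda_\tti=n+1$; combined with $\sfa=\sfp$, reduced by one if $\lambda_1=1$, this is exactly the second dichotomy, $\sfa=\sfq$ if $\lambda_\tti=n+1$ and $\sfa=\sfq-1$ if $\lambda_\tti<n+1$.

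The argument is purely combinatorial; the only real obstacle is keeping the bookkeeping straight, in particular tracking the three boundary conventions $\ttj_{\sfp+1}=0$, $\ttj_0=\ttk_0=\tti$ and $\ttk_{\sfq+1}=n+1$, so that the vacuous conditions on each side are correctly identified and the endpoints $\{\tti,n+1\}$ deleted in the formula for $\{\ttk_1,\ldots,\ttk_\sfq\}$ are accounted for; once that is organized (and one recalls that $w\notin\{1,w_0\}$ rules out the degenerate partitions $\lambda=(1,\ldots,\tti)$ and $\lambda=(n+2-\tti,\ldots,n+1)$), everything follows from the single dimension identity $\tdim\ttF_\ell=\ttj_\ell+\ttk_m-\tti$.
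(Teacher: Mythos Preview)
Your argument is correct. The paper does not actually prove this lemma here: it is quoted verbatim as \cite[Proposition~3.30]{MR2960030}, with the surrounding text in Section~\ref{S:aJA} only setting up the redundancy-free form \eqref{E:Y2_app} of $Y_\lambda(F^\sbullet)$. Your proof is exactly the direct comparison of \eqref{E:Xw_app} with \eqref{E:Y2_app} via the single dimension identity $\tdim\ttF_\ell=\ttj_\ell+\ttk_m-\tti$ (with $\ell+m=\sfa+1$), together with careful bookkeeping of the boundary conventions $\ttj_{\sfp+1}=0$, $\ttj_0=\ttk_0=\tti$, $\ttk_{\sfq+1}=n+1$. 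This is the natural argument, and is in the same spirit as the analogous computations the paper \emph{does} carry out explicitly for $\tLG(n,2n)$ in Section~\ref{S:aJC} (``A comparison of this with \eqref{E:XwCb} yields $\ttJ=\{\ttj_\sfp(\lambda),\ldots,\ttj_1(\lambda)\}$, and $\lambda_{\ttj_\ell}=\tdim\ttF_\ell=\ldots$''); so your write-up effectively supplies the omitted Grassmannian case in the same style.
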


\begin{example}
In Example \ref{eg:Gr0}, we have $\lambda = (3,4,7,11,12)$.
\end{example}

\begin{remark}
Together \cite[Proposition 3.30]{MR2960030} and \cite[Theorem 9.3.1]{MR1782635} imply that the integer $\sfa(w)$ is the number of irreducible components of $\tSing(X_w)$.  The relationship between $\sfa(w)$ and the number of irreducible components in $\tSing(X_w)$ for the other (Lagrangian Grassmannian, Spinor variety, Cayley plane and Freudenthal variety) irreducible, cominuscule $X$ is given by \cite[Corollary 3.6 and Table 1]{sing}.
\end{remark}

%------------------------------------------------------------------------------
\subsection{Lagrangian Grassmannians $\tLG(n,2n) = C_n/P_n$} \label{S:aJC}
%------------------------------------------------------------------------------

There exists a bijection between $W^\fp\backslash\{1,w_0\}$ and pairs $\sfa \ge0$ and $\ttJ = \{ \ttj_\sfp , \ldots , \ttj_1 \} \subset \{1,\ldots,n-1\}$ satisfying 
\begin{equation}\label{E:CD_J}
  1 \ \le \ \ttj_\sfp \,<\,\cdots \,<\, \ttj_1 \  \le \ n-1
\end{equation}
and 
$$
  \sfp \ \in \ \{\sfa, \sfa+1\}\,; 
$$
see  \cite[Corollary 3.17]{MR2960030}.  (These $\ttj_\ell$ have the opposite order of those in \cite{MR2960030}.)  For convenience we set
\begin{equation}\label{E:CDendpts}
  \ttj_{\sfp+1} \, := \, 0 \, , \quad \ttj_0 \, := \, n \, .
\end{equation}

To describe the Schubert variety $X_w$, fix a nondegenerate skew-symmetric bilinear form $( \cdot , \cdot )$ on $\bC^{2n}$, and basis $\{ e_1 , \ldots , e_{2n}\}$ of $\bC^{2n}$ satisfying $(e_a,e_b) = 0 = (e_{n+a},e_{n+b})$ and $(e_a,e_{n+b}) = \d_{ab}$ for all $1 \le a,b\le n$.  Then $X=\tLG(n,2n)$ is the $C_n$--orbit of $\langle e_1,\ldots,e_n\rangle$.  The abelian subalgebra $\fg_{-1}$ (which may be identified with $n$-by-$n$ symmetric matrices) is spanned by the root vectors $\{e^j_{n+k} + e^k_{n+j} \ | \ 1\le j \le k \le n\}$, with roots $-(\a_j+\cdots+\a_{k-1})-2(\a_k+\cdots+\a_n)$.  Define a filtration $\ttF_\sfp \subset \cdots \subset \ttF_{1} \subset \ttF_0$ of $\bC^{2n}$ by 
\begin{equation} \label{E:XwCa}
  \ttF_\ell \ = \ 
  \langle e_1 , \ldots , e_{\ttj_\ell} \,,\, 
          e_{n+\ttj_m+1},\ldots,e_{2n} \rangle 
\end{equation}
with 
\begin{equation} \label{E:XwCc}
  \ell+m \ =\  \sfa+1 \,.
\end{equation}
Set $o = [e_1\wedge\cdots\wedge e_n] \in \bP(\tw^n\bC^{2n})$.  Then
\begin{equation} \label{E:XwCb}
  X_w \ = \ \{ E\in X \ | \ 
  \tdim(E\cap \ttF_\ell) \ge \ttj_\ell \, , \ 
  \forall \ 0 \le \ell \le \sfp \} \,.
\end{equation}

\begin{example}\label{eg:LG0}
Consider $X = \tLG(5,10) \simeq C_5/P_5$.  The marking $\ttJ = \{ 1,2,4\}$ and integer $\sfa=3$ define the filtration $\ttF_3 \subset \ttF_2 \subset \ttF_1 \subset \ttF_0$ of $\bC^{10}$ as 
\begin{eqnarray*}
  \ttF_3 & = & \langle e_1 \,,\, e_{10} \rangle \,,\quad 
  \ttF_2 \ = \ \langle e_1 , e_2 \,,\, e_8 , e_9 , e_{10} \rangle \,,\\
  \ttF_1 & = & \langle e_1,\ldots,e_4 \,,\, e_7,\ldots,e_{10} \rangle \,,\quad
  \ttF_0 \ = \ \bC^{10} \, .
\end{eqnarray*}
The associated Schubert variety is the set of all $E \in \tLG(5,10)$ such that
$$
  \tdim(E\cap \ttF_3) \ge 1 \,,\quad
  \tdim(E\cap \ttF_2) \ge 2 \,,\quad
  \tdim(E\cap \ttF_1) \ge 4 \, .
$$
\end{example}

%------------------------------------------------------------------------------
\subsection*{Partitions versus $\sfa,\ttJ$}
%------------------------------------------------------------------------------

It is well-known that Schubert varieties in $X = \tLG(n,2n)$ are indexed by partitions $\lambda = (\lambda_1,\lambda_2,\ldots,\lambda_n)$ such that  
\begin{subequations}\label{SE:Cpart}
\begin{eqnarray}
  & & 1 \le \lambda_1 < \lambda_2 < \cdots < \lambda_n \le 2n \,, \quad \hbox{and} \\
\label{E:Cpart}
  & & \lambda_i \in \lambda \quad\hbox{if and only if} \quad 
  2n+1-\lambda_i\not\in\lambda \,, 
\end{eqnarray} 
\end{subequations}
cf. \cite[\S9.3]{MR1782635}.  Fix an isotropic flag $F^\sbullet$ in $\bC^{2n}$ by specifying $F^k = \langle e_1,\ldots,e_k\rangle$ and $(F^k,F^{2n-k}) = 0$ for $1\le k \le n$.  The corresponding Schubert variety is given by \eqref{E:Y1}.  As was the case in Section \ref{S:aJA}, this formulation is redundant.  Again, to remove the redundancies, decompose $\lambda = \m_\sfp\cdots\m_1\m_0$ into maximal blocks of consecutive integers.  Let $\ttj_\ell(\lambda) = |\m_\sfp\cdots\m_\ell|$ be the length of the sub-partition $\m_\sfp\cdots\m_s$.  (In all cases, $\ttj_0 = n$.)  Then the redundancy-free formulation is \eqref{E:Y2_app}.  A comparison of this with \eqref{E:XwCb} yields $\ttJ = \{ \ttj_\sfp(\lambda) , \ldots , \ttj_1(\lambda) \}$, and $\lambda_{\ttj_\ell} = \tdim\,\ttF_\ell = \ttj_\ell+n-\ttj_m$, for $\ell+m=\sfa+1$; note that $\lambda_{\ttj_\ell} + \lambda_{\ttj_m} = 2n$.  In order to determine the value of $\sfa$, we consider two cases:  
\begin{circlist}
\item First, suppose that $\lambda_1>1$.  By \eqref{E:Cpart}, this is equivalent to $\lambda_n=\lambda_{\ttj_0}=2n$.  Additionally, \eqref{E:Cpart} yields $\lambda_{\ttj_\ell}+\lambda_{\ttj_m}=2n$ when $\ell+m=\sfp+1$.  Therefore, $\sfa=\sfp$. 
\item Second, suppose that $\lambda_1=1$, equivalently, $\lambda_n=\lambda_{\ttj_0}<2n$.  Equation \eqref{E:Cpart} implies $\lambda_{\ttj_\ell} + \lambda_{\ttj_m} = 2n$ when $\ell+m = \sfp$.  Thus, $\sfp=\sfa+1$.
\end{circlist}
As an example, Table \ref{t:C5P5} lists the partitions $\lambda$ and corresponding $\sfa:\ttJ$ values for the Schubert varieties in $\tLG(5,10)$.
\begin{small}
\begin{table}
\caption[Lagrangian Grassmannian $\tLG(5,10)$]{Schubert varieties of $\tLG(5,10)$, Schur rigid classes indicated by $\ast$.}
\label{t:C5P5}
\renewcommand{\arraystretch}{1.2}
\begin{tabular}{|c|c||c|c||c|c|}
\hline
   $\lambda$ & $\sfa:\ttJ$ & $\lambda$ & $\sfa:\ttJ$ 
   & $\lambda$ & $\sfa:\ttJ$ \\ \hline \hline
      ${}^\mathbf{\ast}(1,2,3,4,5)$ &         & 
      $(1,2,3,4,6)$ & $0:4$ & 
      $(1,2,3,5,7)$ & $1:3,4$ \\ \hline
      $(1,2,4,5,8)$ & $1:2,4$ & 
      ${}^\mathbf{\ast}(1,2,3,6,7)$ & $0:3$ & 
      $(1,3,4,5,9)$ & $1:1,4$ \\ \hline
      $(1,2,4,6,8)$ & $2:2,3,4$ & 
      ${}^\mathbf{\ast}(2,3,4,5,10)$ & $1:4$ & 
      $(1,3,4,6,9)$ & $2:1,3,4$ \\ \hline
      $(1,2,5,7,8)$ & $1:2,3$ & 
      $(2,3,4,6,10)$ & $2:3,4$ & 
      $(1,3,5,7,9)$ & $3:1,2,3,4$ \\ \hline
      ${}^\mathbf{\ast}(1,2,6,7,8)$ & $0:2$ & 
      $(2,3,5,7,10)$ & $3:2,3,4$ & 
      ${}^\mathbf{\ast}(1,4,5,8,9)$ & $1:1,3$ \\ \hline
      $(1,3,6,7,9)$ & $2:1,2,4$ & 
      $(2,4,5,8,10)$ & $3:1,3,4$ & 
      ${}^\mathbf{\ast}(2,3,6,7,10)$ & $2:2,4$ \\ \hline
      $(1,4,6,8,9)$ & $2:1,2,3$ & 
      ${}^\mathbf{\ast}(3,4,5,9,10)$ & $1:3$ & 
      $(2,4,6,8,10)$ & $4:1,2,3,4$ \\ \hline
      $(1,5,7,8,9)$ & $1:1,2$ & 
      $(3,4,6,9,10)$ & $2:2,3$ & 
      $(2,5,7,8,10)$ & $3:1,2,4$ \\ \hline
      ${}^\mathbf{\ast}(1,6,7,8,9)$ & $0:1$ & 
      $(3,5,7,9,10)$ & $3:1,2,3$ & 
      $(2,6,7,8,10)$ & $2:1,4$ \\ \hline
      ${}^\mathbf{\ast}(4,5,8,9,10)$ & $1:2$ & 
      $(3,6,7,9,10)$ & $2:1,3$ & 
      $(4,6,8,9,10)$ & $2:1,2$ \\ \hline
      $(5,7,8,9,10)$ & $1:1$ & 
      ${}^\mathbf{\ast}(6,7,8,9,10)$ &       &  &  \\
\hline
\end{tabular}
\end{table}
\end{small}
The preceding discussion may be summarized as follows.

\begin{lemma} \label{L:Cpart}
Let $\lambda = (\lambda_1,\ldots,\lambda_n)$ be a partition satisfying \eqref{SE:Cpart}.  Let $\lambda = \m_\sfp\cdots\m_1\m_0$ be a decomposition of $\lambda$ into $\sfp+1$ maximal blocks of consecutive integers.  Then $\ttJ(\lambda) = \{ \ttj_\sfp(\lambda) , \ldots , \ttj_1(\lambda)\}$ is given by \eqref{E:jA}, and 
$$
  \sfa(\lambda) \ = \ \left\{ \begin{array}{ll}
  \sfp - 1 & \hbox{ if } \lambda_1 = 1 \\
  \sfp & \hbox{ if } \lambda_1 > 1 \, .
  \end{array}\right.
$$
Conversely, given $\sfa$ and $\ttJ=\{\ttj_\sfp,\cdots,\ttj_1\}$ we construct $\lambda(\sfa,\ttJ)=\m_\sfp(\sfa,\ttJ)\cdots\m_0(\sfa,\ttJ)$ by
\begin{equation} \label{E:Cblock}
  \m_\ell(\sfa,\ttJ) \ = \
  (n+1+\ttj_{\ell+1}-\ttj_m \,,\, \ldots \,,\, n+\ttj_\ell-\ttj_m ) \,,
\end{equation}
with $\ell+m=\sfa+1$.
\end{lemma}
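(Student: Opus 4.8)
The plan is to carry out the comparison announced in the paragraph preceding the lemma, in two main steps: first match the marking $\ttJ$ and read off the dimensions of the flag $\ttF^\sbullet$ — which simultaneously produces the reconstruction formula \eqref{E:Cblock} — and then pin down the integer $\sfa$ using the isotropy constraint \eqref{E:Cpart}.

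For the first step, note that $\sfa(\lambda)$ and $\ttJ(\lambda)$ are, by definition, the data attached to the element $w \in W^\fp$ with $\xi_w = \xi_\lambda$, so the variety $X_w$ of \eqref{E:XwCb} is a symplectic translate of the Schubert variety $Y_\lambda$ of \eqref{E:Y1}. I would present each of $Y_\lambda$ and $X_w$ by a \emph{reduced} system of incidence conditions $\tdim(E \cap G) \ge a$ relative to subspaces $G$ of an isotropic flag: for $Y_\lambda$ this is \eqref{E:Y2_app}, and for $X_w$ it is the $\ell \ge 1$ part of \eqref{E:XwCb} — the $\ell = 0$ condition $E \subseteq \ttF_0$ is automatic when $\ttF_0 = \bC^{2n}$, and is subsumed by the $\ell = \sfp$ condition together with the Lagrangian property $E = E^\perp$ otherwise. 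The pairs (dimension of $G$, required intersection number) occurring in such a reduced system are an invariant of the Schubert cell, hence unchanged under translation; that the two systems above are reduced follows, respectively, from the construction of \eqref{E:Y2_app} and from the strict inequalities \eqref{E:CD_J} via \eqref{E:XwCa}, which force $\tdim\ttF_\sfp < \cdots < \tdim\ttF_1$. Matching the two reduced systems then gives $\ttJ(\lambda) = \{\ttj_\sfp(\lambda),\ldots,\ttj_1(\lambda)\}$ with $\ttj_\ell(\lambda)$ as in \eqref{E:jA}, together with
\[
  \lambda_{\ttj_\ell} \ = \ \tdim\,\ttF_\ell \ = \ \ttj_\ell + n - \ttj_m\,,
  \qquad \ell + m = \sfa + 1\,,
\]
where the last equality is \eqref{E:XwCa} with the endpoint conventions \eqref{E:CDendpts} (note $\lambda_{\ttj_\ell} + \lambda_{\ttj_m} = 2n$ automatically, consistent with isotropy). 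Since the block $\m_\ell$ is the run of $\ttj_\ell - \ttj_{\ell+1}$ consecutive integers ending in $\lambda_{\ttj_\ell}$, this is precisely \eqref{E:Cblock}; running the same computation backwards reconstructs $\lambda$ from an admissible pair $(\sfa, \ttJ)$, and that the reconstructed $\lambda$ satisfies \eqref{SE:Cpart} — strict monotonicity from \eqref{E:CD_J}, and the isotropy condition \eqref{E:Cpart} from the mirror symmetry $j \mapsto 2n+1-j$ interchanging the blocks \eqref{E:Cblock} with the gaps between them — is a routine check.

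It remains to fix $\sfa$. Since $\lambda$ has $\sfp + 1$ blocks and $|\ttJ| = \sfp$, the admissibility relation $\sfp \in \{\sfa, \sfa+1\}$ leaves only $\sfa \in \{\sfp - 1, \sfp\}$. Specializing the displayed identity to $\ell = \sfp$, so $m = \sfa + 1 - \sfp \in \{0, 1\}$, and using that the first block $\m_\sfp$ has $\ttj_\sfp$ terms and begins with $\lambda_1$ (so $\lambda_{\ttj_\sfp} = \lambda_1 + \ttj_\sfp - 1$), the identity becomes $\lambda_1 - 1 = n - \ttj_m$. If $m = 0$ then $\ttj_0 = n$ forces $\lambda_1 = 1$ and $\sfa = \sfp - 1$; if $m = 1$ then $\ttj_1 \le n - 1$ forces $\lambda_1 = n + 2 - \ttj_1 > 1$ and $\sfa = \sfp$. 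This is the asserted dichotomy. (Equivalently, \eqref{E:Cpart} gives $\lambda_1 = 1 \Leftrightarrow 2n \notin \lambda \Leftrightarrow \ttF_0 \neq \bC^{2n}$, and $\ttF_0 = \bC^{2n}$ exactly when $\ttj_{\sfa+1} = 0$, i.e. $\sfa = \sfp$.)

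I expect the main obstacle to be the index bookkeeping around the relation $\ell + m = \sfa + 1$ and the endpoint conventions \eqref{E:CDendpts} — in particular, matching the two reduced incidence systems in the correct order, handling the extreme terms ($\ell = 0$ and $\ell = \sfp$) correctly in each of the two cases $\lambda_1 = 1$ and $\lambda_1 > 1$, and, for the converse direction, verifying that the reconstructed blocks are disjoint and together satisfy \eqref{E:Cpart}. A secondary point needing care is the claim that the listed incidence systems are genuinely reduced (so that the matching is legitimate); if one prefers not to invoke invariance of the reduced system, this can be replaced by a direct computation of $\tdim(E \cap \ttF_\ell)$ for $E$ in the open cell $N_w \cdot o$, using the explicit root vectors spanning $\fn_w = \fg_{-1,0} \oplus \cdots \oplus \fg_{-1,-\sfa}$.
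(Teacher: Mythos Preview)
Your proposal is correct and follows essentially the same approach as the paper's own argument (which is the discussion in the paragraphs immediately preceding the lemma statement): pass to the redundancy-free form \eqref{E:Y2_app}, match it against \eqref{E:XwCb} to read off $\ttJ$ and the identity $\lambda_{\ttj_\ell} = \tdim\,\ttF_\ell = \ttj_\ell + n - \ttj_m$, then use the isotropy constraint \eqref{E:Cpart} to split into the cases $\lambda_1 = 1$ and $\lambda_1 > 1$ and pin down $\sfa$. Your treatment is somewhat more careful than the paper's---you justify why the two incidence systems are genuinely reduced (so the matching is legitimate) and you extract $\sfa$ by specializing to $\ell = \sfp$ rather than by the paper's symmetry argument $\lambda_{\ttj_\ell} + \lambda_{\ttj_m} = 2n$---but these are equivalent computations, and your parenthetical at the end is exactly the paper's reasoning.
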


%------------------------------------------------------------------------------
\subsection{Spinor varieties $\cS_n = D_n/P_n$} \label{S:aJD}
%------------------------------------------------------------------------------

Given $\sfa = \sfa(w)$ and $\ttJ = \ttJ(w)$, note that 
$$
  \a_{n-1}(Z_w) = 0 \ \hbox{ if } n-1\not\in\ttJ \,,\quad
  \hbox{ and } \quad  \a_{n-1}(Z_w) = 1 \ \hbox{ if } n-1\in\ttJ \,.
$$
Define 
\begin{equation} \label{E:r}
  \sfr \ = \ \left\lceil \half \left(\sfa + \a_{n-1}(Z_w)\right) \right\rceil \ = \ 
  \left\{ \renewcommand{\arraystretch}{1.2} \begin{array}{ll}
    \lceil \sfa/2 \rceil & \hbox{ if } n-1\not\in\ttJ \,,\\
    \lfloor \sfa/2 \rfloor + 1 & \hbox{ if } n-1\in\ttJ \,;
  \end{array} \right.
\end{equation}
see also \eqref{E:r2}.  There exists a bijection between $W^\fp\backslash\{1,w_0\}$, and pairs $\sfa \ge0$ and $\ttJ = \{ \ttj_\sfp , \ldots , \ttj_1 \} \subset \{1,\ldots,n-1\}$, ordered by \eqref{E:CD_J} and satisfying
\begin{equation} \label{E:Dr}
  \sfp - \a_{n-1}(Z_w) \,\in\, \{ \sfa , \sfa+1 \}\,, \quad\hbox{and}\quad
  2 \le \ttj_\sfr - \ttj_{\sfr+1} \ \hbox{ when } \ \sfr > \a_{n-1}(Z_w) \, ;
\end{equation}
see  \cite[Corollary 3.17]{MR2960030}.  (These $\ttj_\ell$ have the opposite order of those in \cite{MR2960030}.)   We maintain the convention \eqref{E:CDendpts}.

To describe the Schubert variety $X_w$, fix a nondegenerate symmetric bilinear form $(\cdot,\cdot)$ on $\bC^{2n}$ and basis $\{ e_1 , \ldots , e_{2n}\}$ of $\bC^{2n}$ satisfying $(e_a,e_b) = 0 = (e_{n+a},e_{n+b})$ and $(e_a,e_{n+b}) = \d_{ab}$ for all $1 \le a,b\le n$.   Our convention is that $\cS_n$ is the $D_n$--orbit of $\langle e_1,\ldots,e_n\rangle$.  The abelian subalgebra $\fg_{-1}$ (which may be identified with $n$-by-$n$ skew-symmetric matrices) is spanned by root vectors $\{ e^j_{n+k} - e^k_{n+j} \ | \ 1 \le j < k \le n \}$.  The corresponding roots are $-(\a_j+\cdots+\a_{n-2})-\a_n$, if $k=n$; $-(\a_j + \cdots + \a_n)$, if $k=n-1$; and $-(\a_j+\cdots+\a_{k-1})-2(\a_k+\cdots+\a_{n-2})-\a_{n-1}-\a_n$, if $k<n-1$.  Define a filtration $\ttF_\sfp \subset \cdots \subset \ttF_{1} \subset \ttF_0$ of $\bC^{2n}$ by \eqref{E:XwCa} with
\begin{equation}
  \ell+m \ = \
  \left\{ \begin{array}{l} \sfa+1 \quad\hbox{if } n-1\not\in\ttJ \,,\\
           \sfa+2 \quad\hbox{if } n-1\in\ttJ \end{array}\right\}
  \ = \ \sfa+1 + \a_{n-1}(Z_w) \, .
\end{equation}
Set $o = [e_1\wedge\cdots\wedge e_n] \in \bP(\tw^n\bC^{2n})$.  Then the Schubert variety $X_w$ is given by \eqref{E:XwCb}.

%------------------------------------------------------------------------------
\subsection*{Partitions versus $\sfa,\ttJ$}
%------------------------------------------------------------------------------

It is well-known that the Schubert varieties of $X=\cS_n$ are indexed by partitions $\lambda = (\lambda_1,\lambda_2,\ldots,\lambda_n)$ satisfying \eqref{SE:Cpart} and 
\begin{equation} \label{E:Dpart}
  \# \{ i \ | \ \lambda_i > n \} \quad\hbox{is even,}
\end{equation}  
cf. \cite[\S9.3]{MR1782635}.  Fix an isotropic flag $F^\sbullet$ in $\bC^{2n}$ by specifying $F^k = \langle e_1,\ldots,e_k\rangle$ and $(F^k,F^{2n-k}) = 0$ for $1\le k \le n$.  The corresponding Schubert variety is given by \eqref{E:Y1}.

We define $\ttJ(\lambda)$ as in \eqref{E:jA}, with the following modification of the block decomposition.  In the block decomposition $\lambda = \hat\m_p\cdots\hat\m_1\hat\m_0$, the integers $n-1,n+1$ are considered `consecutive' and are placed in the same $\hat\m_s$--block; likewise, the integers $n,n+2$ are `consecutive.'  For example, if $n=5$, then $\lambda = (2,3,4,6,10)$ has block decomposition $\hat\m_1\hat\m_0 = (2,3,4,6)(10)$; likewise, $\lambda = (1,2,5,7,8)$ has block decomposition $\hat\m_1\hat\m_0 = (1,2)(5,7,8)$.

As before,
\begin{equation} \label{E:JDn}
  \ttj_\ell(\lambda) \ = \ | \hat\m_\sfp\cdots\hat\m_\ell |\,.
\end{equation}
Define
\begin{equation}\label{E:aDn}
  \sfa \ = \ \left\{ \begin{array}{ll}
  \sfp - 2 & 
  \hbox{ if } \lambda_1 = 1 \hbox{ and } \lambda_n - \lambda_{n-1} > 1 \,,\\
  \sfp - 1 & 
  \hbox{ if } \lambda_1 = 1 \hbox{ and } \lambda_n - \lambda_{n-1} = 1 \,,
  \ \hbox{or } \lambda_1 > 1 \hbox{ and } \lambda_n - \lambda_{n-1} > 1 \,,\\
  \sfp & 
  \hbox{ if } \lambda_1 > 1 \hbox{ and }\lambda_n - \lambda_{n-1} = 1  \,.
  \end{array}\right.
\end{equation}
As an example, Table \ref{t:D6P6} lists the partitions and corresponding $\sfa:\ttJ$ and $\sfr$ values for the Schubert varieties of $\cS_6 = \tSpin_{12}\bC/P_6$.  The discussion above yields the following.

\begin{lemma} \label{L:Dpart}
Given a partition $\lambda$ indexing a Schubert variety \eqref{E:Y1} in $\cS_n = D_n/P_n$, the set $\ttJ(\lambda) = \{ \ttj_\sfp(\lambda) , \ldots , \ttj_1(\lambda)\}$ is given by \eqref{E:JDn}, and $\sfa(\lambda)$ is given by \eqref{E:aDn}.  

Conversely, given $\sfa$ and $\ttJ=\{\ttj_\sfp,\cdots,\ttj_1\}$, we construct $\lambda(\sfa,\ttJ)$ as follows.  Let $\lambda' = \m_\sfp\cdots\m_1\m_0$ be given by
\eqref{E:Cblock}, with $\ell+m=\sfa+1+\a_{n-1}(Z_w)$.  If $\lambda'$ satisfies \eqref{E:Dpart}, then $\lambda = \lambda'$.  If \eqref{E:Dpart} fails for $\lambda'$, then we modify the partition as follows: precisely one of $\{ n,n+1\}$ belongs to $\lambda'$, denote this element by $a'$, and the other by $a$.  Then $\lambda$ is obtained from $\lambda'$ by replacing $a'$ with $a$.  
\end{lemma}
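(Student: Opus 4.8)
The plan is to compare the two descriptions of the Schubert variety $X_w$ — the partition description \eqref{E:Y1} and the representation-theoretic description \eqref{E:XwCb} — along the lines of the proofs of Lemmas \ref{L:part} and \ref{L:Cpart}, the one new ingredient being the modified block decomposition forced by the geometry near the middle of the isotropic flag. First I would reduce \eqref{E:Y1} to the redundancy-free form \eqref{E:Y2_app}: the genuinely consecutive case $\lambda_{k+1} = \lambda_k+1$ is the usual codimension-one reduction, while for the new cases one uses $F^{n+1} = (F^{n-1})^\perp$, so that $F^{n+1}/F^{n-1}$ carries a nondegenerate symmetric form. The image of the isotropic subspace $E\cap F^{n+1}$ in this quotient is again isotropic, hence at most one-dimensional, so $\tdim(E\cap F^{n+1}) \le \tdim(E\cap F^{n-1})+1$ and the incidence condition indexed by $n-1$ is implied by the one indexed by $n+1$; the pair $n, n+2$ is handled identically via $F^{n+2} = (F^{n-2})^\perp$ and $(F^n)^\perp = F^n \subset F^{n+2}$. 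This justifies declaring $n-1, n+1$ (resp. $n, n+2$) consecutive, so that $Y_\lambda$ is cut out by the essential conditions $\tdim(E\cap F^{\lambda_{\ttj_\ell}}) \ge \ttj_\ell$, $1\le\ell\le\sfp$, with $\ttj_\ell = |\hat\m_\sfp\cdots\hat\m_\ell|$.

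Next I would match these against the essential conditions read off from \eqref{E:XwCb}. The jump positions give $\ttJ(\lambda) = \{\ttj_\sfp(\lambda),\ldots,\ttj_1(\lambda)\}$, which is \eqref{E:JDn}, and matching the dimensions of the flag members gives $\lambda_{\ttj_\ell} = \tdim\,\ttF_\ell = \ttj_\ell + n - \ttj_m$, where $\ell+m$ is the index relation $\sfa+1+\a_{n-1}(Z_w)$ of the $\cS_n$ filtration \eqref{E:XwCa}; in particular $\lambda_{\ttj_\ell}+\lambda_{\ttj_m} = 2n$ exactly when $\ell+m = \sfa+1+\a_{n-1}(Z_w)$. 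On the other hand, the symmetry \eqref{E:Cpart} of $\lambda$, argued as in the proof of Lemma \ref{L:Cpart}, pairs the block-end entries to sum $2n$ with pairing index $\sfp$ when $\lambda_1 = 1$ and $\sfp+1$ when $\lambda_1 > 1$ (equivalently, by \eqref{E:Cpart}, according as $\lambda_n < 2n$ or $\lambda_n = 2n$). Comparing the two pairings forces $\sfa+1+\a_{n-1}(Z_w)$ to be $\sfp$ or $\sfp+1$ accordingly. Separately, $\a_{n-1}(Z_w) = 1$ iff $n-1\in\ttJ$ iff $\ttj_1 = n-1$ iff the top block $\hat\m_0$ is a singleton; and since the parity condition \eqref{E:Dpart} forbids $\{\lambda_{n-1},\lambda_n\}$ from equalling $\{n-1,n+1\}$ or $\{n,n+2\}$, this occurs iff $\lambda_n - \lambda_{n-1} > 1$. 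Feeding the two alternatives for $\lambda_1$ and the two for $\a_{n-1}(Z_w)$ into the relation above yields the four cases of \eqref{E:aDn}.

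For the converse I would reverse this. Given $\sfa,\ttJ$, the formula \eqref{E:Cblock} with the shifted relation $\ell+m = \sfa+1+\a_{n-1}(Z_w)$ produces a partition $\lambda'$ satisfying \eqref{SE:Cpart} by the same (routine) arithmetic as in the $C_n$ case. If $\lambda'$ also satisfies the parity condition \eqref{E:Dpart}, then $\lambda = \lambda'$; otherwise precisely one of $n, n+1$ lies in $\lambda'$ and we replace it by the other. I would check that this replacement changes neither the block lengths $\ttj_\ell$ — its block-neighbours among $\{n-1, n+2\}$ are unaffected, precisely because $n-1 \sim n+1$ and $n \sim n+2$ are the special adjacencies — nor the incidence locus inside $\cS_n$, because the two maximal isotropic subspaces containing $F^{n-1}$ lie in opposite families, so for $E$ in the fixed family $\cS_n$ the conditions attached to $n$ and to $n+1$ coincide. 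Hence the resulting $\lambda$ is sent back to $\sfa,\ttJ$ by the forward map, and the two constructions are mutually inverse.

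The step I expect to be the main obstacle is the bookkeeping in the second paragraph: reconciling the abstract filtration \eqref{E:XwCa}, written in the basis adapted to $\cS_n = D_n\cdot\langle e_1,\ldots,e_n\rangle$, with the standard isotropic flag $F^\sbullet$ used for partitions, and confirming that the shift of the index relation from $\sfa+1$ to $\sfa+1+\a_{n-1}(Z_w)$ is exactly compensated by the singleton-top-block versus parity dichotomy, so that every one of the four cases of \eqref{E:aDn} comes out with the correct value.
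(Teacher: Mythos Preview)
Your proposal is correct and follows essentially the same approach as the paper: the paper presents Lemma~\ref{L:Dpart} as a summary of the preceding discussion (``The discussion above yields the following''), which, like your argument, reduces \eqref{E:Y1} to the redundancy-free form via the modified block decomposition, matches against \eqref{E:XwCa}--\eqref{E:XwCb} to read off $\ttJ$ and the index relation $\ell+m=\sfa+1+\a_{n-1}(Z_w)$, and then runs the $\lambda_1=1$ versus $\lambda_1>1$ case split exactly as in the Lagrangian case. Your write-up is in fact more detailed than the paper's --- in particular your isotropy justification for treating $n-1,n+1$ (resp.\ $n,n+2$) as consecutive, and your parity check ruling out $\{\lambda_{n-1},\lambda_n\}\in\{\{n-1,n+1\},\{n,n+2\}\}$, are left implicit in the paper.
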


\begin{small}
\begin{table}
\caption[Spinor variety $\cS_6$]{Schubert varieties of $\cS_6$, Schur rigid classes indicated by $\ast$.}
\label{t:D6P6}
\renewcommand{\arraystretch}{1.2}
\begin{tabular}{|c|c|c||c|c|c|}
\hline
   $\lambda$ & $\sfa:\ttJ$ & $\sfr$ & 
   $\lambda$ & $\sfa:\ttJ$ & $\sfr$ \\ \hline \hline
      ${}^\mathbf{\ast}(1,2,3,4,5,6)$ & &  &
      $(1,2,3,4,7,8)$ & $0:4$ & $0$ \\ \hline
      $(1,2,3,5,7,9)$ & $0:3,5$ & $1$ & 
      $(1,2,4,5,7,10)$ & $0:2,5$ & $1$ \\ \hline
      ${}^\mathbf{\ast}(1,2,3,6,8,9)$ & $0:3$ & $0$ & 
      $(1,3,4,5,7,11)$ & $0:1,5$ & $1$ \\ \hline
      $(1,2,4,6,8,10)$ & $1:2,3,5$ & $1$ & 
      ${}^\mathbf{\ast}(2,3,4,5,7,12)$ & $0:5$ & $1$ \\ \hline
      $(1,3,4,6,8,11)$ & $1:1,3,5$ & $1$ & 
      $(1,2,5,6,9,10)$ & $1:2,4$ & $1$ \\ \hline
      $(2,3,4,6,8,12)$ & $1:3,5$ & $1$ & 
      $(1,3,5,6,9,11)$ & $2:1,2,4,5$ & $2$ \\ \hline
      ${}^\mathbf{\ast}(1,2,7,8,9,10)$ & $0:2$ & $0$ & 
      $(2,3,5,6,9,12)$ & $2:2,4,5$ & $2$ \\ \hline
      ${}^\mathbf{\ast}(1,4,5,6,10,11)$ & $1:1,4$ & $1$ & 
      $(1,3,7,8,9,11)$ & $1:1,2,5$ & $1$ \\ \hline
      $(2,4,5,6,10,12)$ & $2:1,4,5$ & $2$ & 
      ${}^\mathbf{\ast}(2,3,7,8,9,12)$ & $1:2,5$ & $1$ \\ \hline
      $(1,4,7,8,10,11)$ & $2:1,2,4$ & $1$ & 
      ${}^\mathbf{\ast}(3,4,5,6,11,12)$ & $1:4$ & $1$ \\ \hline
      $(2,4,7,8,10,12)$ & $3:1,2,4,5$ & $2$ & 
      $(1,5,7,9,10,11)$ & $1:1,3$ & $1$ \\ \hline
      $(3,4,7,8,11,12)$ & $2:2,4$ & $1$ & 
      $(2,5,7,9,10,12)$ & $2:1,3,5$ & $2$ \\ \hline
      ${}^\mathbf{\ast}(1,6,8,9,10,11)$ & $0:1$ & $0$ & 
      $(3,5,7,9,11,12)$ & $3:1,3,4$ & $2$ \\ \hline
      $(2,6,8,9,10,12)$ & $1:1,5$ & $1$ & 
      ${}^\mathbf{\ast}(4,5,7,10,11,12)$ & $1:3$ & $1$ \\ \hline
      $(3,6,8,9,11,12)$ & $2:1,4$ & $1$ & 
      $(4,6,8,10,11,12)$ & $2:1,3$ & $1$ \\ \hline
      $(5,6,9,10,11,12)$ & $1:2$ & $1$ & 
      ${}^\mathbf{\ast}(7,8,9,10,11,12)$ & & \\ 
\hline
\end{tabular}
\end{table}
\end{small}

Since $\ttj_1 = n-1$ (equivalently, $\a_{n-1}(Z_w) = 1$) if and only if $\lambda_n > \lambda_{n-1} + 1$, \eqref{E:aDn} is equivalent to
\begin{equation} \nonumber
  \sfa(\lambda) + \a_{n-1}(Z_w) \ = \ \left\{ \begin{array}{ll}
  \sfp - 1 & \hbox{ if } \lambda_1 = 1 \,,\\
  \sfp  & \hbox{ if } \lambda_1 > 1 \,.
  \end{array}\right.
\end{equation}
Similarly, \eqref{E:r} may be expressed as
\begin{equation} \label{E:r2}  
  \sfr \ = \ 
  \left\{ \renewcommand{\arraystretch}{1.2} \begin{array}{ll}
    \lfloor \sfp/2 \rfloor & \hbox{ if } \lambda_1 = 1 \,,\\
    \lceil \sfp/2 \rceil & \hbox{ if } \lambda_1 > 1 \,.
  \end{array} \right.
\end{equation}

%------------------------------------------------------------------------------
%------------------------------------------------------------------------------
\bibliographystyle{plain}
\def\cprime{$'$}

%------------------------------------------------------------------------------
\end{document}